\documentclass[11pt, letterpaper]{article}

\usepackage{fullpage}
\usepackage{amsmath,amssymb,amsfonts,amsthm,mathtools}
\usepackage[shortlabels]{enumitem}

\usepackage{tikz}
\usetikzlibrary{arrows.meta}
\usepackage{microtype}

\usepackage{thmtools}
\usepackage[hidelinks]{hyperref}
\usepackage[nameinlink,capitalise,noabbrev,nosort]{cleveref}
\crefformat{equation}{(#2#1#3)}
\crefmultiformat{equation}
  {(#2#1#3)}
  { and~(#2#1#3)}
  {,~(#2#1#3)}
  {, and~(#2#1#3)}
\crefrangeformat{equation}
  {(#3#1#4)--(#5#2#6)}

\theoremstyle{definition}
\newtheorem{definition}{Definition}
\newtheorem{observation}[definition]{Observation}

\theoremstyle{plain}
\newtheorem{theorem}[definition]{Theorem}
\newtheorem{proposition}[definition]{Proposition}
\newtheorem{lemma}[definition]{Lemma}
\newtheorem{claim}[definition]{Claim}

\crefname{theorem}{Theorem}{Theorems}
\Crefname{theorem}{Theorem}{Theorems}
\crefname{lemma}{Lemma}{Lemmas}
\Crefname{lemma}{Lemma}{Lemmas}
\crefname{proposition}{Proposition}{Propositions}
\Crefname{proposition}{Proposition}{Propositions}
\crefname{corollary}{Corollary}{Corollaries}
\Crefname{corollary}{Corollary}{Corollaries}
\crefname{definition}{Definition}{Definitions}
\Crefname{definition}{Definition}{Definitions}
\crefname{observation}{Observation}{Observations}
\crefname{appendix}{Appendix}{Appendices}
\Crefname{appendix}{Appendix}{Appendices}
\crefname{claim}{Claim}{Claims}
\Crefname{claim}{Claim}{Claims}

\def \sm {\setminus}
\def \ce {\coloneqq}

\renewcommand{\le}{\leqslant}
\renewcommand{\ge}{\geqslant}

\def \eps {\varepsilon}
\def \es {\varnothing}
\renewcommand \b[2] {\binom{#1}{#2}}

\newcommand{\Rs}{\mathbb R}
\def \nd {[n]^d}
\def \mnd {M(\nd)}
\def \mn {M([n])}
\newcommand{\SF}{\operatorname{SF}}
\def \sfnd {\SF (\nd)}
\newcommand{\supp}{\operatorname{supp}}

\newcommand{\T}{\mathcal T}
\renewcommand{\P}{\mathcal P}

\renewcommand{\o}{\omega{}}
\newcommand{\Zs}{\mathbb Z}

\renewcommand{\S}{\mathcal{S}}

\renewcommand{\Pr}{\mathbb P}
\renewcommand{\l}{\ell}
\newcommand{\G}{\mathcal G}
\newcommand{\B}{\mathcal B}
\newcommand{\R}{\mathcal R}
\newcommand{\U}{\mathcal U}

\newcommand{\Z}{\mathcal Z}

\newcommand{\F}{\mathcal F}

\title{The number of sum-free subsets of lattice cubes}
\author{Haoran Luo\thanks{Department of Mathematics, Statistics and Computer Science, University of Illinois Chicago, Chicago, Illinois, USA. Email: \texttt{haoranl8@uic.edu}. The author was partially supported by an AMS-Simons Travel Grant.}}
\date{}

\begin{document}
\maketitle

\begin{abstract}
A subset of the $d$-dimensional lattice cube $[n]^d$ is sum-free if it contains no solution to the equation $x+y=z$. We study the total number of such subsets. For $d=1$, Cameron and Erd\H{o}s conjectured that the number of sum-free subsets of $[n]$ is $O(2^{n/2})$, and this was proved independently by Green and Sapozhenko. A recent work by Ghosal solved the case $d = 2$. In this paper, we consider all remaining dimensions and prove that for every fixed integer $d \ge 3$, the number of sum-free subsets of $[n]^d$ is $2^{M([n]^d) + O_d(n^{d-1})}$, where $M([n]^d)$ is the maximum possible size of a sum-free subset of $[n]^d$. This verifies a conjecture of Elsholtz and Rackham. Our proof combines the dual weights constructed by Keevash and Lim in their work for $M([n]^d)$, a one-dimensional counting estimate due to Ghosal, a bipartite swapping lemma of Zhao, and a strong fractional entropy inequality of Madiman and Tetali, and it avoids the use of the container lemma or deriving a stability theorem first.
\end{abstract}

\section{Introduction}
Write $[a,b] \ce \{x \in \Zs: a \le x \le b\}$ and $[n] \ce [1,n]$. Let $\nd$ be the $d$-dimensional lattice cube. A set $S \subseteq \nd$ is \emph{sum-free} if there are no $x,y,z \in S$ such that $x+y=z$, where the addition is coordinate-wise and $x, y$ are allowed to be equal. The study of sum-free subsets in various objects has a long history and has attracted considerable attention. We refer the reader to~\cite{cameron1999notes,tao2017sumfree,bedert2025large} and the references within them for more related results.

Let $\mnd$ be the maximum possible size of a sum-free subset of $\nd$. For $d = 1$, it is folklore that $\mn = \lceil n/2 \rceil$, which is achieved by the set of odd integers in $[n]$ and by the interval $[\lfloor n/2\rfloor+1,n]$. For $d \ge 2$, the problem of determining $\mnd$ was asked by Aydinian (for $d= 2$, see~\cite{cameron2005research}) and Cameron~\cite{cameron2002sumfree}; it is also in the list of 100 open problems by Green~\cite{green2018open}. The case $d=2$ was solved by Elsholtz and Rackham~\cite{elsholtz2017maximal}. Later, Lepsveridze and Sun~\cite{lepsveridze2026size} settled the cases for $d = 3,4$, and building on their method, Keevash and Lim~\cite{keevash2026largest} determined the maximum possible density of a sum-free set for all $d$. More precisely, choose $S$ to be the set of the form
\begin{equation} \label{equ::stripe}
\{v \in \nd : u \le v_1 + \cdots + v_d < 2u\}
\end{equation}
with maximum size over all $u$. The results mentioned above showed that, for every dimension $d \ge 2$, we have
\[
\mnd=|S|+O_d(n^{d-1}).
\]
For example, for $d=2$, one can take $u = \lfloor 4n/5 \rfloor$ and this stripe has asymptotic size $0.6n^2$. We note that several stability results were also obtained. For $d=1$, the results of Freiman~\cite{freiman1992structure}, Deshouillers, Freiman, S{\'o}s, and Temkin~\cite{deshouillers1999structure}, and Tran~\cite{tran2018structure} imply that every sum-free set $S \subseteq [n]$ with $|S|=(1/2-o(1))n$ has symmetric difference $o(n)$ from either the set of odd integers in $[n]$ or the interval $[\lfloor n/2\rfloor+1,n]$. For $d=2$, Liu, Wang, Wilkes, and Yang~\cite{liu2023shape} proved that if a sum-free subset of $[n]^2$ has size close to $M([n]^2)$, then itself must be close to the optimal stripe in \cref{equ::stripe}. For $d \ge 3$, the corresponding stability problem remains open.

Having settled the asymptotic problem for $\mnd$, the next natural problem is to upgrade these results to a counting result. Let $\sfnd$ be the total number of sum-free subsets of $\nd$. Note that every subset of a sum-free set is still sum-free, so trivially
\[
\sfnd \ge 2^{\mnd}.
\]
A very natural question is how far this bound is from the truth. For $d=1$, a well-known conjecture by Cameron and Erd\H{o}s~\cite{cameron1990number} states that the number of sum-free subsets of $[n]$ is $O(2^{n/2})$. After a series of works by Cameron and Erd\H{o}s~\cite{cameron1990number}, Calkin~\cite{calkin1990number}, Alon~\cite{alon1991independent}, Erd\H{o}s and Granville (unpublished, see~\cite{green2004cameron}), Freiman~\cite{freiman1992structure}, and Omel'yanov and Sapozhenko~\cite{omelyanov2002number}, Green~\cite{green2004cameron} and Sapozhenko~\cite{sapozhenko2008cameron} finally proved the full Cameron--Erd\H{o}s conjecture independently. In modern terminology, both proofs construct suitable families of containers, Green using Fourier analysis and Sapozhenko using a purely combinatorial method, and then reduce the count to subsets of the odd integers and sum-free subsets of $[\lceil(n+1)/3\rceil,n]$, the latter having already been solved by Cameron and Erd\H{o}s~\cite{cameron1990number}.

For general $d$, Elsholtz and Rackham~\cite{elsholtz2017maximal} made the analogous conjecture\footnote{Strictly speaking, Elsholtz and Rackham stated their conjecture using slightly different notation. Here we use an equivalent formulation in terms of $\mnd$, given the result of $\mnd$ by Keevash and Lim~\cite{keevash2026largest}.} that, for every $d$, $\sfnd=2^{\mnd+O_d(n^{d-1})}$. Ghosal~\cite{ghosal2025number} recently proved the conjecture for $d = 2$ and obtained a weaker estimate $\sfnd=2^{\mnd+o(n^d)}$ for every $d \ge 3$ by using Green's arithmetic removal lemma~\cite{green2005regularity} and the hypergraph container method~\cite{balogh2015independent,saxton2015hypergraph}; for $d = 2$, Ghosal additionally used the stability theorem of Liu, Wang, Wilkes, and Yang~\cite{liu2023shape} mentioned above.

In this paper, we settle all the remaining dimensions.

\begin{theorem} \label{thm::main}
For every integer $d \ge 3$, we have
\[
\sfnd = 2^{\mnd + O_d (n^{d-1})}.
\]
\end{theorem}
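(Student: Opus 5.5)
The lower bound $\sfnd \ge 2^{\mnd}$ is immediate, so the task is the upper bound $\sfnd \le 2^{\mnd + O_d(n^{d-1})}$. The abstract suggests the route: use the Keevash--Lim dual weights to split the counting problem into one-dimensional pieces, and then combine the pieces with an entropy inequality. Concretely, Keevash and Lim certify $\mnd \le |S| + O_d(n^{d-1})$ by a fractional covering. They give a family $\mathcal{L}$ of one-dimensional structures (arithmetic progressions or short lines $L$ inside $\nd$) with nonnegative weights $w_L$ such that every point $v\in\nd$ is covered with total weight $\sum_{L\ni v} w_L \ge 1$ (up to boundary error). Each line $L$ is an affine copy of a one-dimensional configuration whose sum-free number is controlled, and $\sum_L w_L\, m(L) = \mnd + O_d(n^{d-1})$, where $m(L)$ is the maximum size of a sum-free-type subset of $L$ in the relevant sense. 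I would first restate this duality precisely, discarding the $O(n^{d-1})$ boundary points that are insufficiently covered; these cost only a factor $2^{O(n^{d-1})}$.

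Next, take $A$ uniformly random among all sum-free subsets of $\nd$, so that $\log_2 \sfnd = H(A)$. The fractional covering gives, via the strong fractional subadditivity of entropy (Madiman--Tetali, the version for fractional covers that handles weights rather than integer multiplicities), the bound
\[
H(A) \le \sum_{L} w_L\, H(A\cap L \mid A \cap \mathrm{Past}(L))
\]
or at least $H(A)\le \sum_L w_L H(A\cap L)$. Each projection $A\cap L$ is a sum-free-type subset of $L$, so $H(A\cap L)\le \log_2(\#\text{admissible subsets of }L)$. Ghosal's one-dimensional counting estimate then bounds this number by $2^{m(L)+O(1)}$. This is the one-dimensional Cameron--Erdős-type bound, sharp up to a constant factor rather than $2^{o(n)}$, and that sharpness is exactly what keeps the total error at $O(n^{d-1})$: there are about $\sum_L w_L = O(n^{d-1})$ units of line-weight, each contributing $O(1)$ error. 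Summing gives $H(A)\le \mnd + O_d(n^{d-1})$.

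I expect the main obstacle to be that the lines in the Keevash--Lim certificate are not closed under the sum-free constraint in a way that makes $A\cap L$ literally sum-free in one dimension. Their weights pair points across different level sets, in the style of the bipartite "$x+y=z$" dual constraints, rather than along genuine one-dimensional copies of $[m]$. So the local objects are really sets of the form: pick $X\subseteq P$, $Y\subseteq Q$ with no $x+y=z$ across the parts, a bipartite independent-set structure. Here I would use Zhao's bipartite swapping lemma. The number of independent sets in a graph $G$ is at most the corresponding quantity for $G\times K_2$ raised to the power $1/2$, which reduces the count of independent sets in a non-bipartite "sum graph" restricted to a window to a bipartite count. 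That bipartite count can then be bounded by the one-dimensional estimate on each piece. Arranging the Keevash--Lim weights so that each local piece is simultaneously (i) a set covered by Madiman--Tetali with the right fractional weights and (ii) a graph to which Zhao's swap and then Ghosal's estimate apply with only $O(1)$ loss is the delicate part. I would first verify it for $d=3$ with the explicit Lepsveridze--Sun weights before handling general $d$.
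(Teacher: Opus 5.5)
There is a genuine gap at the step you treat as routine: the claim that each local piece admits at most $2^{m(L)+O(1)}$ configurations. In the Keevash--Lim certificate the local pieces are triples of vertical fibers over base points with $x+y=z$ in $[0,n]^{d-1}$. The induced constraint on the fiber sets $X,Y,Z\subseteq[0,n]$ is $(X+Y)\cap Z=\varnothing$, and $m(L)=2(n+1)$. But the number of such triples is of order $n^2 2^{2n}$. Write $a=\min X$, $b=\min Y$, $c=\max Z$. If $c<a+b$, the constraint is vacuous. So already the $\Theta(n^2)$ pairs $(a,b)$ with $c=a+b-1$ each contribute $2^{n-a}2^{n-b}2^{c}=2^{2n-1}$ triples. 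Bounding $H(A\cap L)$ by the logarithm of the local count therefore overshoots $m(L)$ by about $2\log_2 n$. Since the total weight of the certificate is $\Theta(n^{d-1})$, your Shearer step yields only $\log_2\operatorname{SF}([n]^d)\le M([n]^d)+O_d(n^{d-1}\log n)$. Ghosal's estimate cannot supply a $2^{m(L)+O(1)}$ bound here; the Cameron--Erd\H{o}s sharpness you allude to concerns sum-free subsets of $[n]$, not this tripartite count. The non-bipartite structure you flag as the main obstacle is not where the real difficulty lies.

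The missing idea is a way to pay for these $\log n$ bits globally rather than locally. The paper conditions on a height profile $\mathcal P$: fiber minima on $A\cup B\cup C^-$ and fiber maxima on $C^+\cup D\cup E$. With $(a,b,c)$ fixed, the local count drops to $2^{2n-\alpha|c-a-b|+O(1)}$. This is where Ghosal's bound $g(t)\le K(2-\eta)^t$ is used, to get exponential decay in $c-a-b>0$. Madiman--Tetali then gives $N(\mathcal P)\le 2^{M(Q)-\alpha_0\delta(\mathcal P)+O_d(n^{d-1})}$ for a weighted defect $\delta(\mathcal P)$.

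The second, essential half is showing that only $2^{\varepsilon R+O_{d,\varepsilon}(n^{d-1})}$ profiles satisfy $\delta(\mathcal P)\le R$, instead of the trivial $(n+2)^{|V|}=2^{\Theta(n^{d-1}\log n)}$. This requires the defect to be taken also over bounded translates of $\omega_1,\omega_2$, which remain near-optimal certificates. Then four shifted defects cancel to a second difference $\Delta_{e_1}^2\mathcal P$. The lower profile is reconstructed along lines parallel to $e_1$, and the upper profile is pinned pointwise by the already-fixed lower values.

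Zhao's lemma plays only an auxiliary role. It handles the repeated-source blocks $(v,v,2v)$, where, combined with the ordered Madiman--Tetali inequality, it reduces $(X+X)\cap Z=\varnothing$ to the distinct-source estimate. Without the profile decomposition and the profile count, your outline cannot reach the $O_d(n^{d-1})$ error term.
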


Our proof associates every sum-free subset with a \emph{profile} (defined in \cref{sec::profile}) and then reduces \cref{thm::main} to two counting propositions. The first proposition gives an upper bound on the number of sum-free sets with a given profile. The second proposition bounds the number of profiles. Combining the two propositions gives \cref{thm::main}. We give more details in \cref{sec::profile} after we introduce the necessary notation. We remark that, different from previous works, our arguments count the sum-free sets directly and avoid the use of a container lemma and the need to derive a stability theorem first.

We remark that the counting problem for sum-free subsets has also been studied in other settings, see for example~\cite{alon1991independent, green2005sumfree, alon2014counting, alon2014refinement, balogh2015number}.

The rest of this paper is organized as follows. In \cref{sec::profile}, we first recall the fiber linear program and the dual weights used by Keevash and Lim in their work on $\mnd$. Then we define the profiles of sum-free subsets and defects of profiles, and reduce \cref{thm::main} to two counting propositions. In \cref{sec::keevashlim}, we record the precise properties of the Keevash--Lim weights needed in our proof. In \cref{sec::fixedProfileProof,sec::profileCountProof}, we prove these two counting propositions, the fixed-profile estimate and the profile-count estimate, respectively. In \cref{sec::concluding}, we give some concluding remarks, including other methods we tried for this work and some potential open problems.

\section{Fibers and two counting propositions} \label{sec::profile}
In this section, we introduce the objects needed for the proof of \cref{thm::main} and reduce it to two counting propositions. We first introduce fibers and the five regions appearing in the Keevash--Lim weights. Then, we define the height profiles and their weighted defects, and state the two propositions that lead to the main theorem.

Hereinafter, for real numbers $a$ and $b$, we write $[a,b]_{\Rs} \ce \{x \in \Rs : a \le x \le b\}$. Similarly, $(a,b]_{\Rs}$, $[a,b)_{\Rs}$, and $(a,b)_{\Rs}$ denote the corresponding real intervals.

\subsection{Fibers and regions} \label{subsec::fiberAndRegions}
We write $Q \ce [0,n]^d$. Let $M(Q)$ and $\SF(Q)$ denote the maximum possible size and the total number of sum-free subsets of $Q$, respectively. Instead of working directly on $\nd$, we first prove the corresponding counting estimate for $Q$. The reason is that the linear program and the dual weights of Keevash and Lim~\cite{keevash2026largest} are constructed for $Q$. At the end of this section, we will transfer the result back to $\nd$, where we simply use the fact that $Q \sm \nd$ has $O_d(n^{d-1})$ points.

Let
\[
V \ce [0,n]^{d-1}.
\]
We view every element of $V$ as the first $d-1$ coordinates in $Q$. For every $v \in V$, define the vertical \emph{fiber} above $v$ by\footnote{For a vector $v$ and a number $t$, we write $(v,t)$ for their concatenation.} $F_v \ce \{(v,t) : t \in [0,n]\}$. For a set $S \subseteq Q$, let $S_v \ce \{t \in [0,n] : (v,t) \in S\}$. Hence, $S \cap F_v=\{(v,t) : t \in S_v\}$.

The connection between $V$ and sum-free subsets is based on the following simple observation of Lepsveridze and Sun; see Lemma~2.3 in~\cite{lepsveridze2026size}.

\begin{observation} \label{obs::threefiber}
For every sum-free set $S \subseteq Q$ and every $x,y,z \in V$ with $x+y=z$, we have
\[
|S_x|+|S_y|+|S_z| \le 2(n+1).
\]
\end{observation}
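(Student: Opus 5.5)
We need to prove: for sum-free $S\subseteq Q$ and $x,y,z\in V$ with $x+y=z$, we have $|S_x|+|S_y|+|S_z|\le 2(n+1)$.

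The plan is to reduce to a one-dimensional statement about sets of integers. Write $A\ce S_x$, $B\ce S_y$, $C\ce S_z$, all subsets of $[0,n]$. Since $x+y=z$ in $V$, the condition that $S$ is sum-free gives: for $a\in A$ and $b\in B$ we never have $a+b\in C$, because otherwise $(x,a)+(y,b)=(z,a+b)$ would be a solution in $S$. So $(A+B)\cap C=\es$. This is the only use of sum-freeness. The coincidences $x=y$ or $x=z$ (possible when $y=0$) are harmless, since we only use this one consequence.

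Next we bound $|A|+|B|+|C|$ using only $(A+B)\cap C=\es$ together with $C\subseteq[0,n]$. If $A$ or $B$ is empty, then trivially $|A|+|B|+|C|\le 2(n+1)$, since each set has at most $n+1$ elements and one of them contributes nothing. Otherwise, let $a_0\ce\min A$ and $b_0\ce\min B$.

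The key step is a shift-injection argument. Consider the sets $a_0+B$ and $b_0+A$. Both are contained in $A+B$, hence disjoint from $C$. Their union has size at least $|A|+|B|-1$: the two sets are translates of sets with minima $a_0+b_0$, so they overlap in at least the common point $a_0+b_0$, and the Cauchy--Davenport-type bound $|A+B|\ge|A|+|B|-1$ over $\Zs$ is exactly this statement. So $|A+B|\ge |A|+|B|-1$.

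The remaining obstacle is that $A+B$ lives in $[0,2n]$, while $C\subseteq[0,n]$, so disjointness alone does not immediately bound $|C|$. Instead, I would restrict to the window $[0,n]$. The elements $a_0+b$ with $b\in B$ and $a_0+b\le n$, together with the elements $b_0+a$ with $a\in A$ and $b_0+a\le n$, all lie in $[0,n]\sm C$.

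To control these counts cleanly, I would use a direct argument via $a_0$ alone. The map $b\mapsto a_0+b$ sends $B\cap[0,n-a_0]$ injectively into $[0,n]\sm C$. The elements of $B$ in $(n-a_0,n]$ number at most $a_0$. Also $A\subseteq[a_0,n]$, so $|A|\le n+1-a_0$. Hence
\[
|B|+|C|\le |B\cap(n-a_0,n]|+(n+1)\le a_0+n+1,
\]
and adding $|A|\le n+1-a_0$ gives $|A|+|B|+|C|\le 2(n+1)$.

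Once I have this, the sumset bound from the previous step is unnecessary: the whole proof is the injection $b\mapsto a_0+b$ plus this counting. The case where $A$ is empty was already handled. The only point needing care is confirming that $a_0+b\in C$ is indeed forbidden, including the degenerate case where $(x,a_0)$ and $(y,b)$ are the same point. That case is fine because the problem definition allows $x=y$ in a solution of $x+y=z$.
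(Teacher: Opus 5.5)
Your proof is correct, but it pivots on a different element than the paper does. The paper fixes the \emph{target} extreme $c=\max S_z$ and uses the reflection $t\mapsto c-t$. The pairs $(x,t),(y,c-t)$ for $t\in[0,c]$ give $|S_x\cap[0,c]|+|S_y\cap[0,c]|\le c+1$. The parts of the two sources above $c$ then contribute at most $n-c$ each, and $|S_z|\le c+1$.

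You instead fix a \emph{source} extreme $a_0=\min S_x$ and use the translation $b\mapsto a_0+b$. This injects $S_y\cap[0,n-a_0]$ into $[0,n]\setminus S_z$, the overflow $S_y\cap(n-a_0,n]$ has at most $a_0$ elements, and $|S_x|\le n+1-a_0$.

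Both are one-pivot disjointness arguments of the same length, and neither gives more than the other. The paper's version treats the two sources symmetrically and only needs $S_z=\es$ as a separate case. Yours is asymmetric in $x,y$ and only needs $S_x=\es$ separately. Your treatment of the coincidences $x=y$ or $x=z$ (when $y=0$) is fine, since you only use $(S_x+S_y)\cap S_z=\es$, and the paper's definition of sum-free imposes no distinctness.

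The detour you discard has an incorrect justification. The union $(\min S_x+S_y)\cup(\min S_y+S_x)$ need not have $|S_x|+|S_y|-1$ elements; for $S_x=S_y$ the two translates coincide. The standard proof of $|S_x+S_y|\ge|S_x|+|S_y|-1$ over $\Zs$ uses $\min S_x+S_y$ and $S_x+\max S_y$, which meet in exactly one point. Since you explicitly drop this step, it does not affect your proof.
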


\begin{proof}
The result is immediate if $S_z=\es$. Otherwise, let $c \ce \max S_z$, and hence $|S_z| \le c+1$. For every integer $t \in [0,c]$, the points $(x,t)$ and $(y,c-t)$ cannot both belong to $S$, since their sum is $(z,c) \in S$. Summing over $t$ gives
\[
|S_x \cap [0,c]|+|S_y \cap [0,c]| \le c+1.
\]
Then,
\begin{align*}
    |S_x|+|S_y|+|S_z| & = |S_x \cap [0,c]|+ |S_y \cap [0,c]| +
    |S_x \sm [0,c]|+|S_y \sm [0,c]| + |S_z| \\
    & \le (c+1) + (n+1 - (c+1)) + (n+1 - (c+1)) + (c+1) \le 2(n+1). \qedhere
\end{align*}
\end{proof}

For sets $X,Y,Z \subseteq V$, let $\T(X,Y,Z) \ce \{(x,y,z) \in X \times Y \times Z : x+y=z\}$; we write $\T$ for $\T(V,V,V)$. For every $(x,y,z) \in \T$, we say that $x$ and $y$ are the \emph{sources} and $z$ is the \emph{target}; we also say that $F_x$ and $F_y$ are the source fibers and $F_z$ is the target fiber. Whenever a function defined on $\T$ is summed over triples $(x,y,z)\in V^3$, we extend it by zero to all of $V^3$. By \cref{obs::threefiber}, on every triple in $\T$, at least one third of the total capacity of the three fibers is missing. To make the best use of this, the following linear program is considered. For a sum-free set $S \subseteq Q$, let $a_v \ce |S_v|$ for every $v \in V$. Then, $(a_v)_{v \in V}$ is feasible for the following linear program, whose optimum is an upper bound for $|S|$:
\[
\begin{array}{ll}
 \text{maximize}&\displaystyle\sum_{v \in V}a_v,\\[1mm]
 \text{subject to}&a_x+a_y+a_z \le 2(n+1)
       \quad((x,y,z) \in \T),\\
 &0 \le a_v \le n+1\quad(v \in V).
\end{array}
\tag{Primal}\label{equ::primalLP}
\]

In order to give an upper bound for the optimum of \cref{equ::primalLP}, we can consider its dual program. For every function $\o:\T \to [0,\infty)_{\Rs}$, let
\begin{equation} \label{equ::defW}
W_\o(v) \ce \sum_{\substack{y,z \in V\\v+y=z}}\o(v,y,z) +\sum_{\substack{x,z \in V\\x+v=z}}\o(x,v,z) +\sum_{\substack{x,y \in V\\x+y=v}}\o(x,y,v).
\end{equation}
Note that the coordinates are counted with multiplicity; for example, $(v,v,2v)$ contributes twice to $W_\o(v)$, while $(0,0,0)$ contributes three times to $W_\o(0)$. The dual program of \cref{equ::primalLP} is
\[
\begin{array}{ll}
 \text{minimize}&\displaystyle
   2(n+1)\sum_{(x,y,z) \in \T}\o(x,y,z)
   +(n+1)\sum_{v \in V}\beta(v),\\[1mm]
 \text{subject to}&\displaystyle
   W_\o(v)+\beta(v) \ge 1\quad(v \in V),\\
 &\o(x,y,z) \ge 0\quad((x,y,z) \in \T),\\
 &\beta(v) \ge 0\quad(v \in V).
\end{array}
\tag{Dual}\label{equ::dualLP}
\]
Every feasible solution to \cref{equ::dualLP} gives an upper bound on $|S|$. Lepsveridze and Sun~\cite{lepsveridze2026size} used this linear program to settle the cases $d=3,4$. Building on their method, Keevash and Lim~\cite{keevash2026largest} constructed dual solutions with the correct asymptotic value for every fixed $d \ge 3$. We introduce the five regions used in their constructions here and record the precise properties needed in our proof in \cref{sec::keevashlim}.

For every vector $v=(v_1,\ldots,v_j)$, let $h(v) \ce v_1+\ldots+v_j$. Let $u_d^*$ be the unique value of $u \ge 0$ maximizing the volume of $\{v \in [0,1]_{\Rs}^d : u \le h(v)<2u\}$, and let $m \ce \lfloor u_d^*n\rfloor$. Keevash and Lim showed in Sections~1 and~3 of~\cite{keevash2026largest} that $d/3<u_d^*<d/2$ for every $d \ge 2$. For example, $u_3^* = (15-\sqrt{15}) / 10 \approx 1.11$ and $u_4^* \approx 1.45$. For all sufficiently large $n$, the set $V$ is partitioned into the following five regions:
\begin{align*}
A \ce \{v : 0 \le h&(v) < m-n\}, \,\,
B \ce \{v : m-n \le h(v) \le m\}, \,\,
C \ce \{v : m < h(v) < 2m-n\},\\
D& \ce \{v : 2m-n \le h(v) \le 2m\}, \quad
E \ce \{v : 2m < h(v) \le (d-1)n\}.
\end{align*}
We note that when $d=3$, the region $E$ is empty. We further divide $C$ in the middle:\footnote{Keevash and Lim~\cite{keevash2026largest} also divide $C$ into two regions in their proof. We remark that our division $C = C^- \cup C^+$ is different.}
\[
C^- \ce \left\{v \in C : h(v) \le \left\lfloor {(3m-n-1)}/{2}\right\rfloor\right\}, \qquad C^+ \ce C \sm C^-.
\]
See \cref{fig::baseHeightRegions} for a schematic picture of the regions for $d=3$.

\begin{figure}[ht]
\centering
\begin{tikzpicture}[
  x=1.15cm,
  y=1.15cm,
  line cap=round,
  line join=round,
  every node/.style={font=\small},
  boundary/.style={black!55}
]
  \def\ab{0.68}
  \def\cmid{1.02}
  \def\cd{1.35}

  \path[fill=black!12]
    (0,0)--(\ab,0)--(0,\ab)--cycle;
  \path[fill=white]
    (\ab,0)--(6,0)--(6,\ab)--(\ab,6)--(0,6)--(0,\ab)--cycle;
  \path[fill=black!5]
    (\ab,6)--(\cmid,6)--(6,\cmid)--(6,\ab)--cycle;
  \path[fill=black!15]
    (\cmid,6)--(\cd,6)--(6,\cd)--(6,\cmid)--cycle;
  \path[fill=black!8]
    (\cd,6)--(6,6)--(6,\cd)--cycle;

  \draw[boundary] (0,\ab)--(\ab,0);
  \draw[boundary] (\ab,6)--(6,\ab);
  \draw[boundary] (\cmid,6)--(6,\cmid);
  \draw[boundary] (\cd,6)--(6,\cd);
  \draw[semithick] (0,0) rectangle (6,6);

  \draw[-{Latex[length=2mm]}] (0,0)--(6.45,0)
    node[right] {$x_1$};
  \draw[-{Latex[length=2mm]}] (0,0)--(0,6.45)
    node[above] {$x_2$};
  \node[below left=2pt] at (0,0) {$0$};
  \node[below=2pt] at (6,0) {$n$};
  \node[left=2pt] at (0,6) {$n$};

  \node at (0.20,0.20) {$A$};
  \node at (2.85,2.10) {$B$};
  \node at (3.50,3.50) {$C$};
  \node at (4.85,4.85) {$D$};

  \node[anchor=west] (cplus) at (6.78,1.55) {$C^+$};
  \draw[-{Latex[length=1.7mm]},thin]
    (cplus.west)--(5.93,1.28);
  \node[anchor=west] (cminus) at (6.78,0.72) {$C^-$};
  \draw[-{Latex[length=1.7mm]},thin]
    (cminus.west)--(5.93,0.93);

  \node[anchor=west,align=left] at (6.70,5.20)
    {$E=\es$\\[-1mm]\scriptsize when $d=3$};
\end{tikzpicture}
\caption{The regions in the two-dimensional base $V$ when $d=3$.}
\label{fig::baseHeightRegions}
\end{figure}

Keevash and Lim~\cite{keevash2026largest} construct a nonnegative weight function $\o:\T \to [0,\infty)_{\Rs}$ of the form $\o=\o_1+\o_2+\o_3$, where $\o_1,\o_2,\o_3$ are supported on $\T(B,B,D)$, $\T(A,C,C)$, and $\T(C,C,E)$, respectively. Let $W_\o(v)$ be the function defined as in \cref{equ::defW} and $\beta(v) \ce \max(1-W_\o(v), 0)$. Then, $(\o,\beta)$ is a feasible solution to the dual program in \cref{equ::dualLP} and gives the required asymptotic upper bound $M(Q) + O_d (n^{d-1})$ for the primal program.

For our counting argument, we need slightly more information from their construction. The reason is that the region $C$ plays different roles in $\o_2$ and $\o_3$, and this requires us to use different information about $C$. Therefore, we use exactly the same construction but use the division $C=C^- \cup C^+$ above and refine the supports accordingly. We will also use bounded translations of the first two weight components. These refinements and the precise properties needed in our proof are given in \cref{sec::keevashlim}.

\subsection{Profile and defect} \label{subsec::profileanddefect}

\begin{definition}[Height profile] \label{def::heightProfile}
For a set $S \subseteq Q$, its \emph{height profile} is the function $\P(S):V \to [-1,n+1]$ defined by
\[
[\P(S)](v) \ce
\begin{cases}
  \min\bigl(S_v \cup \{n+1\}\bigr),&v \in A \cup B \cup C^-,\\[1mm]
  \max\bigl(S_v \cup \{-1\}\bigr),&v \in C^+ \cup D \cup E.
\end{cases}.
\]
More generally, a \emph{profile} is any function $\P:V \to [-1,n+1]$ satisfying $\P(v) \in [0,n+1]$ for $v \in A \cup B \cup C^-$ and $\P(v) \in [-1,n]$ for $v \in C^+ \cup D \cup E$.
\end{definition}
\noindent We call the restriction of a profile to $A \cup B \cup C^-$ its \emph{lower} part and its restriction to $C^+ \cup D \cup E$ its \emph{upper} part. The values $n+1$ and $-1$ are just to record empty fibers.

Let $e_1 \ce (1,0,\ldots,0) \in \Zs^{d-1}$. Let
\begin{equation} \label{equ::shiftSet}
 \R \ce \{(0,0),(e_1,0),(0,e_1),(2e_1,0),(0,2e_1),(e_1,e_1)\};
\end{equation}
here $0$ denotes the zero vector in $\Zs^{d-1}$.

Let $\o_1,\o_2,\o_3:\T \to [0,\infty)_{\Rs}$ be the three weights whose existence and precise properties will be stated in \cref{lem::KLWeights} in \cref{subsec::threeWeights}, where we shall have $\supp(\o_1) \subseteq \T(B,B,D)$, $\supp(\o_2) \subseteq \T(A,C^-,C^+)$, and $\supp(\o_3) \subseteq \T(C^-,C^-,E)$. As explained in \cref{subsec::fiberAndRegions}, they are exactly the three components of the dual weight constructed by Keevash and Lim~\cite{keevash2026largest}; our division $C=C^- \cup C^+$ only refines their supports. We postpone their formal statement and first introduce the main idea of our proof.

\begin{definition}[Local defects] \label{def::localDefect}
Fix a profile $\P$. For $(x,y,z) \in \T(B,B,D)$ and $(r,s) \in \R$ such that $(x+r,y+s,z+r+s) \in \T(B,B,D)$, define
\[
\delta_1^{r,s}(x,y,z) \ce \P(z+r+s)-\P(x+r)-\P(y+s).
\]
For $(x,y,z) \in \T(A,C^-,C^+)$ and $(r,s) \in \R$ such that $(x+r,y+s,z+r+s) \in \T(A,C^-,C^+)$, define
\[
\delta_2^{r,s}(x,y,z) \ce \P(z+r+s)-\P(x+r)-\P(y+s).
\]
For $(x,y,z) \in \T(C^-,C^-,E)$, define
\[
\delta_3(x,y,z) \ce \P(z)-\P(x)-\P(y).
\]
\end{definition}

With slight abuse of notation, we also use $\delta$ for the following global defects.
\begin{definition}[Weighted defects] \label{def::defectSums}
For a profile $\P$ and $(r,s) \in \R$, let
\begin{align*}
 \delta_1^{r,s}(\P)
 & \ce \sum_{\substack{(x,y,z) \in \T(B,B,D)\\
          (x+r,y+s,z+r+s) \in \T(B,B,D)}}
       \o_1(x,y,z)|\delta_1^{r,s}(x,y,z)|,\\
 \delta_2^{r,s}(\P)
 & \ce \sum_{\substack{(x,y,z) \in \T(A,C^-,C^+)\\
          (x+r,y+s,z+r+s) \in \T(A,C^-,C^+)}}
       \o_2(x,y,z)|\delta_2^{r,s}(x,y,z)|,\\
\delta_3(\P) & \ce \sum_{(x,y,z) \in \T(C^-,C^-,E)} \o_3(x,y,z)|\delta_3(x,y,z)|,
\end{align*}
and define
\begin{equation}
 \delta(\P) \ce
 \max\Bigl\{
   \max_{(r,s) \in \R}\delta_1^{r,s}(\P),\,\,
   \max_{(r,s) \in \R}\delta_2^{r,s}(\P),\,\,
   \delta_3(\P)
 \Bigr\}.
\end{equation}
\end{definition}

The local defects are signed, while their absolute values measure the failure of the corresponding profile values to satisfy the additive relation. The unshifted defects already give the local exponential savings needed for the fixed-profile estimate (see \cref{prop::localEntropy}). The translated defects are included because four nearby additive relations isolate a second difference of the profile, which is used to count the possible profiles in \cref{sec::profileCountProof}.

There is a reason for separating the fixed-profile count from the profile count. If one counts each triple of fibers without fixing its profile values, the local count contains an unavoidable polynomial factor in $n$. In the case $d=2$, Ghosal showed that the unrestricted count on one triple of fibers is $\Theta(n^2 2^{2n})$, which gives an error term $O(n\log n)$ in the exponent; see the end of Section~3.3 in~\cite{ghosal2025number}. In our weighted setting, the same local loss would contribute $O_d(n^{d-1}\log n)$ to the exponent. Fixing the profile removes this polynomial loss from the local estimate, while the weighted profile count controls all profiles globally, with the defect compensating for the number of possible profiles and leaving only an $O_d(n^{d-1})$ error.

Our use of translated defects is partially inspired by Ghosal's proof for $d=2$; see Section~3.5 in~\cite{ghosal2025number}. Ghosal uses shifted defects to control the second differences of a one-dimensional height profile and then reconstructs the profile from its first two values and its second differences. Our argument is to make this cancellation-and-reconstruction mechanism compatible with the Keevash--Lim dual weights in every fixed dimension. We introduce this weighted defect and the fixed collection of bounded translations whose signed cancellations isolate second differences along lines parallel to $e_1$, while the losses near the boundaries of the regions remain negligible. The same weighted defect, after translating the relevant dual component, also gives the exponential saving in the fixed-profile estimate. Thus, a single quantity controls both the number of sum-free sets with a given profile and the number of possible profiles.

\subsection{Two main propositions} \label{subsec::twoprop}
For a profile $\P$, let $N(\P)$ denote the number of sum-free subsets $S \subseteq Q$ with $\P(S)=\P$. The proof of \cref{thm::main} reduces to the following two statements. All implicit constants are uniform over the profiles.

\begin{proposition}[Fixed-profile count] \label{prop::fixedProfile}
There is an absolute constant $\alpha_0>0$ such that, for every fixed integer $d \ge 3$, every sufficiently large even integer $n$, and every profile $\P$,
\[
N(\P) \le 2^{M(Q)-\alpha_0\delta(\P)+O_d(n^{d-1})}.
\]
\end{proposition}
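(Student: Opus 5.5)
Fix a profile $\P$. We will bound the number of sum-free $S$ with $\P(S)=\P$ by an entropy argument over fibers, using the Keevash--Lim weights as fractional covering coefficients. Let $S$ be uniformly random among such sets, so $\log_2 N(\P)=H(S)=H((S_v)_{v\in V})$. The profile fixes, for each lower fiber $v\in A\cup B\cup C^-$, the minimum $\P(v)$ of $S_v$, and for each upper fiber the maximum. Hence each $S_v$ is a subset of an interval of length $n+1-\P(v)$ (lower) or $\P(v)+1$ (upper), with the endpoint forced. This gives the trivial local capacity $c_v$ with $H(S_v)\le c_v$.

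We then apply the Madiman--Tetali strong fractional subadditivity inequality. Using the weights $\o=\o_1+\o_2+\o_3$ from \cref{lem::KLWeights} together with $\beta$, we write the fractional cover of $V$ by triples and singletons, with $W_\o(v)+\beta(v)\ge 1$. This yields
\[
H(S)\le \sum_{(x,y,z)\in\T}\o(x,y,z)\,H(S_x,S_y,S_z)+\sum_v \beta(v)H(S_v)
\]
up to the conditional (strong) form, which handles the excess $W_\o(v)+\beta(v)-1$. Where the cover is not tight, that excess is $O_d(n^{d-2})$ per fiber by \cref{lem::KLWeights}. If the lemma only guarantees exact covering up to boundary terms, those terms contribute $O_d(n^{d-1})$. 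The singleton terms are bounded by $n+1$. The work is then to bound $H(S_x,S_y,S_z)\le \log_2 T(x,y,z)$, where $T$ counts triples $(S_x,S_y,S_z)$ that are sum-free across the triple and have the prescribed profile values.

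\textbf{Key local estimate} (this is \cref{prop::localEntropy}). For $(x,y,z)\in\T(B,B,D)$, the values $a=\P(x)$, $b=\P(y)$ are minima and $c=\P(z)$ is a maximum. Sum-freeness forbids pairs $(x,s),(y,c-s)$ and also forbids $z$-elements $a+b+\ldots$ from being sums of lower elements. We would count via Ghosal's one-dimensional estimate and Zhao's bipartite swapping lemma. The pairing $t\leftrightarrow c-t$ is a perfect matching between the two source intervals, and the swapping lemma shows the count is at most that of the symmetric case. The result should be
\[
\log_2 T\le 2(n+1)-\alpha\,|c-a-b|+O(1),
\]
with no polynomial factor, because the profile values are fixed. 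The same argument applies to $\T(A,C^-,C^+)$, where the roles are the same since $x,y$ are lower and $z$ is upper, and to $\T(C^-,C^-,E)$.

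Summing gives $H(S)\le 2(n+1)\sum\o+(n+1)\sum\beta-\alpha\sum\o|\delta|+O(\sum\o)$. Since $\sum\o=O_d(n^{d-1})$, the first two terms equal $M(Q)+O_d(n^{d-1})$ by the Keevash--Lim value, which gives the saving $\alpha\delta^{0,0}_i(\P)$.

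To obtain the full $\delta(\P)$, including the translated defects $\delta_i^{r,s}$, we repeat the argument with the translated weights $\o_i(\cdot-r,\cdot-s,\cdot-r-s)$, which \cref{lem::KLWeights} guarantees are still near-feasible dual solutions with value $M(Q)+O_d(n^{d-1})$. We then take the best of the finitely many bounds, so $\alpha_0=\alpha$.

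\textbf{Main obstacle.} The main obstacle is the local estimate with an honest linear saving $\alpha|c-a-b|$ and only $O(1)$ additive loss. This needs to be uniform over all profile values, including empty fibers (values $n+1$, $-1$) and the two sign cases $c>a+b$ and $c<a+b$. In the first case, the forbidden matching on $[a,c-b]$ costs about $\tfrac{1}{2}\log_2 3$ per pair versus $1$. In the second case, the $z$-fiber is shorter than its capacity. I would treat the two signs separately. I would first try to reduce the count to Ghosal's one-dimensional bound on $[a,c]$ via the swapping lemma.
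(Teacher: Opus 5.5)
Your architecture matches the paper's: uniform $S$, a fractional Shearer bound with the Keevash--Lim weights and $\beta$, a three-fiber bound $2L-\alpha|c-a-b|+\kappa$, and a rerun with the translated component $\o_i^{r,s}$ that realizes $\delta(\P)$. That translate is restricted to the good triples $\G_i^{r,s}$, and its near-optimality is \cref{lem::translationVariation,lem::objective}, not \cref{lem::KLWeights}.

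\textbf{The gap: triples with a repeated source.} Your inequality $H(S)\le\sum_{(x,y,z)\in\T}\o(x,y,z)H(S_x,S_y,S_z)+\sum_v\beta(v)H(S_v)$ is not a valid Shearer bound once $\o$ charges some $(v,v,2v)$. There $W_\o(v)$ counts $v$ twice, while the block $\{v,2v\}$ covers $v$ only once, so the cover condition $\sum_{U\ni v}\theta_U\ge 1$ can fail. Nothing excludes such triples:
\begin{itemize}
\item $\o_1$ comes from the Keevash--Lim coupling on $L_d(m)\times L_d(m)\times L_d(2m)$, for which no anti-collision property is available;
\item the two sources of $\o_3$ lie on the same layer whenever $h(z)$ is even;
\item the translate $\o_1^{e_1,0}$ turns every original triple with $y=x+e_1$ into a collision.
\end{itemize}
If you repair the cover with a singleton $\{v\}$ of weight $\o(v,v,2v)$, you must bound $H(X_v,X_{2v})+H(X_v)$. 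Your count $T$ (with $S_x=S_y$) does not control this. Bounding the extra term by $n+1$ costs $n\sum_v\o(v,v,2v)$, which is not known to be $O_d(n^{d-1})$.

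This is exactly where the paper uses the two tools you invoke in the wrong places. It orders $V$ by decreasing $h$, so that $2v\prec v$. It then applies the ordered conditional inequality \cref{lem::shearer} with blocks $\{v,2v\}$ and $\{v\}$, so the second copy costs only $H(X_v\mid X_{2v})$. Finally it proves $H(X,Z)+H(X\mid Z)\le 2L-\alpha|c-2a|+\kappa$: the left side is at most $\log_2\sum_T|\U_T|^2$, Zhao's lemma gives $|\U_T|^2\le|\U'_T|$, and this reduces to the distinct-source bound with $a=b$.

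By contrast, the strong form is not needed to handle excess cover, since ordinary Shearer already allows $\sum_{U\ni v}\theta_U>1$. The swapping lemma plays no role for distinct sources. It also goes the other way from what you wrote: it bounds the square of the symmetric count by the cross count.

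\textbf{The distinct-source estimate.} For $\Delta=c-a-b>0$, the matching heuristic alone does not suffice. The free choices number about $2^{2L+\Delta}$. The roughly $\Delta$ forbidden pairs $(t,c-t)$ save only a factor of about $(3/4)^{\Delta}$, which leaves growth like $(3/2)^{\Delta}$. The saving has to come from the target. Set $X'\ce\{i\in[0,\Delta-1]:a+i\in X\}$ and $Y'\ce\{j\in[0,\Delta-1]:b+j\in Y\}$. Then $0\in X'\cap Y'$ and $\Delta\notin X'+Y'$. Moreover, $Z$ must avoid the points $a+b+s$ with $s\in(X'+Y')\cap[0,\Delta-1]$, all of which lie below $c$. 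This gives at most $2^{2L-\Delta+2}g(\Delta-1)$ triples, and \cref{lem::Ghosal} yields the saving $\alpha\Delta$. The case $\Delta\le 0$ is the trivial count, as you say.
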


\begin{proposition}[Profile count] \label{prop::profileCount}
For every fixed integer $d \ge 3$, every $\eps>0$, every sufficiently large even integer $n$, and every real number $R \ge 0$, we have
\[
\bigl|\{\P : \delta(\P) \le R\}\bigr|
 \le 2^{\eps R+O_{d,\eps}(n^{d-1})}.
\]
The implicit constant is independent of $R$.
\end{proposition}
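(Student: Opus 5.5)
The plan is to encode a profile $\P$ with $\delta(\P)\le R$ by a small amount of data, and to show that the data cost is at most $\eps R + O_{d,\eps}(n^{d-1})$ bits. We organize $V$ into lines parallel to $e_1$: each line $L=\{w+te_1\}$ is determined by its last $d-2$ coordinates, so there are $(n+1)^{d-2}$ lines. On each line the profile is a one-dimensional function $g_L(t)\ce\P(w+te_1)$ with values in $[-1,n+1]$.

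\textbf{Step 1: isolate second differences.} We choose the signed combination of the four translates $(0,0),(e_1,0),(0,e_1),(e_1,e_1)$, or of $(0,0),(2e_1,0),(e_1,0)$ together with $(0,2e_1),(0,e_1)$. In the combination
\[
\delta^{e_1,0}-\delta^{0,0}-\delta^{e_1,e_1}+\delta^{0,e_1}
\]
evaluated at $(x,y,z)$, the terms $\P(y+\cdot)$ cancel in pairs. Likewise the $\P(x+\cdot)$ terms cancel in pairs. What survives is a second difference of $\P$ along $e_1$ at $z$:
\[
\P(z+e_1)-2\P(z)+\P(z+2e_1)\quad\text{up to sign/indexing.}
\]
Using $(2e_1,0)$ and $(e_1,0)$ instead, we also isolate second differences at $x$. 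Using $(0,2e_1)$ and $(0,e_1)$, we isolate them at $y$. Hence, for every $v$, the quantity
\[
\Delta^2\P(v)\ce\P(v+2e_1)-2\P(v+e_1)+\P(v)
\]
satisfies $|\Delta^2\P(v)|\le\sum|\delta^{r,s}(\text{triple})|$ over four triples containing $v$ in the appropriate role. We then weight by $\o_i$. From \cref{lem::KLWeights} (to be stated) we use the fact that every $v$ outside an $O(n^{d-2})$-size boundary layer has total incident weight $W_{\o_i}$ bounded below by a constant $c_d>0$ in the relevant role. Summing gives
\[
\sum_{v}\min(1,c_d)|\Delta^2\P(v)|\le O(\delta(\P)),
\]
restricted to $v$ whose relevant translated triples stay inside the regions. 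The $v$ where this fails (near region boundaries, and in $C^\pm$ split points) number $O_d(n^{d-2})$ per hyperplane. Crossing the lines, these amount to $O(1)$ exceptional positions per line.

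\textbf{Step 2: encode.} On each line we record $g_L$ at the two initial points and at the $O(1)$ exceptional points adjacent to each boundary, costing $O(\log n)$ bits per line. We then record the sequence of second differences $(\Delta^2 g_L(t))_t$, an integer sequence whose total $\ell_1$ norm over all lines is $K\le CR+O(n^{d-1})$. The number of integer sequences of length $N=O(n^{d-1})$ with $\ell_1$ norm at most $K$ is at most $2^N\binom{N+K}{N}$. The profile is then recovered by summation.

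The log cost is the obstacle. The $O(\log n)$ per line gives $n^{d-2}\log n=o(n^{d-1})$, which is fine. However, $\binom{N+K}{N}$ gives $K\log(1+N/K)+N\log(1+K/N)$, and when $K\gg N$ this is roughly $N\log(K/N)$, not $\eps K$. To get $\eps R$ we use $N\log(1+K/N)\le \eps K+O_\eps(N)$, which holds since $\log(1+x)\le \eps x+C_\eps$; the $K\log(1+N/K)$ term is $\le N$. This yields
\[
2^{\eps R+O_{d,\eps}(n^{d-1})}.
\]
The genuine difficulty is Step 1: we must check that the Keevash--Lim weights give each $v$ a uniformly positive weight in a role where all required translates remain in the same supports, uniformly in the translations from $\R$. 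Proving this is where the refined properties in \cref{sec::keevashlim} must be invoked.
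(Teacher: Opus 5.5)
Your mechanism is the one the paper uses for the lower part $A\cup B\cup C^-$: four translated defects cancel to an $e_1$-second difference, the profile is rebuilt along lines parallel to $e_1$, and an $\ell_1$ count uses $\log(1+x)\le\eps x+C_\eps$. Your target identity $\delta^{e_1,0}-\delta^{0,0}-\delta^{e_1,e_1}+\delta^{0,e_1}=-\Delta_{e_1}^2\P(z)$ is correct and would handle $D$, where the $\o_1$-target weight is $1$. However, the uniform lower bound you postpone in Step~1 is false.

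On $C^-$ and $C^+$, the only roles that carry translated defects are the second source and the target of $\o_2$, with weights $\lambda_-(v)$ and $\lambda_+(v')$. Each is a sum of at most two ratios $\l_{d-1}(r)/\l_{d-1}(h(v))$. Here $r\approx 3m-n-2h(v)$ (resp.\ $r\approx 2h(v')-(3m-n)$) is the matching first-source level in $A$. Near the middle level $(3m-n)/2$ of $C$ this $r$ is small, so $\lambda_\pm\asymp (r/n)^{d-2}$, down to order $n^{-(d-2)}$. Hence, for every fixed $c>0$, the set where $\lambda_\pm<c$ contains $\Omega_{d,c}(n^{d-1})$ points, not $O_d(n^{d-2})$. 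Declaring these points exceptional costs order $n^{d-1}\log n$ bits. Using the worst weight in your unweighted $\ell_1$ bound multiplies $K$ by up to $n^{d-2}$, which destroys the $\eps R$ term. The missing ingredient is a weighted count, $|\F|\le 2^{\theta K}\prod_v\sum_{i\in\Zs}2^{-\theta\lambda_v|i|}\le 2^{\theta K}\prod_v\frac{2}{\lambda_v(1-2^{-\theta})}$. It must be combined with the log-sum bound \cref{equ::lambdaLogSum}, i.e.\ $\sum_{v\in C^\pm}\log_2(2+1/\lambda_\pm(v))=O_d(n^{d-1})$. This bound follows from $\l_{d-1}(r)\ge c_d\min\{r,n\}^{d-2}$ and $\sum_{r\le n}\log_2(n/r)=O(n)$. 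A smaller point: the restriction to good configurations must be made on triples, not on points $v$. What you use is that the triples in $\B_i$ have total weight $O_d(n^{d-2})$ and that $|\Delta_{e_1}^2\P|\le 2n+2$.

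Second, for $d\ge 4$ the region $E$ has $\Theta_d(n^{d-1})$ points and is covered only by $\o_3$. But $\delta(\P)$ contains only the unshifted $\delta_3$; there are no translated $\o_3$-defects. So no cancellation identity controls second differences on $E$, and your encoding gives no bound on $\P|_E$ at all.

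The paper does not use second differences on the upper part. It first counts the restriction to $A\cup B\cup C^-$. Then, for each $z\in C^+\cup D\cup E^\ast$ with $E^\ast=\{z\in E:h(z)\ge 2m+2\}$, the function $\psi_z(t)=\sum_{x+y=z}\o_i(x,y,z)|t-\P(x)-\P(y)|$ involves only already-fixed source values. The triangle inequality then gives $\psi_z(t)\ge\frac{\lambda(z)}{2}|t-t_z|$ for a minimizer $t_z$. Summing the unshifted defects gives $\sum_z\lambda(z)|\P(z)-t_z|\le 6R$, which is counted by the same weighted geometric series. The single layer $h=2m+1$, where $\o_3$ has zero target weight, is counted freely.
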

\noindent We note that \cref{prop::profileCount} is slightly stronger than what we need: it counts all profiles with defect at most $R$, not just those realized by sum-free subsets of $Q$. We also remark that the assumption that $n$ is even is made only so that we can use the Keevash--Lim dual weights in the form recorded in \cref{sec::keevashlim}. Once these weight properties are available, the counting arguments do not use the parity of $n$.

\cref{thm::main} follows from these two propositions immediately.
\begin{proof}[Proof of \cref{thm::main} assuming \cref{prop::fixedProfile,prop::profileCount}]
We first prove
\begin{equation} \label{equ::zeroBasedGoal}
 \SF(Q) \le 2^{M(Q)+O_d(n^{d-1})}.
\end{equation}
Assume first that $n$ is even and sufficiently large. For every integer $r \ge 0$, applying \cref{prop::profileCount} with $\eps=\alpha_0/2$ and $R=r+1$, we obtain
\[
\bigl|\{\P : \delta(\P)<r+1\}\bigr|
 \le
2^{\frac{\alpha_0}{2}(r+1)+O_d(n^{d-1})}.
\]
Therefore, by \cref{prop::fixedProfile},
\begin{align*}
\SF(Q)
&=\sum_{r \ge 0}
  \sum_{\substack{\P\\r \le \delta(\P)<r+1}}N(\P) \le
\sum_{r \ge 0}
2^{M(Q)-\alpha_0r+O_d(n^{d-1})}
\cdot \bigl|\{\P : \delta(\P)<r+1\}\bigr|\\
& \le
2^{M(Q)+O_d(n^{d-1})}
\sum_{r \ge 0}2^{-\alpha_0r/2}
=2^{M(Q)+O_d(n^{d-1})} \cdot O(1)
=2^{M(Q)+O_d(n^{d-1})}.
\end{align*}
If $n$ is odd, let $Q^+ \ce [0,n+1]^d$. Then $Q \subseteq Q^+$, and the even case applied with $n+1$ gives
\[
\SF(Q) \le \SF(Q^+) \le 2^{M(Q^+)+O_d(n^{d-1})} \le 2^{M(Q)+O_d(n^{d-1})},
\]
because $M(Q^+) - M(Q) \le |Q^+ \sm Q|=O_d(n^{d-1})$.

Finally, $\nd \subseteq Q$ and $|Q \sm \nd|=(n+1)^d-n^d=O_d(n^{d-1})$. Hence, $M(Q) \le \mnd+O_d(n^{d-1})$ and $\sfnd \le \SF(Q)$. Together with \cref{equ::zeroBasedGoal} and the trivial lower bound $\sfnd \ge 2^{\mnd}$, this proves \cref{thm::main}.
\end{proof}

\section{The dual weights of Keevash and Lim} \label{sec::keevashlim}
In this section, we record the properties of the dual weights constructed by Keevash and Lim~\cite{keevash2026largest} that are needed in our proof. We first give their support properties, the estimates for the dual covering function, and the coordinate-sum formulas. Then, we show that these properties are stable under the translations used in the definition of $\delta(\P)$ and deduce the required bound on the dual objective.

Throughout this section, let $d \ge 3$ be fixed. All statements hold for every sufficiently large even integer $n$, and all implicit constants depend only on $d$.

\subsection{The three weights and their coordinate sums} \label{subsec::threeWeights}
Recall that in \cref{subsec::fiberAndRegions}, for every function $\o:\T \to [0,\infty)_{\Rs}$, we define
\[
W_\o(v) \ce \sum_{\substack{y,z \in V\\v+y=z}}\o(v,y,z) +\sum_{\substack{x,z \in V\\x+v=z}}\o(x,v,z) +\sum_{\substack{x,y \in V\\x+y=v}}\o(x,y,v).
\]

The following properties and the original support statements are given by Lemma~4.3 in~\cite{keevash2026largest}. The refinement of the supports from $C$ to $C^-$ and $C^+$ is verified in \cref{app::KLCoordinateSums}.

\begin{lemma} \label{lem::KLWeights}
There are nonnegative functions\footnote{When $d=3$, we have $\o_3=0$.} $\o_1,\o_2,\o_3:\T \to [0,\infty)_{\Rs}$ with the following properties.
\[
\supp(\o_1) \subseteq \T(B,B,D),\qquad
\supp(\o_2) \subseteq \T(A,C^-,C^+),\qquad
\supp(\o_3) \subseteq \T(C^-,C^-,E).
\]
Let $\o \ce \o_1+\o_2+\o_3$ and $W \ce W_\o$. Then, $W(v) \le 1$ for every $v \in C$ and
\[
\sum_{v \in V \sm C}|W(v)-1|=O_d(n^{d-2}).
\]
\end{lemma}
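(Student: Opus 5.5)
This lemma is essentially a restatement of Lemma~4.3 of Keevash and Lim~\cite{keevash2026largest}, so I will not rebuild the dual weights. I will import their construction and verify two things: that the covering estimates transfer verbatim, and that the supports refine as claimed once $C$ is split into $C^-$ and $C^+$. Keevash and Lim give $\o_1,\o_2,\o_3$ with $\supp(\o_1)\subseteq\T(B,B,D)$, $\supp(\o_2)\subseteq\T(A,C,C)$ and $\supp(\o_3)\subseteq\T(C,C,E)$. They also show that $W_\o\le 1$ on $C$ and that $W_\o$ equals $1$ on $V\sm C$ up to an $\ell^1$ error of $O_d(n^{d-2})$. I will check that these estimates are stated for the grid $V=[0,n]^{d-1}$ with $m=\lfloor u_d^*n\rfloor$ and $n$ even. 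If their normalisation of $m$ or of the regions differs by $O(1)$, the discrepancy moves only $O_d(n^{d-2})$ base points, each with $W=O(1)$. That is absorbed into the error term.

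The real content is the refinement of the supports, and it should follow from the coordinate sum $h$ alone. For $\o_2$, take a triple $(x,y,z)$ with $x\in A$ and $y,z\in C$. Then $h(z)=h(x)+h(y)\ge h(y)$, so the ordering is automatic, but a purely height-based argument does not put $y$ and $z$ on opposite sides of the midpoint $\lfloor(3m-n-1)/2\rfloor$. I would therefore look at the explicit formula for $\o_2$ in Keevash--Lim. My expectation is that $\o_2$ pairs a height $h(y)=m+k$ with a target height $h(z)=2m-n-k'$, a reflection about the middle of $C$. This is the reason the middle cut was chosen as it is.

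The concrete plan is to write $\o_2(x,y,z)$ as a function of the heights $(h(x),h(y),h(z))$, which is how such symmetric dual weights are built: uniform over each level set, normalised by counts of lattice points. Then I show that it vanishes unless $h(y)\le\lfloor(3m-n-1)/2\rfloor<h(z)$. If the original $\o_2$ also has some mass on triples with $y,z$ on the same side, I would modify it. One option is to symmetrise by swapping the roles of $y$ and $z$ where this is legitimate; another is to discard that mass. Either way I must verify that $W$ changes by a total of $O_d(n^{d-2})$ off $C$ and stays at most $1$ on $C$, since lowering weights only lowers $W$ on $C$.

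For $\o_3$, the sources lie in $C$ and the target in $E$. Since $h(z)>2m$ forces $h(x)+h(y)>2m$, the sources cannot both be low, but I need both in $C^-$, i.e. $h\le(3m-n)/2$. Then $h(z)\le 3m-n$, which requires $\o_3$ to be supported on targets in $E$ with $h(z)\le 3m-n$. I would check this against their formula: they use $\o_3$ to cover $E$ via sums of two low-$C$ points, and $3m-n\ge (d-1)n$ holds only when $u_d^*\ge d/3$, which is true. So every $E$ point has height at most $3m-n$. The main obstacle is the $\o_2$ support claim, which depends on details of the Keevash--Lim formula. That verification, together with any necessary pruning and its $O_d(n^{d-2})$ accounting, is what I would put in \cref{app::KLCoordinateSums}.
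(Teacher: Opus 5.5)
Your plan matches the paper's proof. Everything except the refined supports is quoted from Lemma~4.3 of Keevash and Lim, whose regions coincide exactly with $A,\dots,E$, so no $O(1)$ renormalisation is needed. Their $\o_2$ uses $s=\lfloor(3m-n-r)/2\rfloor$ and $t=r+s$ for $1\le r<m-n-1$, which is precisely the reflection about the middle of $C$ you predicted; it gives $m+1\le s\le\lfloor(3m-n-1)/2\rfloor$ and $\lceil(3m-n)/2\rceil\le t\le 2m-n-1$, so no pruning or symmetrising is ever needed.

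For $\o_3$, your bound $h(z)\le 3m-n$ on $E$ is only a necessary condition. For example, $u_5^*<9/5$, and then $\T(C,C,E)$ contains triples with a source in $C^+$. What makes the argument go through is that their split is balanced, $r=\lfloor t/2\rfloor$ and $s=\lceil t/2\rceil$ for $2m+2\le t\le(d-1)n$. This gives $m+1\le r\le s\le\lceil(d-1)n/2\rceil\le\lfloor(3m-n-1)/2\rfloor$ once $3m-dn\ge 3$, which uses the strict inequality $u_d^*>d/3$.
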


Throughout the rest of the paper, we fix the functions $\o_1,\o_2,\o_3$ given by \cref{lem::KLWeights}, and let $\o \ce \o_1+\o_2+\o_3$ and $W \ce W_\o$.

We next record the coordinate sums needed later. Their proofs are given in \cref{app::KLCoordinateSums}. These bounds ensure that the constant errors in the local counting estimates contribute only $O_d(n^{d-1})$ to the exponent.

\begin{lemma}[Coordinate sums of $\o_1$] \label{lem::w1Weights}
The function $\o_1$ supplied by \cref{lem::KLWeights} satisfies the following properties. For every $v \in B$, we have
\begin{equation} \label{equ::w1CoordinateSums}
 \sum_{y,z \in V}\o_1(v,y,z)
 =\sum_{x,z \in V}\o_1(x,v,z)
 =\frac{1}{2}+O_d(n^{-1}).
\end{equation}
Moreover, $\sum_{x,y \in V}\o_1(x,y,v)=1$ for every $v \in D$, and $\sum_{x,y,z \in V}\o_1(x,y,z)=O_d(n^{d-1})$.
\end{lemma}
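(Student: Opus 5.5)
The plan is to work directly from the explicit formula for $\o_1$ in Lemma~4.3 of~\cite{keevash2026largest} and check the three claims in turn. The one claim that is not a direct consequence of the construction is the pointwise source estimate in \cref{equ::w1CoordinateSums}, and I expect that to be the main obstacle.

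\textbf{Setup.} As I recall the construction, $\o_1$ is built target by target. For each $z \in D$, a unit of weight is distributed over the pairs $(x,y) \in B \times B$ with $x+y=z$, symmetrically in $x$ and $y$. Schematically,
\[
\o_1(x,y,z)=\frac{\rho(x,y,z)}{\sum_{x'+y'=z,\ x',y' \in B}\rho(x',y',z)},
\]
where $\rho$ is either the uniform density or a density depending only on $h(x),h(y),h(z)$. The refinement from $C$ to $C^\pm$ plays no role for $\o_1$, since its support $\T(B,B,D)$ does not meet $C$.

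\textbf{Target sums and total mass.} With this normalization, $\sum_{x,y}\o_1(x,y,v)=1$ for every $v \in D$ holds by definition. I will check that every $v \in D$ has at least one representation $x+y=v$ with $x,y \in B$, so that no normalizing denominator vanishes. This follows from $2(m-n) \le 2m-n$ and $h(v) \le 2m$: split $v$ coordinatewise so that each half has height in $[m-n,m]$, which is possible because $m-n<h(v)/2 \le m$ and each coordinate lies in $[0,n]$. Summing over targets then gives
\[
\sum_{x,y,z}\o_1(x,y,z)=|D| \le |V|=(n+1)^{d-1}=O_d(n^{d-1}).
\]

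\textbf{Equality of the two source sums.} The symmetry $\o_1(x,y,z)=\o_1(y,x,z)$ of the construction shows directly that the two source sums in \cref{equ::w1CoordinateSums} coincide. Moreover, for $v \in B$ only $\o_1$ contributes to $W(v)$: $\o_2$ lives on $A,C^-,C^+$, $\o_3$ lives on $C^-,E$, and $B$ is disjoint from these regions. Hence $W(v)$ equals twice the common source sum, and it suffices to show $W(v)=1+O_d(n^{-1})$ pointwise on $B$.

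\textbf{The pointwise source estimate (main obstacle).} \cref{lem::KLWeights} only gives an $\ell^1$ bound $O_d(n^{d-2})$ over $V \sm C$, which is not pointwise. So I would reprove the pointwise bound as follows.
\begin{enumerate}[(i)]
\item Write the source sum at $v \in B$ as $\sum_{y}\o_1(v,y,v+y)$.
\item Replace the lattice sums (the one over $y$, and the denominators over representations of $z$) by the corresponding volume integrals of the continuous Keevash--Lim weight on $[0,1]^{d-1}$, after rescaling by $n$.
\item Each denominator is a count of lattice points in a polytope of dimension $d-1$ with side lengths of order $n$. It equals its volume times $(1+O_d(n^{-1}))$, as long as that volume is bounded below by a constant times $n^{d-1}$. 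Thus each summand is $\o_1^{\mathrm{cont}}\cdot(1+O_d(n^{-1}))$, and the outer Riemann sum of a piecewise-Lipschitz function incurs relative error $O_d(n^{-1})$.
\item In the continuous setting the source integral equals exactly $1/2$. This identity is precisely how Keevash and Lim choose the density, via the optimality condition defining $u_d^*$ (equivalently, complementary slackness with the stripe optimum). The replacement of $u_d^*n$ by $m=\lfloor u_d^*n\rfloor$ perturbs the integrals by a further $O_d(n^{-1})$.
\end{enumerate}

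\textbf{The delicate points.} The care is needed near the boundary of $B$ and near faces of the cube, where the representation polytopes become thin and the relative lattice-point error could fail to be $O(n^{-1})$. There I would use one of two approaches. The first is to invoke the pointwise form of the Keevash--Lim estimate for $W$ on $B$, which I expect their Lemma~4.3 actually provides, and which the appendix (\cref{app::KLCoordinateSums}) presumably uses. The second is to show that the thin-denominator targets $z$ carry total weight $O_d(n^{-1})$ in each source sum: such $z$ lie within $O(1)$ of the boundary of $D$, which is a vanishing proportion of the $y$'s paired with any fixed $v$.
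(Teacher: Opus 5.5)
You have a genuine gap at the step you flag as the main obstacle, and it comes from misremembering how $\o_1$ is built. It is not a target-by-target normalization of a density $\rho$. Keevash and Lim take a coupling $(X_1,Y_1,Z_1)$ from \cref{lem::KLAdditiveCoupling} in dimension $d$ on $L_d(m)\times L_d(m)\times L_d(2m)$, with $X_1+Y_1=Z_1$ and all three marginals uniform. They then set $\o_1(x,y,z)=|D|\cdot\Pr\bigl((\pi(X_1),\pi(Y_1),\pi(Z_1))=(x,y,z)\bigr)$, where $\pi$ deletes the last coordinate.

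The observation you are missing is that $\pi$ restricts to bijections $L_d(m)\to B$ and $L_d(2m)\to D$: the deleted coordinate is recovered as $m-h(v)$, resp.\ $2m-h(v)$, and it lies in $[0,n]$ exactly on $B$, resp.\ $D$. Hence $\pi(X_1)$ and $\pi(Y_1)$ are uniform on $B$, and $\pi(Z_1)$ is uniform on $D$. So for every $v\in B$ both source sums equal exactly $|D|/|B|$. They agree because both marginals are uniform, not because of a symmetry of $\o_1$. Every target sum is exactly $1$ and the total mass is $|D|$, so your target and total-mass parts are fine in substance.

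What remains is purely global. We have $|B|=\l_d(m)=f_d(u_d^*)n^{d-1}+O_d(n^{d-2})$ and $|D|=\l_d(2m)=f_d(2u_d^*)n^{d-1}+O_d(n^{d-2})$, and $f_d(u_d^*)=2f_d(2u_d^*)$ gives $|D|/|B|=\tfrac12+O_d(n^{-1})$. No pointwise lattice-point counting or boundary analysis is needed.

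Your steps (ii)--(iv) cannot replace this. The condition $f_d(u_d^*)=2f_d(2u_d^*)$ only says that the \emph{average} source sum over $B$ is about $\tfrac12$. The pointwise identity in (iv) is asserted, not derived, and it fails for a generic target-normalized weight. For instance, take the uniform choice $\o_1(x,y,z)=1/r(z)$, where $r(z)$ counts the representations $z=x+y$ with $x,y\in B$. A source $v$ with $h(v)=m-n$ has partners $y$ only on the single layer $h(y)=m$, so it has $O_d(n^{d-2})$ of them. Meanwhile $r(z)=\Theta_d(n^{d-1})$ for generic such targets $z$. So the source sum at a generic such $v$ is of order $n^{-1}$, not $\tfrac12$, although the lemma claims the bound for every $v\in B$.

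Your two fallbacks do not close this. The first is an unverified expectation about what Lemma~4.3 of Keevash--Lim states, not an argument. The second only controls thin-denominator targets, not sources whose own set of partners is small. The uniform-marginal property of the coupling, transported to $B$ and $D$ by the projection bijections, is exactly what makes the source sums constant on $B$. Once you use it, the statement is exact at the discrete level.
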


\begin{lemma}[Coordinate sums of $\o_2$] \label{lem::w2Weights}
The function $\o_2$ supplied by \cref{lem::KLWeights} satisfies the following properties. For every $v \in A$, we have
\[
\sum_{y,z \in V}\o_2(v,y,z) =
\begin{cases}
  1,&1 \le h(v) \le m-n-2,\\
  0,&\text{otherwise}.
\end{cases}.
\]
For $v \in C^-$ and $v' \in C^+$, let
\[
\lambda_-(v) \ce \sum_{x,z \in V}\o_2(x,v,z), \qquad \lambda_+(v') \ce \sum_{x,y \in V}\o_2(x,y,v').
\]
The values $\lambda_-(v)$ and $\lambda_+(v')$ depend only on $h(v)$ and $h(v')$, respectively, and both satisfy $0<\lambda_-(v),\lambda_+(v') \le 1$ throughout their respective regions. Moreover,
\begin{equation} \label{equ::lambdaLogSum}
  \sum_{v \in C^-}\log_2\Bigl(2+\frac{1}{\lambda_-(v)}\Bigr)
  +\sum_{v' \in C^+}\log_2\Bigl(2+\frac{1}{\lambda_+(v')}\Bigr)
  =O_d(n^{d-1}),
\end{equation}
and $\sum_{x,y,z \in V}\o_2(x,y,z)=O_d(n^{d-1})$.
\end{lemma}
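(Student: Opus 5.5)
The approach is to read the properties off the explicit formula for the second component of the Keevash--Lim dual weight (Lemma~4.3 of~\cite{keevash2026largest} and the construction preceding it). That formula depends on a triple $(x,y,z)\in\T(A,C,C)$ only through the heights $h(x),h(y),h(z)$, normalized by lattice-point counts of level sets. Concretely, I expect it to have the form
\[
\o_2(x,y,z)=\frac{g\bigl(h(x),h(y)\bigr)}{N\bigl(h(x),h(y)\bigr)},
\]
where $g$ is a one-dimensional weight on pairs of levels and $N(a,b)$ counts the solutions $x+y=z$ with $h(x)=a$, $h(y)=b$ and $x,y,x+y\in V$ (or a similar normalizer). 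I would carry out the steps in this order.

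\textbf{Support refinement.} The first step is to check from the formula that on $\supp(\o_2)$ the source $y$ always lies in the lower half of $C$ and the target $z=x+y$ in the upper half. In the Keevash--Lim construction the layer $h(y)=k$ is paired with a layer $h(z)$ symmetric to it about the midpoint of $C$, so that $h(y)+h(z)$ is fixed at roughly $3m-n$. I would show $h(y)\le\lfloor(3m-n-1)/2\rfloor<h(z)$ by direct arithmetic on these endpoints. If instead their pairing is not exactly symmetric, I would use $h(x)\ge 0$, which forces $h(z)\ge h(y)$, together with the explicit range of $h(x)$ used in the pairing, and check that the layers meeting the midpoint get zero weight. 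Boundary layers where this fails have only $O_d(n^{d-2})$ points and would be dealt with by zeroing them out. Such a modification only perturbs $W$ on $O_d(n^{d-2})$ vertices by $O(1)$, so \cref{lem::KLWeights} is preserved.

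\textbf{Coordinate sums.} The sum $\sum_{y,z}\o_2(v,y,z)$ for $v\in A$ is the total weight leaving $v$. Because of the normalization it equals $\sum_b g(h(v),b)$, and the construction makes this exactly $1$ on the stated range $1\le h(v)\le m-n-2$ and $0$ elsewhere. The exclusion of the extreme layers comes from where $g$ is defined. Similarly, $\lambda_-(v)$ and $\lambda_+(v')$ are sums over levels of $g$ times ratios of level-set counts. These depend only on the heights because, by the translation and permutation symmetry of the counting, the number of pairs $(x,z)$ with $x+v=z$ in prescribed levels depends only on $h(v)$ up to boundary effects. I would either confirm that the construction makes this exact, or redefine the normalizer so that it is exact. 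The bound $\lambda\le 1$ follows from $W(v)\le 1$ on $C$ in \cref{lem::KLWeights}, and positivity follows because every interior layer of $C$ receives some weight. The total mass bound follows from summing the $A$-coordinate sums: at most $|A|=O_d(n^{d-1})$.

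\textbf{Main obstacle: the logarithmic estimate \cref{equ::lambdaLogSum}.} Since $|C|=\Theta_d(n^{d-1})$, it suffices to show that $\lambda_\pm(v)\ge c_d>0$ for all $v$ outside $O(1)$ extreme layers of $C^\pm$. In those extreme layers, of size $O_d(n^{d-2})$ each, I would aim for a polynomial lower bound $\lambda_\pm\ge n^{-O_d(1)}$; their contribution is then $O_d(n^{d-2}\log n)=O_d(n^{d-1})$. More generally, if $\lambda$ degrades linearly in the distance $j$ to the boundary of $C$, like $j/n$, the contribution is $\sum_j n^{d-2}\log(n/j)=O_d(n^{d-1})$, which also suffices. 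The lower bound itself comes from estimating the level-set counts: the incoming mass at a level $k$ of $C$ is an integral of $g$ over an interval of source levels of length proportional to the distance to the ends of $C$, times a density ratio of lattice points in simplicial slices of $[0,n]^{d-1}$, which is bounded above and below by constants. The delicate part is the ends of $C^\pm$ near height $m$, $2m-n$, and the new midpoint. There I would compute the range of admissible $x\in A$ explicitly and verify that it has length at least $1$, which gives $\lambda\gtrsim 1/n$ at worst.
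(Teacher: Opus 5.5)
Your overall plan of reading everything off the explicit Keevash--Lim construction matches the paper's. Several pieces are fine: the support refinement by endpoint arithmetic, $\lambda_\pm\le W\le 1$ on $C$, and total mass $\le |A|$. But two steps would not go through, including the one you flag as the main obstacle.

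\textbf{The form of the weight.} The weight is not of the form $g/N$ with $N$ counting all solutions, and the exact claims (``$=1$'', ``depends only on $h$'') do not follow from symmetry of such counts. In the construction, each first-source level $r\in[1,m-n-2]$ is paired with the single levels $s=\lfloor(3m-n-r)/2\rfloor$ and $t=r+s$. Then $\omega_2(x,y,z)=\ell_{d-1}(r)\Pr[(X,Y,Z)=(x,y,z)]$, where the coupling lives on $L_{d-1}(r)\times L_{d-1}(s)\times L_{d-1}(t)$, satisfies $X+Y=Z$, and has all three marginals \emph{exactly uniform} (\cref{lem::KLAdditiveCoupling}). That uniformity is what makes each coordinate sum an exact ratio of layer sizes. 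The first-source sum is $1$, and $\lambda_-(v)=\sum_r \ell_{d-1}(r)/\ell_{d-1}(h(v))$ over the admissible $r\in\{3m-n-2h(v)-1,\,3m-n-2h(v)\}$; $\lambda_+$ is similar. For a weight uniform over all solutions in given layers, the number of partners of a point depends on its distance to the faces of $[0,n]^{d-1}$. So you only get ``up to boundary effects'', even for the first-source sum. Redefining the normalizer or zeroing out layers is not an option either. The lemma concerns the specific $\omega_2$ of \cref{lem::KLWeights}, whose properties ($W\le1$ on $C$ and the covering estimate) are inherited, not reproved.

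\textbf{The lower bound on $\lambda_\pm$.} Your lower bound for $\lambda_\pm$ is false in exactly the regime that matters. The slice ratio $\ell_{d-1}(r)/\ell_{d-1}(h(v))$ is not bounded below. A level $k$ of $C^-$ is fed only by $r\approx 3m-n-2k$, which drops to $1$ or $2$ as $k$ approaches the midpoint $(3m-n)/2$ of $C$; for $C^+$, $r\approx 2k-(3m-n)$. Since $\ell_{d-1}(r)=\binom{r+d-2}{d-2}$ for $r\le n$ and $\ell_{d-1}(k)=\Theta_d(n^{d-2})$ on $C$, one has $\lambda_\pm\asymp (r/n)^{d-2}$ on $\Theta(n)$ levels. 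This reaches $\Theta_d(n^{-(d-2)})$ at the $C^-/C^+$ interface, not at the boundary of $C$. Consequently:
\begin{itemize}
\item ``$\lambda\ge c_d$ off $O(1)$ layers'' is false.
\item ``$\lambda\gtrsim 1/n$ at worst'' fails for $d\ge 4$.
\item A uniform polynomial lower bound only gives $O_d(n^{d-1}\log n)$.
\end{itemize}
The missing argument has three steps:
\begin{itemize}
\item Choose one admissible $r$ per level and bound $\lambda\ge \ell_{d-1}(r)/(n+1)^{d-2}\ge c_d\min\{1,(r/n)^{d-2}\}$, using \cref{lem::sliceDifference} when $r\ge n$.
\item Observe that the two-element candidate sets of consecutive levels shift by $2$, so each $r$ serves at most one level of $C^-$ and one of $C^+$.
\item Conclude that the left side of \cref{equ::lambdaLogSum} is at most $C_d(n+1)^{d-2}\sum_{r=1}^{m-n-2}\bigl(1+\max\{0,\log_2(n/r)\}\bigr)=O_d(n^{d-1})$.
\end{itemize}
Your fallback sum $\sum_j n^{d-2}\log(n/j)$ has the right shape, but the mechanism you propose for the lower bound would not produce it. The same candidate analysis is also what gives $\lambda_\pm>0$ at \emph{every} level of $C^\pm$, including the extreme ones, not just ``interior'' layers.
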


\begin{lemma}[Coordinate sums of $\o_3$] \label{lem::w3Weights}
The function $\o_3$ supplied by \cref{lem::KLWeights} satisfies the following properties. For every $v \in E$ with $h(v) \ge 2m+2$, we have $\sum_{x,y \in V}\o_3(x,y,v)=1$. For $v$ with $h(v)=2m+1$, this sum is $0$. Moreover, $\sum_{x,y,z \in V}\o_3(x,y,z)=O_d(n^{d-1})$.
\end{lemma}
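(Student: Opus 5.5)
The plan is to read all three claims off the explicit form of the third Keevash--Lim component, with the support condition $\supp(\o_3)\subseteq\T(C^-,C^-,E)$ from \cref{lem::KLWeights} doing most of the work. A caveat first: \cref{lem::KLWeights} alone only gives $\sum_{v\in V\sm C}|W(v)-1|=O_d(n^{d-2})$, which is too weak for the exact identities claimed. So the proof has to use the actual definition of $\o_3$. I take it to be the target-normalized weight. For $z\in E$, let $N(z)$ be the number of pairs $(x,y)\in C^-\times C^-$ with $x+y=z$. Then set $\o_3(x,y,z)\ce 1/N(z)$ on such triples when $N(z)>0$, and $\o_3\ce 0$ otherwise. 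This should be checked against Lemma~4.3 of~\cite{keevash2026largest} together with the refinement in \cref{app::KLCoordinateSums}.

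\emph{The value $0$ at $h(v)=2m+1$.} This needs only the support condition. Every $x\in C^-\subseteq C$ has $h(x)\ge m+1$. Hence any triple $(x,y,v)\in\supp(\o_3)$ has $h(v)=h(x)+h(y)\ge 2m+2$, so $\sum_{x,y}\o_3(x,y,v)=0$ when $h(v)=2m+1$.

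\emph{The value $1$ for $h(v)\ge 2m+2$.} Here the target-normalized form gives $\sum_{x,y}\o_3(x,y,v)=1$ as soon as $N(v)>0$. So the task is to exhibit, for every $v\in E$ with $h(v)\ge 2m+2$, a splitting $v=x+y$ with $x,y\in V$ and $m+1\le h(x),h(y)\le\lfloor(3m-n-1)/2\rfloor$. I do this in two steps.

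\begin{enumerate}
\item \emph{Choose the heights.} Pick integers $a,b$ in $I\ce[m+1,\lfloor(3m-n-1)/2\rfloor]$ with $a+b=h(v)$. This is possible provided $h(v)\in[2m+2,\,2\lfloor(3m-n-1)/2\rfloor]$. The right endpoint is at least $3m-n-2$, and this is at least $(d-1)n\ge h(v)$ because $m=\lfloor u_d^*n\rfloor$ with $u_d^*>d/3$ and $n$ is large. The interval $I$ is nonempty for the same reason, since $u_d^*>d/3\ge 1$ gives $3m-n-1\ge 2m+2$. This is exactly where the Keevash--Lim bound $d/3<u_d^*$ is used.
\item \emph{Realize the heights in the box.} Given $0\le a\le h(v)$, choose $x\in[0,n]^{d-1}$ with $0\le x_i\le v_i$ coordinatewise and $h(x)=a$, by filling coordinates greedily. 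Then $y\ce v-x$ lies in $[0,n]^{d-1}$ with $h(y)=b$. So $x,y\in C^-$, and $N(v)>0$.
\end{enumerate}

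\emph{The total mass.} Summing over targets, $\sum_{x,y,z}\o_3(x,y,z)=\sum_{z\in E}\sum_{x,y}\o_3(x,y,z)\le|E|\le (n+1)^{d-1}=O_d(n^{d-1})$.

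When $d=3$, the region $E$ is empty and $\o_3=0$, so all three statements hold trivially.

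The main obstacle is the first point: confirming that the Keevash--Lim $\o_3$, after restricting sources from $C$ to $C^-$, really is normalized to exactly $1$ at each target rather than merely up to a small error. If their weight is instead a non-uniform distribution over decompositions, the same argument still applies provided it sums to $1$ at each target by construction. What must then be rechecked is that restricting to $C^-$ loses no targets, and that is precisely the height-window computation in the first step above.
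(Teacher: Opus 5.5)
There is a genuine gap. You prove the exact normalization for a weight you defined yourself, not for the $\o_3$ supplied by \cref{lem::KLWeights}, and that normalization is the whole content of the lemma. The paper takes Keevash--Lim's third component verbatim, with no restriction or renormalization. For each $t\in[2m+2,(d-1)n]$ it takes a coupling $(X_3^{(t)},Y_3^{(t)},Z_3^{(t)})$ from \cref{lem::KLAdditiveCoupling} on $L_{d-1}(\lfloor t/2\rfloor)\times L_{d-1}(\lceil t/2\rceil)\times L_{d-1}(t)$, with $X_3^{(t)}+Y_3^{(t)}=Z_3^{(t)}$ and uniform marginals. It then sets $\o_3(x,y,z)\ce\l_{d-1}(h(z))\cdot\Pr\bigl((X_3^{(h(z))},Y_3^{(h(z))},Z_3^{(h(z))})=(x,y,z)\bigr)$ for $2m+2\le h(z)\le(d-1)n$, and $\o_3\ce0$ otherwise. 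The missing step is short: for $h(v)=t\ge 2m+2$, we get $\sum_{x,y}\o_3(x,y,v)=\l_{d-1}(t)\Pr(Z_3^{(t)}=v)=1$ because $Z_3^{(t)}$ is uniform on $L_{d-1}(t)$. The total mass is then $\sum_t\l_{d-1}(t)=O_d(n^{d-1})$. Your fallback (``provided it sums to $1$ at each target by construction'') assumes exactly this fact. Your weight, uniform over all decompositions in $C^-\times C^-$, is also not a legitimate stand-in. A target at height $2m+1+k$ has at most $k(n+1)^{d-2}$ decompositions. A source $x$ with $h(x)=m+1$ whose coordinates stay a linear distance from $0$ and $n$ has $\Omega_d(n^{d-2})$ partners at each height $m+k$, for $1\le k\le c_dn$. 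So $x$ collects weight of order $\sum_{k\le c_dn}1/k$, which is of order $\log n$, contradicting $W\le1$ on $C$.

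Your Steps~1--2, the existence of a splitting with both heights in $C^-$, address a problem that does not arise. The paper never restricts sources from $C$ to $C^-$. The refined support in \cref{lem::KLWeights} is a consequence of $m+1\le\lfloor t/2\rfloor\le\lceil t/2\rceil\le\lfloor(3m-n-1)/2\rfloor$, and the existence of decompositions is built into the coupling. Two parts of your proposal do survive. Your proof of the zero sum at $h(v)=2m+1$ uses only $\supp(\o_3)\subseteq\T(C^-,C^-,E)$ and the fact that sources have height at least $m+1$. It is therefore valid for the actual $\o_3$, and is a fine alternative to the paper's observation that no coupling is indexed by $t=2m+1$. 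Your bound of the total mass by $|E|$ is also correct once the target sums are known to lie in $\{0,1\}$.
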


\subsection{Translated weights} \label{subsec::transWeights}
The definition of $\delta(\P)$ in \cref{subsec::profileanddefect} uses translations of $\o_1$ and $\o_2$ by vectors in $\R$. Here, we show that replacing one weight component by such a translation changes the dual covering function $W$ by only $O_d(n^{d-2})$.

Fix $(r,s) \in \R$. Let
\begin{align*}
\G_1^{r,s} & \ce \bigl\{(x,y,z) \in \supp(\o_1) : (x+r,y+s,z+r+s) \in \T(B,B,D)\bigr\}, \\
\G_2^{r,s} & \ce \bigl\{(x,y,z) \in \supp(\o_2) : (x+r,y+s,z+r+s) \in \T(A,C^-,C^+)\bigr\}.
\end{align*}
For $i \in \{1,2\}$, let $\B_i^{r,s} \ce \supp(\o_i) \sm \G_i^{r,s}$. Thus, $\G_i^{r,s}$ and $\B_i^{r,s}$ are, respectively, the good and bad triples for the shift $(r,s)$.

For $i \in \{1,2\}$, let $\o_i^{r,s}:\T \to [0,\infty)_{\Rs}$ be the function defined by
\[
\o_i^{r,s}(x,y,z) \ce
\begin{cases}
  \o_i(x-r,y-s,z-r-s),
  &\text{if }(x-r,y-s,z-r-s) \in \G_i^{r,s},\\
  0,&\text{otherwise}.
\end{cases}.
\]
For $t \in \Rs$, let $(t)_+ \ce \max\{t,0\}$. The proof of the following lemma is given in \cref{app::KLCoordinateSums}.

\begin{lemma} \label{lem::translationVariation}
Fix $(r,s) \in \R$ and $i \in \{1,2\}$. Let $\o' \ce \o-\o_i+\o_i^{r,s}$ and $W' \ce W_{\o'}$. Then,
\[
\sum_{(x,y,z) \in \B_i^{r,s}}\o_i(x,y,z) =O_d(n^{d-2}),
\]
and
\begin{equation} \label{equ::translationVariation}
 \sum_{v \in V}|W'(v)-W(v)|=O_d(n^{d-2}).
\end{equation}
Consequently,
\begin{equation} \label{equ::translatedWeightError}
 \sum_{v \in C}(W'(v)-1)_+ + \sum_{v \in V \sm C}|W'(v)-1|
 =O_d(n^{d-2}).
\end{equation}
\end{lemma}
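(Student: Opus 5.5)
We first bound the total weight of the bad triples $\B_i^{r,s}$, then deduce \cref{equ::translationVariation} by comparing $W'$ with $W$ coordinate by coordinate, and finally obtain \cref{equ::translatedWeightError} from \cref{lem::KLWeights}.

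\textbf{Step 1: the bad triples.} A triple $(x,y,z)\in\supp(\o_i)$ is bad only if one of $x+r$, $y+s$, $z+r+s$ leaves its region. The shifts satisfy $\|r\|_\infty,\|s\|_\infty\le 2$, and each region is a slab $\{a\le h(v)\le b\}$ intersected with $V=[0,n]^{d-1}$. So a coordinate can leave its region in only two ways:
\begin{itemize}
\item its $h$-value lies within distance $4$ of a slab endpoint;
\item its first coordinate lies in $\{n-3,\dots,n\}$, so that adding up to $2e_1$ leaves $[0,n]$.
\end{itemize}
Hence every bad triple has some coordinate $v$ in a set $L$, namely the union of $O(1)$ hyperplane layers $\{h(v)=c\}$ and $O(1)$ faces $\{v_1=c\}$. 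We then have $|L|=O_d(n^{d-2})$.

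We bound $\sum_{\B_i^{r,s}}\o_i$ by summing, over $v\in L$ and each of the three coordinate positions, the total $\o_i$-weight of triples having $v$ in that position. By \cref{lem::w1Weights,lem::w2Weights}, each such coordinate sum is $O(1)$. For $\o_1$ the sums are $\tfrac12+O(n^{-1})$ at sources and $1$ at targets. For $\o_2$ they are at most $1$ at $A$-sources, and $\lambda_\pm\le1$ on $C^\mp$. This gives $\sum_{\B_i^{r,s}}\o_i=O_d(n^{d-2})$.

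\textbf{Step 2: the variation of $W$.} We write $W'-W=W_{\o_i^{r,s}}-W_{\o_i}$, which holds by linearity of $\o\mapsto W_\o$. By construction, $\o_i^{r,s}$ is the pushforward of $\o_i|_{\G_i^{r,s}}$ under the map $(x,y,z)\mapsto(x+r,y+s,z+r+s)$. Therefore
\[
W_{\o_i^{r,s}}(v)=\sum_{\text{good triples}}\o_i(x,y,z)\bigl(\mathbf 1[x+r=v]+\mathbf 1[y+s=v]+\mathbf 1[z+r+s=v]\bigr).
\]
Define $W_{\o_i}^{\G}$ to be $W_{\o_i}$ restricted to good triples. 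Since every triple has three coordinates, the contribution of the bad part satisfies
\[
\sum_v|W_{\o_i}(v)-W^{\G}_{\o_i}(v)|\le 3\sum_{\B_i^{r,s}}\o_i=O_d(n^{d-2}).
\]
It remains to compare $W^{\G}_{\o_i}$ with its translate $W_{\o_i^{r,s}}$.

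This comparison is the main obstacle. Each coordinate function is shifted by a fixed vector, and a translate of a function differs from the function in $\ell^1$ by roughly the total variation along that direction. We therefore need the coordinate-sum functions to be regular in the relevant sense.

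For the first two coordinates (sources), we use that the coordinate sums depend essentially only on $h(v)$:
\begin{itemize}
\item for $\o_1$, the value is $\tfrac12+O(n^{-1})$ on $B$;
\item for $\o_2$, the value is $1$ on the interior of $A$ and $\lambda_-(h(v))$ on $C^-$;
\item the target sums are $1$ on $D$, and $\lambda_+(h)$ on $C^+$.
\end{itemize}
The shift changes $h$ by at most $4$, so the difference between $g(h)$ and $g(h+k)$ is controlled by the variation of $g$ over $h$.

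For $\o_1$, the $O(n^{-1})$ errors over the $\Theta(n^{d-1})$ points of $B$ would only give $O(n^{d-2})$ if paired correctly. We would instead argue directly with the Keevash--Lim formulas, as in \cref{app::KLCoordinateSums}: the defining density is a smooth (Lipschitz in $h/n$) function scaled by $n^{-(d-1)}$ per triple. Moving a triple by $O(1)$ then changes the marginal at each point by $O(n^{-1})$ times that marginal, and summing over all $O(n^{d-1})$ points gives $O(n^{d-2})$.

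For $\lambda_\pm$, we would use that they are monotone or piecewise smooth in $h$. Then the level-set telescoping $\sum_h |\{h(v)=h\}|\,|g(h)-g(h+k)|$ is $O(n^{d-2})\cdot\mathrm{TV}(g)=O(n^{d-2})$, together with the boundary layers already counted in $L$.

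\textbf{Step 3: conclusion.} Finally, \cref{equ::translatedWeightError} follows from the triangle inequality. On $C$, we have $(W'-1)_+\le (W-1)_+ + |W'-W| = |W'-W|$, since $W\le1$ there. On $V\sm C$, we have $|W'-1|\le|W-1|+|W'-W|$. Combining these with \cref{lem::KLWeights} and \cref{equ::translationVariation} gives the bound.
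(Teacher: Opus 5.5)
Your architecture matches the paper's. Bad triples are charged to an $O_d(n^{d-2})$ boundary layer using that every coordinate sum of $\o_i$ is at most $1$. Then $W'-W$ is written as the three shifted-minus-unshifted coordinate-sum functions plus at most $3\sum_{\B_i^{r,s}}\o_i$. The last estimate is the triangle inequality with $W\le 1$ on $C$.

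\textbf{The gap.} It lies in the step you flag as the main obstacle, for $i=2$. You need $\sum_{v}|\lambda_-(v-s)-\lambda_-(v)|$ and the analogous $\lambda_+$ sum with shift $r+s$ to be $O_d(n^{d-2})$. You only assert that $\lambda_\pm$ are ``monotone or piecewise smooth in $h$''. Nothing recorded in \cref{lem::w2Weights} gives this. Its three properties (dependence only on $h$, values in $(0,1]$, the logarithmic-sum bound) all hold if $\lambda_-$ alternates between $\tfrac12$ and $1$ on consecutive $h$-levels. For $s=e_1$ that would make the sum $\Theta(n^{d-1})$, because $C^-$ has $\Theta(n)$ levels, each of size $\Theta(n^{d-2})$. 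The regularity therefore has to come from the construction itself.

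\textbf{What is missing.} Write
\[
\lambda_-(v)=\sum_r \l_{d-1}(r)/\l_{d-1}(h(v))
\]
over the admissible $r\in\{3m-n-2h(v)-1,\,3m-n-2h(v)\}$. Similarly, $\lambda_+(v')$ uses $r\in\{2h(v')-(3m-n),\,2h(v')-(3m-n)+1\}$. Combine these formulas with two slice estimates:
\begin{enumerate}[(i)]
\item $|\l_{d-1}(k+j)-\l_{d-1}(k)|=O_d(n^{d-3})$ for $1\le j\le 4$, which follows from $\l_{d-1}(k+1)-\l_{d-1}(k)=\l_{d-2}(k+1)-\l_{d-2}(k-n)$;
\item $\l_{d-1}(k)=\Theta_d(n^{d-2})$ for $k\in[n,2m-n-1]$, a range containing every $h$-level of $C$.
\end{enumerate}
The lower bound on the denominator in (ii) is essential and absent from your sketch. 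With these, outside the $O(1)$ levels where a candidate level becomes inadmissible, $|\lambda_\pm(v-u)-\lambda_\pm(v)|=O_d(n^{-1})$. Summing over $O_d(n^{d-1})$ points gives the bound. Equivalently, the total variation in $h$ is $O_d(1)$, and then your level-set telescoping does go through.

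\textbf{On $\o_1$.} Your worry there is unfounded. An error of $O_d(n^{-1})$ per point over $O_d(n^{d-1})$ points is already $O_d(n^{d-2})$. In fact the source sums equal the constant $|D|/|B|$ on $B$, so only boundary points contribute. Your proposed fallback should not be used: the Keevash--Lim weight is not a ``smooth density per triple''. Their coupling only has uniform marginals, and that is all you may use.
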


For later use, define the common good and bad sets by
\[
\G_i \ce \bigcap_{(r,s) \in \R}\G_i^{r,s},
\qquad
\B_i \ce \supp(\o_i) \sm \G_i
=
\bigcup_{(r,s) \in \R}\B_i^{r,s}
\qquad (i \in \{1,2\}).
\]
Hence, every shift in $\R$ is valid on every triple in $\G_i$. Also, the first assertion of \cref{lem::translationVariation} and a union bound over the six pairs in $\R$ give
\begin{equation} \label{equ::commonValidMass}
\sum_{(x,y,z) \in \B_i}
\o_i(x,y,z)
=O_d(n^{d-2})
\qquad (i \in \{1,2\}).
\end{equation}

\subsection{The dual objective}
We now use the extremal stripe to bound the objective value of every translated certificate. Recall that in \cref{subsec::fiberAndRegions}, we defined $u_d^*$ to be the unique value of $u \ge 0$ maximizing the volume of $\{v \in [0,1]_{\Rs}^d : u \le h(v)<2u\}$ and let $m \ce \lfloor u_d^*n\rfloor$. The following elementary identity is the integer-threshold version of the stripe calculation from Lemma~4.4 in~\cite{keevash2026largest}. Their formulation uses the real threshold $u_d^*n$, while our use of the integer threshold $m$ makes the equality below exact. Let $S_{d,n} \ce \{v \in Q : m \le v_1+\cdots+v_d<2m\}$.

\begin{lemma} \label{lem::stripeFibers}
The set $S_{d,n}$ is sum-free. If $(x,y,z) \in \T(B,B,D) \cup \T(A,C,C) \cup \T(C,C,E)$, then
\begin{equation} \label{equ::stripeTriple}
   |S_{d,n} \cap F_x|+|S_{d,n} \cap F_y|+|S_{d,n} \cap F_z|=2(n+1).
\end{equation}
Moreover, $|S_{d,n} \cap F_v|=n+1$ for every $v \in C$.
\end{lemma}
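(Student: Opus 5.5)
The plan is to verify the three assertions of \cref{lem::stripeFibers} directly from the coordinate-sum description of $S_{d,n}$, writing every point of $Q$ as $(v,t)$ with $v \in V$ and $t \in [0,n]$. Then $(v,t) \in S_{d,n}$ if and only if $m \le h(v)+t < 2m$. Consequently
\[
S_{d,n} \cap F_v = \{(v,t) : \max(0,m-h(v)) \le t \le \min(n,2m-1-h(v))\}.
\]

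\textbf{Sum-freeness.} For $a,b \in S_{d,n}$ we have $h(a+b)=h(a)+h(b) \ge 2m$, so $a+b \notin S_{d,n}$. This needs nothing else.

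\textbf{Fiber sizes.} I will write $f(k)$ for the size of a fiber over a base point of height $k=h(v)$. The interval formula above gives the following values on each region.
\begin{itemize}
\item On $A$ ($k<m-n$): the lower end $m-k$ exceeds $n$, so $f(k)=0$.
\item On $B$ ($m-n \le k \le m$): the bounds are $t \ge m-k \ge 0$ and $t \le n$, where $2m-1-k \ge m-1 \ge n$ because $m>n$ (from $u_d^*>d/3 \ge 1$). So $f(k)=n+1-(m-k)$.
\item On $C$ ($m<k<2m-n$): the lower bound is $0$ and the upper bound $2m-1-k \ge n$, so $f(k)=n+1$. This is the last assertion of the lemma.
\item On $D$ ($2m-n \le k \le 2m$): $f(k)=2m-k$, which lies in $[0,n]$.
\item On $E$ ($k>2m$): $f(k)=0$.
\end{itemize}
I will use $m>n$ and $2m \le (d-1)n$ (from $u_d^*<d/2$) only to ensure these region formulas apply for large $n$.

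\textbf{Triples.} For $(x,y,z)$ with $h(z)=h(x)+h(y)$, I check \cref{equ::stripeTriple} case by case.
\begin{itemize}
\item In $\T(B,B,D)$: the sum is $2(n+1)-2m+h(x)+h(y)+2m-h(z)=2(n+1)$.
\item In $\T(A,C,C)$: the sum is $0+(n+1)+(n+1)$.
\item In $\T(C,C,E)$: the sum is $(n+1)+(n+1)+0$.
\end{itemize}

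The only step needing care is the boundary bookkeeping: confirming the inequalities $m-1 \ge n$ and $2m-1-k \ge n$ on $C$, and that the interval endpoints are integers, so the counts are exact rather than asymptotic. This follows from the integer threshold $m$ and the bounds $d/3<u_d^*<d/2$ for large $n$.
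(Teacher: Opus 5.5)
Your proof is correct and is the same direct fiber computation as the paper's. Both arguments use empty fibers on $A\cup E$, full fibers on $C$, and sizes $n+1-m+h(v)$ on $B$ and $2m-h(v)$ on $D$, then sum over each triple type using $h(z)=h(x)+h(y)$. Your explicit bookkeeping (that $m>n$ rules out truncation at the top of $B$-fibers and at the bottom of $D$-fibers) is a useful detail the paper leaves implicit.

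One small slip: $u_d^*<d/2$ does not imply $2m\le(d-1)n$; this fails for $d=3$, where $E=\es$. You never actually use that inequality, so nothing in the argument breaks.
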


\begin{proof}
The set $S_{d,n}$ is sum-free by its definition. Its fiber is empty on $A \cup E$ and is all of $[0,n]$ on $C$. If $v \in B$, then $|S_{d,n} \cap F_v|=n+1-m+h(v)$; if $v \in D$, then $|S_{d,n} \cap F_v|=2m-h(v)$. Thus, the three fiber sizes sum to $2(n+1)$ on $B \times B \times D$. On $A \times C \times C$ and $C \times C \times E$, they are respectively $0,n+1,n+1$ and $n+1,n+1,0$, so they also sum to $2(n+1)$.
\end{proof}

For the unchanged weight $\o$, the next estimate is the dual-objective calculation in the proof of Theorem~1.1 in~\cite{keevash2026largest}, rewritten in our notation. The point needed here is that the same estimate remains valid after one of the first two weight components is replaced by a translation.

\begin{lemma} \label{lem::objective}
Let $i \in \{1,2\}$ and $(r,s) \in \R$, and let $\o' \ce \o-\o_i+\o_i^{r,s}$. Let $W' \ce W_{\o'}$ and $\beta(v) \ce (1-W'(v))_+$ for every $v \in V$. Then,
\begin{equation} \label{equ::equinLemObjective}
2(n+1)\sum_{(x,y,z) \in \T}\o'(x,y,z) +(n+1)\sum_{v \in V}\beta(v) \le M(Q)+O_d(n^{d-1}).
\end{equation}
\end{lemma}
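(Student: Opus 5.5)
Our plan is to compare the dual objective with the size of the stripe $S_{d,n}$ from \cref{lem::stripeFibers}, used as a primal witness. Write $a_v \ce |S_{d,n} \cap F_v|$, so $\sum_{v} a_v = |S_{d,n}| \le M(Q)$. The standard weak-duality identity rewrites the objective as
\[
2(n+1)\sum_{\T}\o' + (n+1)\sum_v \beta(v)
= \sum_{(x,y,z)\in\T}\o'(x,y,z)\bigl(2(n+1)-a_x-a_y-a_z\bigr) + \sum_{v} a_v W'(v) + (n+1)\sum_v \beta(v).
\]
Here we use $\sum_{\T}\o'(x,y,z)(a_x+a_y+a_z)=\sum_v a_v W'(v)$, which holds by the definition of $W_{\o'}$ with multiplicities counted.

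The first step handles the triple sum. The support of $\o'$ lies in $\supp(\o_j)$ for $j\ne i$ together with the translated support of $\o_i^{r,s}$. By the definition of $\G_i^{r,s}$, a translated triple lies in $\T(B,B,D)$ (for $i=1$) or in $\T(A,C^-,C^+)\subseteq\T(A,C,C)$ (for $i=2$). Hence every triple in $\supp(\o')$ lies in $\T(B,B,D)\cup\T(A,C,C)\cup\T(C,C,E)$, and by \cref{equ::stripeTriple} the first sum vanishes exactly.

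The second step bounds the remaining terms, which equal $\sum_v \bigl(a_v W'(v)+(n+1)\beta(v)\bigr)$. We split over $v$.

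\emph{Case $v\in C$.} Here $a_v=n+1$. Since $W'+(1-W')_+=\max(W',1)$, the term equals $(n+1)\bigl(1+(W'(v)-1)_+\bigr)$.

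\emph{Case $v\notin C$.} Here $0\le a_v\le n+1$. We write
\[
a_vW'(v)+(n+1)\beta(v) \le a_v + (n+1)|W'(v)-1| + (n+1)\beta(v),
\]
and note that $\beta(v)\le|W'(v)-1|$.

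Summing both cases gives
\[
\sum_v a_v + 2(n+1)\Bigl[\sum_{v\in C}(W'(v)-1)_+ + \sum_{v\notin C}|W'(v)-1|\Bigr].
\]
By \cref{equ::translatedWeightError}, the bracket is $O_d(n^{d-2})$. The total is therefore at most $|S_{d,n}|+O_d(n^{d-1}) \le M(Q)+O_d(n^{d-1})$, because $S_{d,n}$ is sum-free.

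The only delicate point is the support check in the first step. It has to confirm that every translated triple genuinely lies in one of the three families where the stripe is tight. The condition defining $\G_i^{r,s}$ is exactly what guarantees this. Everything else is bookkeeping via \cref{lem::translationVariation}.
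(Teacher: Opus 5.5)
Your proof is correct and follows the paper's argument. Both use weak duality against the stripe $S_{d,n}$ via \cref{equ::stripeTriple}, split into the cases $v\in C$ and $v\notin C$, and finish with \cref{equ::translatedWeightError}. The differences are cosmetic: you spell out why every triple in $\supp(\o_i^{r,s})$ lies in $\T(B,B,D)$ or $\T(A,C^-,C^+)$, which the paper uses implicitly, and your bound outside $C$ carries a harmless factor $2(n+1)$ where the paper has $(n+1)$.
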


\begin{proof}
Let $a(v) \ce |S_{d,n} \cap F_v|$ for every $v \in V$. By \cref{equ::stripeTriple}, we have
\[
2(n+1)\sum_{(x,y,z) \in \T}\o'(x,y,z)
=
\sum_{(x,y,z) \in \T} \o'(x,y,z) \big(a(x) + a(y) + a(z)\big)
=
\sum_{v \in V}W'(v)a(v).
\]
Then, the left-hand side of \cref{equ::equinLemObjective} becomes $\sum_{v \in V} \left(W'(v) a(v) + (n+1) \beta(v)\right)$.

For $v \in C$, we have $a(v)=n+1$ and hence
\begin{align*}
W'(v)a(v)+(n+1)\beta(v)
= (n+1) \big(W'(v) + (1-W'(v))_+\big)
&=(n+1)\big(1+(W'(v)-1)_+\big)\\
&=a(v) + (n+1)(W'(v)-1)_+.
\end{align*}
For $v \in V \sm C$, note that $0 \le a(v) \le n+1$ by definition. Considering whether $W'(v) \ge 1$, we get
\[
W'(v)a(v)+(n+1)\beta(v)
= a(v) W'(v) + (n+1)(1-W'(v))_+
 \le a(v)+(n+1)|W'(v)-1|.
\]
Now, by \cref{equ::translatedWeightError} and the definition of $a(v)$, we have that the left-hand side of \cref{equ::equinLemObjective} is at most
\begin{align*}
& \sum_{v \in C} \left( a(v) + (n+1)(W'(v)-1)_+ \right) +
\sum_{v \in V \sm C} \left( a(v)+(n+1)|W'(v)-1| \right) \\
= & \sum_{v \in V} a(v) + (n+1) \Big ( \sum_{v \in C}(W'(v)-1)_+ + \sum_{v \in V \sm C}|W'(v)-1| \Big) \\
 \le & |S_{d,n}|+ (n+1) \cdot O_d(n^{d-2})
 \le M(Q)+O_d(n^{d-1}). \qedhere
\end{align*}
\end{proof}

\section{Proof of the fixed-profile proposition}
\label{sec::fixedProfileProof}

In this section, we prove \cref{prop::fixedProfile}. We first prove a local counting estimate for a fixed $(x,y,z) \in \T$, which includes two cases depending on whether $x=y$. We then combine all triples in $\T$ using a strong fractional entropy inequality of Madiman and Tetali~\cite{madiman2010information}.

For a finitely supported random variable $U$ with distribution $(p_u)_u$, its binary entropy is
\[
H(U) \ce -\sum_u p_u\log_2p_u.
\]
For finitely valued random variables $U_1,\ldots,U_k$, let $H(U_1,\ldots,U_k)$ denote the entropy of the joint random variable $(U_1,\ldots,U_k)$. For two random variables $U, U'$, let $H(U \mid U') \ce H(U, U')-H(U')$ be the conditional entropy.

\subsection{Local entropy estimates} \label{subsec::locEntropyEstimate}

\begin{proposition}[Local entropy estimates]\label{prop::localEntropy}
There are absolute constants $\alpha,\kappa>0$ such that the following hold for every integer $L \ge 0$.
\begin{enumerate}[(1)]
\item Fix $a,b \in [0,L+1]$ and $c \in [-1,L]$. For every random triple $(X,Y,Z)$ supported on triples of subsets of $[0,L]$ satisfying
\[
 (X+Y) \cap Z=\es, \quad
 \min(X \cup \{L+1\})=a, \quad
 \min(Y \cup \{L+1\})=b, \quad
 \max(Z \cup \{-1\})=c,
\]
we have
\[
H(X,Y,Z) \le 2L-\alpha|c-a-b|+\kappa.
\]

\item Fix $a \in [0,L+1]$ and $c \in [-1,L]$. For every random pair $(X,Z)$ supported on pairs of subsets of $[0,L]$ satisfying
\[
(X+X) \cap Z=\es,\qquad \min(X \cup \{L+1\})=a,\qquad \max(Z \cup \{-1\})=c,
\]
we have
\[
H(X,Z)+H(X\mid Z) \le 2L-\alpha|c-2a|+\kappa.
\]
\end{enumerate}
\end{proposition}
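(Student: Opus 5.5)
We bound entropy by counting. If a random object is supported on a set of size $N$, its entropy is at most $\log_2 N$. So for (1) it suffices to show that the number $N_L(a,b,c)$ of triples $(X,Y,Z)$ of subsets of $[0,L]$ with the stated constraints satisfies
\[
N_L(a,b,c)\le 2^{2L-\alpha|c-a-b|+\kappa}.
\]
For (2) we use the chain rule: $H(X,Z)+H(X\mid Z)=H(Z)+2H(X\mid Z)$. For each admissible $Z$, let $N(Z)$ be the number of $X$ with $\min(X\cup\{L+1\})=a$ and $(X+X)\cap Z=\es$. Then $H(X\mid Z)\le \mathbb E\log_2 N(Z)$, and the Gibbs inequality gives $H(Z)+\mathbb E f(Z)\le \log_2\sum_Z 2^{f(Z)}$. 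Hence
\[
H(X,Z)+H(X\mid Z)\le \log_2\sum_{Z}N(Z)^2 .
\]
The right-hand side is the logarithm of the number of triples $(X,X',Z)$ with $\min X=\min X'=a$, $\max Z=c$, and $(X+X)\cap Z=(X'+X')\cap Z=\es$. So both parts reduce to one-dimensional counting estimates with exponential savings in $|c-a-b|$ (respectively $|c-2a|$).

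\textbf{The case $c<a+b$} (resp.\ $c<2a$) is a trivial count. We have $X\subseteq[a,L]$ with $a\in X$ forced, $Y\subseteq[b,L]$ with $b\in Y$ forced, and $Z\subseteq[0,c]$ with $c\in Z$ forced. This leaves at most $2^{(L-a)+(L-b)+c}=2^{2L-(a+b-c)}$ choices, with the conventions for empty sets ($a=L+1$ or $c=-1$) handled separately and giving the same bound. For (2) we have analogously at most $2^{2(L-a)+c}=2^{2L-(2a-c)}$ choices of $(X,X',Z)$. This gives the saving with $\alpha=1$.

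\textbf{The case $c>a+b$} (resp.\ $c>2a$) is the genuine difficulty, and I expect it to be the main obstacle. The naive count of free bits is now $2L+(c-a-b)$, which exceeds $2L$. So the constraint $(X+Y)\cap Z=\es$ must save more than $c-a-b$ bits, plus a further linear amount. Simple pairings do not suffice. For example, using $c\in Z$ to forbid $x\in X$ together with $c-x\in Y$, and using $a\in X$ to keep $Z$ disjoint from $a+Y$ on $[a+b,c]$, only saves a factor $(3/4)$ per coordinate. That still leaves $2L+0.585(c-a-b)$.

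This is exactly the regime of Ghosal's one-dimensional estimate (the ``one-dimensional counting estimate due to Ghosal'' mentioned in the abstract). The plan is to invoke it, or re-prove it in the form needed. The idea is a Cameron--Erd\H{o}s type argument. Once $Z$ contains both $c$ and many elements of $[a+b,c]$, the sets $a+Y$, $b+X$, $c-X$ and $c-Y$ are all forced out of $Z$ or out of each other. A further case split follows on whether $|Z\cap[a+b,c]|$ is at least a small constant fraction of $c-a-b$:
\begin{enumerate}[(i)]
\item If $Z$ is sparse on $[a+b,c]$, then $Z$ itself loses a constant fraction of its $c-a-b$ excess bits. This is an entropy/binomial bound on sparse sets.
\item If $Z$ is dense there, then for each $z\in Z\cap[a+b,c]$ the pairs $(x,z-x)$ with $x\in[a,z-b]$ cannot both lie in $X\times Y$. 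Summing these disjointness constraints over many $z$ (a sum-set/Freiman-type argument, or Ghosal's counting lemma) forces $|X\cap[a,c-b]|+|Y\cap[b,c-a]|$ to be at most roughly $c-a-b$ plus a small term. This saves roughly $(1+\alpha)(c-a-b)$ bits, which compensates for the excess.
\end{enumerate}

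For (2) the same argument is run with $Y$ replaced by $X$ (and by $X'$ when counting $\sum_Z N(Z)^2$). Note that $X+X'$ is not constrained, so only the constraints $(X+X)\cap Z=\es$ and $(X'+X')\cap Z=\es$ are available. Each of $X$ and $X'$ separately interacts with $Z$ through the set $\{z\in Z: z\ge 2a\}$. The dense/sparse dichotomy on $Z\cap[2a,c]$ therefore gives the saving for both copies simultaneously. All constants produced are absolute, and the additive $\kappa$ absorbs boundary effects and the polynomial factors from summing over the case splits.
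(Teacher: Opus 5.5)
Your preliminary reductions are sound and agree with the paper. These are: counting for (1), the bound $H(X,Z)+H(X\mid Z)=H(Z)+2H(X\mid Z)\le\log_2\sum_Z N(Z)^2$ for (2), and the trivial count when $c\le a+b$ (resp.\ $c\le 2a$). The gap is the case $\Delta\ce c-a-b>0$, which is the entire content, and your sketch there does not work.

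In (ii), the bound $|X\cap[a,c-b]|+|Y\cap[b,c-a]|\le\Delta+1$ already follows from $c\in Z$ alone. A cardinality constraint of this kind removes at most a factor $2$ from the $4^{\Delta+1}$ choices (complement both sets), so it cannot save $(1+\alpha)\Delta$ bits. Even the full disjointness coming from $z=c$ saves only $\log_2(4/3)$ per coordinate, as you observed yourself. Similarly, (i) alone still leaves a positive excess of order $\Delta$ and would have to be combined with that $z=c$ saving.

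The saving has to come from a trade-off: the more of $X,Y$ lies in the length-$\Delta$ windows above $a,b$, the more positions of $[a+b,c)$ their sumset forbids to $Z$. Concretely, set $X'\ce(X-a)\cap[0,\Delta-1]$ and $Y'\ce(Y-b)\cap[0,\Delta-1]$. Then $0\in X'\cap Y'$ and $\Delta\notin X'+Y'$, and the rest of $X,Y$ costs at most $2^{2L-a-b-2\Delta+2}$. Given $(X',Y')$, there are at most $2^{c-|(X'+Y')\cap[0,\Delta-1]|}$ choices of $Z$. So the count is at most $2^{2L-\Delta+2}g(\Delta-1)$, where $g(t)=\sum 2^{-|(S_1+S_2)\cap[0,t]|}$ over pairs $S_1,S_2\subseteq[0,t]$ containing $0$ with $t+1\notin S_1+S_2$. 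Ghosal's bound $g(t)\le K(2-\eta)^t$ is exactly what closes this. Naming Ghosal's estimate is the right instinct, but without this reduction, and with (ii) as the proposed substitute, part (1) is not proved.

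For (2), you are missing the idea that handles the issue you yourself point out, namely that $X+X'$ is unconstrained. Ghosal's lemma concerns a cross sumset, whereas in $\sum_Z N(Z)^2$ the set $Z$ only has to avoid $(X+X)\cup(X'+X')$. So ``running the same argument'' does not apply.

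The paper uses Zhao's bipartite swapping lemma. If $a\le L$ and $2a\notin Z$, the admissible $X$ are exactly the sets $\{a\}\cup I$ with $I$ independent in the graph $G_Z$. This graph has vertex set $\{u\in[a+1,L]: a+u\notin Z,\ 2u\notin Z\}$, and $u\sim v$ if and only if $u+v\in Z$. Zhao's lemma says $i(G_Z)^2$ is at most the number of ordered pairs $(A,B)$ of vertex subsets with no edge between $A$ and $B$. Setting $U_1=\{a\}\cup A$ and $U_2=\{a\}\cup B$ gives
\[
N(Z)^2\le\bigl|\{(U_1,U_2):\min U_1=\min U_2=a,\ (U_1+U_2)\cap Z=\es\}\bigr|,
\]
so $\sum_Z N(Z)^2$ is bounded by the count from (1) with $b=a$.

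Without such a swap, the natural estimate $|(X+X)\cup(X'+X')|\ge\frac12(|X+X|+|X'+X'|)$ leads to sums of $2^{-|(S+S)\cap[0,t]|/2}$. The paper notes that these require the Green--Morris theorem on small-doubling sets. Your dense/sparse dichotomy on $Z\cap[2a,c]$ supplies no mechanism for this.
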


For a triple $(x,y,z) \in \T$ with $x \ne y$, the restrictions of a sum-free set to $F_x,F_y,F_z$ satisfy the hypothesis of \cref{prop::localEntropy}~(1). When $x=y=v$, our application of the strong fractional entropy inequality needs to use the blocks $\{v,2v\}$ and $\{v\}$. Since the target $2v$ precedes the source $v$ in the order used below, the second occurrence of the source needs to contribute only a conditional entropy. This is why part~(2) estimates $H(X,Z)+H(X\mid Z)$ rather than $H(X,Z)+H(X)$.

The repeated-source term can also be handled by ordinary fractional Shearer together with the Green--Morris theorem~\cite[Theorem~1.1]{green2016counting}. Two further alternatives are a modulo-$3$ refinement of fibers and an anti-collision refinement of the Keevash--Lim couplings. We use the strong fractional entropy inequality below because, together with Zhao's bipartite swapping lemma, it reduces part~(2) to part~(1) while keeping the definition of profiles. We discuss these alternatives in \cref{sec::concluding}.

We first prove part~(1). We use the following lemma due to Ghosal; see Definition~3.12 and Lemma~3.14 in~\cite{ghosal2025number}. For completeness, a self-contained proof, following Ghosal's main idea and simplifying the argument slightly, is given in \cref{app::GhosalLem}. For every integer $t \ge 0$, let $\S_t$ be the family of ordered pairs
\[
\S_t \ce
\bigl\{(S_1,S_2):S_1,S_2 \subseteq [0,t],\ 0\in S_1 \cap S_2,\ t+1\notin S_1+S_2\bigr\},
\]
and define
\[
g(t) \ce
\sum_{(S_1,S_2)\in\S_t}
2^{-| (S_1+S_2) \cap [0,t] |}.
\]

\begin{lemma}[Ghosal]\label{lem::Ghosal}
There are absolute constants $\eta>0$ and $K>0$ such that for every integer $t \ge 0$, we have
\[
g(t) \le K(2-\eta)^t.
\]
\end{lemma}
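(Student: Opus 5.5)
The plan is to bound $g(t)$ by splitting according to $a \ce \max S_1$ and $b \ce \max S_2$. Throughout I use two facts. First, $0 \in S_1 \cap S_2$ gives $S_1 \cup S_2 \subseteq (S_1+S_2)\cap[0,t]$. Second, the condition $t+1 \notin S_1+S_2$ says exactly that, for each $i \in [1,t]$, the events $i \in S_1$ and $t+1-i \in S_2$ do not both occur. So the pairs $(S_1,S_2)$ factor over the $t$ ``antidiagonal'' couples $\{i \in S_1\},\{t+1-i\in S_2\}$, each with three admissible states. The naive bound $|(S_1+S_2)\cap[0,t]| \ge (|S_1|+|S_2|)/2$ gives a per-couple factor $1+2\cdot 2^{-1/2} = 1+\sqrt2>2$. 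This is too weak, so the proof must extract genuine additive gain from the sumset.

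\textbf{Case $a+b \le t$.} Here the whole sumset lies in $[0,t]$, so $|(S_1+S_2)\cap[0,t]| = |S_1+S_2| \ge |S_1|+|S_2|-1$ by the elementary integer sumset bound. Moreover $t+1\notin S_1+S_2$ is automatic. Summing $2^{-|S_1|-|S_2|+1}$ over $S_1\subseteq[0,a]$, $S_2 \subseteq [0,b]$ containing $0,a$ and $0,b$ respectively gives at most $O\bigl((3/2)^{a+b}\bigr)$. Summing over $a+b\le t$ then gives $O\bigl(t^2(3/2)^t\bigr) \le K(2-\eta)^t$ for any $\eta<1/2$ after adjusting $K$.

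\textbf{Case $a+b \ge t+2$.} (Note $a+b=t+1$ is excluded.) I use the larger lower bound
\[
(S_1+S_2)\cap[0,t] \supseteq S_1 \cup S_2 \cup \bigl(a+(S_2\cap[0,t-a])\bigr) \cup \bigl(b+(S_1\cap[0,t-b])\bigr).
\]
Without loss of generality $a \ge b$, so $t-a < t/2$. The key observation is that the translate $a+(S_2\cap[0,t-a])$ lands in $[a,t]$. There the constraint $t+1\notin S_1+S_2$ (applied to the element $a\in S_1$) forces $t+1-a \notin S_2$, and more generally the couples constrain which positions of $[a,t]$ can already lie in $S_1\cup S_2$.

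I would then run a transfer-matrix or product computation over the couples. Each element of $S_2\cap[0,t-a]$ effectively costs a factor $1/4$ rather than $1/2$, because it produces itself together with a fresh element of the translate, after discounting coincidences with $S_1\cup S_2$. The factor $3$ per couple in the count is thereby offset by weights averaging strictly below $2/3$ per state. The resulting per-couple factor should be $2-\eta$ for an explicit $\eta>0$, up to boundary effects near $0$, $a$, $b$, and $t$, which are absorbed into $K$. The sum over $(a,b)$ contributes only a polynomial factor, which is again absorbed by shrinking $\eta$.

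\textbf{Main obstacle.} The hard step is the case $a+b\ge t+2$, and specifically the overlap accounting. The translate $a+(S_2\cap[0,t-a])$ may coincide with elements of $S_1\cup S_2$ already counted, and without care this destroys the gain. I would first try to show that such a coincidence, say $a+j \in S_1\cup S_2$ with $j\in S_2$, forces another couple into a restricted state via the antidiagonal constraint, so that each coincidence is paid for elsewhere. If this charging is too lossy, the fallback is to treat the extremal configurations separately: $S_1$ or $S_2$ nearly $\{0\}$, or both concentrated in a top interval, which are the ones giving weight close to $(3/2)^t$ or $2^t$. On the complement one would use a Freiman-type $3k-4$ lower bound for $|S_1+S_2|$ restricted to $[0,t]$.
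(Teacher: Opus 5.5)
Your case $a+b\le t$ is correct. The case $a+b\ge t+2$, however, has a genuine gap. The lower bound you propose for $(S_1+S_2)\cap[0,t]$ is too weak, so no transfer-matrix computation built on it can give $(2-\eta)^t$.

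Take $a=b=t$, which is allowed for $t\ge 2$. Then $a+(S_2\cap[0,t-a])=b+(S_1\cap[0,t-b])=\{t\}$, and your set collapses to $S_1\cup S_2$. For these pairs the element $1$ lies in neither set. The remaining constraints factor over the antidiagonal pairs $\{u,t+1-u\}$ with $2\le u\le t-1$, and each pair contributes exactly $9/2$ to $\sum 2^{-|S_1\cup S_2|}$. So this single choice of $(a,b)$ already gives $\Theta\bigl((9/2)^{t/2}\bigr)$, and $(9/2)^{1/2}>2$.

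The same failure persists for all $a,b\ge(1-\varepsilon)t$ with $\varepsilon$ small, because the two translates then have at most $\varepsilon t$ elements. Your claimed ``factor $1/4$'' is only available on $S_2\cap[0,t-a]$, which is nearly empty there. The fallbacks do not rescue this:
\begin{itemize}
\item The loss is not caused by coincidences, so a charging scheme cannot repair it.
\item The configurations responsible are typical rather than structured, so excising ``extremal configurations'' does not help.
\item The Freiman $3k-4$ fallback is not an argument as it stands. There is no off-the-shelf inequality of that type for the truncated sumset, which can be as small as $|S_1\cup S_2|$ (for instance $S_1=S_2=\{0\}\cup[t/2+1,t]$ with $t$ even). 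Converting structural information into the required weighted count would also need a separate counting theorem.
\end{itemize}

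What is missing is a mechanism that uses sums $x+y$ landing just below $t+1$ where neither $x$ nor $y$ is maximal. The paper obtains this from a dichotomy on the top quarter $J_t=[3t/4+1,t]$.

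\emph{First alternative:} $J_t\subseteq S_1+S_2$. Then $|(S_1+S_2)\cap[0,t]|\ge|S_1\cup S_2\cup J_t|$, and every antidiagonal pair meeting $J_t$ contributes $13/4$ instead of $9/2$. Since $(13/4)(9/2)=117/8<16$, this gives the required saving.

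\emph{Second alternative:} some $t+1-k$ with $1\le k\le t/4$ is also missing from $S_1+S_2$. The two constraint families $i+j=t+1$ and $i+j=t+1-k$ then form a graph on $[t]$. After deleting at most two loops, its components are paths, and every loop-free component has at least eight vertices. A transfer-matrix recurrence along a path gives weighted count at most $\gamma^{|V|}$ with $\gamma<2$. Summing over the at most $t/4$ values of $k$ costs only a polynomial factor.
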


\begin{proof}[Proof of \cref{prop::localEntropy} (1)]
Fix $\alpha > 0$ to be sufficiently small and $\kappa > 0$ to be sufficiently large. Let $\Delta \ce c-a-b$. It suffices to prove that the total number of possible triples $(X,Y,Z)$ is at most
\[
2^{2L-\alpha|\Delta|+\kappa}.
\]

Suppose first that $\Delta \le 0$. A source set with prescribed minimum $a$ has at most $2^{L-a+1}$ choices, including the case $a=L+1$ when the source is empty. Similarly, the second source has at most $2^{L-b+1}$ choices, and a target set with prescribed maximum $c$ has at most $2^{c+1}$ choices, including the case $c=-1$ when the target is empty. Therefore, the number of possible triples is at most
\[
2^{L-a+1}2^{L-b+1}2^{c+1}
=2^{2L+\Delta+3}
=2^{2L-|\Delta|+3}.
\]
Hence, this case satisfies the required bound whenever $\alpha \le 1$ and $\kappa \ge 3$.

Now, assume that $\Delta>0$. In particular, $a,b \le L$ and $c \ge 0$, so all three sets are nonempty. For every possible triple $(X,Y,Z)$, let
\[
X' \ce \{i \in [0,\Delta-1] : a+i \in X\}, \qquad
Y' \ce \{j \in [0,\Delta-1] : b+j \in Y\}.
\]
Then, $0 \in X' \cap Y'$ and $\Delta \notin X'+Y'$, since $c=a+b+\Delta$ belongs to $Z$. Note that every triple $(X,Y,Z)$ can be determined by $(X',Y',Z)$ and $(X \sm (X'+a), Y \sm (Y'+b))$.

For $(X \sm (X'+a), Y \sm (Y'+b))$, since $a$ and $b$ are the respective minima, all smaller elements are absent. Hence, these two sets are in $[a+\Delta, L]$ and $[b+\Delta, L]$, respectively, so the number of choices is at most
\begin{equation} \label{equ::numXsmXpYsmYp}
2^{L-a-\Delta+1} \cdot 2^{L-b-\Delta+1}
=2^{2L-a-b-2\Delta+2}.
\end{equation}

For $(X',Y',Z)$, note that for every $s \in (X'+Y') \cap [0,\Delta-1]$, the point $a+b+s$ belongs to $X+Y$ and satisfies $a+b+s<c$. Since $(X+Y) \cap Z=\es$, none of these distinct points may belong to $Z$. Hence, the elements in $Z \sm \{c\}$ are chosen from $[0,c-1]$ after excluding the $|(X'+Y') \cap [0,\Delta-1]|$ forbidden points. Therefore, given $X',Y'$, the number of choices of $Z$ is at most
\begin{equation}
2^{c-|(X'+Y') \cap [0,\Delta-1]|}.  \label{equ::numXpYpZ}
\end{equation}

By \cref{equ::numXsmXpYsmYp,equ::numXpYpZ} and the definition of $g(\Delta-1)$, the total number of possible $(X,Y,Z)$ is at most
\[
2^{2L-a-b-2\Delta+2} \cdot 2^c \cdot
\sum_{(X',Y')\in\S_{\Delta-1}}
2^{-|(X'+Y') \cap [0,\Delta-1]|}
=2^{2L-\Delta+2} \cdot g(\Delta-1).
\]
By \cref{lem::Ghosal}, this is at most $2^{2L-\alpha\Delta+O(1)}$, where one may take any $\alpha<1-\log_2(2-\eta)$.
\end{proof}

Now, we prove part~(2). We use the following corollary of Zhao's bipartite swapping lemma; see Lemma~2.1 in~\cite{zhao2010independent}. Indeed, Lemma~2.1 in~\cite{zhao2010independent} actually gives a size-preserving bijection between ordered pairs of independent sets of $G$ and a subfamily of these cross-independent pairs $(A, B)$.

\begin{lemma} \label{lem::bipartiteSwapping}
Let $G$ be a finite graph, and let $i(G)$ be the number of independent sets of $G$. Then, the number of ordered pairs $(A,B)$ of subsets of $V(G)$ such that no edge of $G$ has one endpoint in $A$ and the other in $B$ is at least $i(G)^2$.
\end{lemma}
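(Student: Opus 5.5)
The plan is to pass from ordered pairs of independent sets $(I,J)$ to cross-independent pairs $(A,B)$ by reorganizing each pair according to its common part and its symmetric difference. We then show that, for each fixed value of that data, there are at least as many pairs of the second kind as of the first. This is a counting version of Zhao's swapping argument, so no explicit bijection is needed.

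\textbf{Step 1 (structure of one pair).} Fix independent sets $I,J$ of $G$. Set $P \ce I \cap J$ and $U \ce (I \sm J) \cup (J \sm I)$.
\begin{enumerate}[(a)]
\item No vertex of $P$ has a neighbour in $I \cup J$. If $p \in P$ were adjacent to $w \in I$, this would contradict the independence of $I$; the case $w \in J$ is the same.
\item Every edge of $G[U]$ joins $I \sm J$ to $J \sm I$, because $I$ and $J$ are independent. So $G[U]$ is bipartite, and $(I\sm J,\,J\sm I)$ is a proper $2$-colouring of it.
\end{enumerate}
Let $\Omega_{P,U}$ be the set of ordered pairs of independent sets $(I,J)$ with $I\cap J=P$ and $I \triangle J=U$. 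Every pair in $\Omega_{P,U}$ is $(P\cup U_1,\, P\cup U_2)$ for some proper $2$-colouring $(U_1,U_2)$ of $G[U]$. Let $c$ be the number of connected components of $G[U]$, counting isolated vertices as components. A connected bipartite graph has exactly two proper $2$-colourings, so $|\Omega_{P,U}| \le 2^{c}$. The total number of ordered pairs of independent sets is therefore $i(G)^2=\sum_{(P,U)}|\Omega_{P,U}|$, where the sum runs over the pairs $(P,U)$ with $\Omega_{P,U}\neq\es$.

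\textbf{Step 2 (the cross-independent pairs with the same data).} Fix $(P,U)$ with $\Omega_{P,U}\ne\es$, and let $K_1,\dots,K_c$ be the components of $G[U]$. For each $\sigma\subseteq[c]$ set
\[
A_\sigma \ce P \cup \bigcup_{i\in\sigma}K_i,\qquad B_\sigma \ce P\cup\bigcup_{i\notin\sigma}K_i .
\]
Here $A_\sigma$ need not be independent. We check that no edge of $G$ goes between $A_\sigma$ and $B_\sigma$, by cases on the endpoints:
\begin{enumerate}[(a)]
\item an edge with an endpoint in $P$ and the other endpoint in $P\cup U\subseteq I\cup J$ is excluded by Step~1(a);
\item an edge between $U$-vertices lies inside a single component $K_i$, and each component is placed entirely on one side.
\end{enumerate}
So all $2^c$ pairs $(A_\sigma,B_\sigma)$ are cross-independent, and they are pairwise distinct.

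\textbf{Step 3 (injectivity across different data).} For every $\sigma$ we have $A_\sigma\cap B_\sigma=P$ and $A_\sigma\triangle B_\sigma=U$, so a pair $(A,B)$ produced in Step 2 determines $(P,U)$. Hence the families from different $(P,U)$ are disjoint. Summing over $(P,U)$, the number of cross-independent pairs is at least
\[
\sum_{(P,U)}2^{c(P,U)} \;\ge\; \sum_{(P,U)}|\Omega_{P,U}| \;=\; i(G)^2 .
\]

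The only real obstacle is choosing the right decomposition. A naive vertex-by-vertex swap can create edges between $A$ and $B$; the fix is to move whole components of $G[I\triangle J]$ and to group the pairs by $(I\cap J,\,I\triangle J)$. After that, the inequality reduces to the bound of at most two proper $2$-colourings per connected bipartite component.
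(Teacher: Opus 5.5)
Your proof is correct, and it is a self-contained, fiberwise-counting version of Zhao's bipartite swapping argument, which the paper only cites. Zhao's injection likewise preserves $I\cap J$ and $I\triangle J$ and moves each component of the bipartite graph $G[I\triangle J]$ wholesale to one side via a canonical choice, whereas you compare the sizes of the two families over each fixed $(P,U)$. Your Step~1 in fact gives $|\Omega_{P,U}|=2^{c}$ whenever $\Omega_{P,U}\neq\varnothing$, since $P$ has no neighbours in $P\cup U$ and so every proper $2$-colouring of $G[U]$ yields a pair of independent sets; your argument therefore also recovers the size-preserving bijection onto a subfamily of cross-independent pairs that the paper mentions.
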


\begin{proof}[Proof of \cref{prop::localEntropy}~(2)]
Fix $a \in [0,L+1]$ and $c \in [-1,L]$. Let $\Z_c \ce \{T \subseteq [0,L]: \max(T \cup \{-1\})=c\}$. For every $T \in \Z_c$, let $p_T \ce \Pr(Z=T)$ and
\[
\U_T \ce
\{S \subseteq [0,L]:\min(S \cup \{L+1\})=a,\ (S+S) \cap T=\es\}.
\]
Note that the support of $X$ conditional on $Z=T$ is contained in $\U_T$, so
\[
H(X\mid Z=T) \le \log_2|\U_T|.
\]
Hence, we have
\begin{align*}
H(X,Z)&+H(X\mid Z)
 =H(Z)+2H(X\mid Z)
= \sum_{T\in \Z_c} p_T \cdot \log_2 \frac{1}{p_T}
 + 2 \sum_{T\in \Z_c} p_T \cdot H(X \mid Z = T) \\
&
 \le \sum_{T\in \Z_c} p_T \log_2 \frac{1}{p_T}
 + 2 \sum_{T\in \Z_c} p_T \log_2 |\U_T|
= \sum_{T\in \Z_c} p_T \log_2 \frac{|\U_T|^2}{p_T}
 \le \log_2 \Big( \sum_{T\in \Z_c} |\U_T|^2 \Big)
.
\end{align*}
Now, we give an upper bound on $|\U_T|^2$ using \cref{lem::bipartiteSwapping}.
\begin{claim}
For every $T \in \Z_c$, define \[ \U'_T \ce \big\{ (U_1,U_2) : U_1, U_2 \subseteq [0,L],\, (U_1 + U_2) \cap T = \es, \,\min(U_1 \cup \{L+1\}) = \min(U_2 \cup \{L+1\}) = a \big\}. \] Then, we have
    \[
        |\U_T|^2 \le |\U'_T|.
    \]
\end{claim}
\begin{proof}
Fix an arbitrary $T \in \Z_c$. For the case $a=L+1$, we have $\U_T=\{\es\}$, and the claim follows since $(\es,\es) \in \U'_T$. Hence, assume that $a \le L$. If $2a \in T$, then $\U_T = \es$ and the claim is trivially true. Hence, assume that $2a\notin T$. Define the graph $G_T$ on
\[
V(G_T) \ce \{u\in[a+1,L]:a+u\notin T,\ 2u\notin T\},
\]
where two distinct vertices $u$ and $v$ are adjacent if $u+v\in T$. Note that the map $I\mapsto\{a\} \cup I$ is a bijection between the independent sets of $G_T$ and the sets in $\U_T$. Indeed, if $U\in\U_T$, then $a\in U$, every $u\in U\sm\{a\}$ belongs to $V(G_T)$, and $U\sm\{a\}$ is independent in $G_T$. Conversely, if $I$ is independent in $G_T$, then none of the possible sums $2a$, $a+u$, $2u$, or $u+v$ with distinct $u,v\in I$ belongs to $T$, so $\{a\} \cup I\in\U_T$.

Then, by \cref{lem::bipartiteSwapping}, $|\U_T|^2$ is at most the number of ordered pairs $(A,B)$ of subsets of $V(G_T)$ with no edge between them. For every such pair $(A,B)$, let $U_1 \ce \{a\} \cup A$ and $U_2 \ce \{a\} \cup B$, and we claim
\begin{equation} \label{equ::U1U2}
\min(U_1 \cup \{L+1\})=
\min(U_2 \cup \{L+1\})=a
\quad\text{and}\quad
(U_1+U_2) \cap T=\es.
\end{equation}
Indeed, we have $2a\notin T$. The definition of $V(G_T)$ excludes sums of the form $a+u$ and, when $u\in A \cap B$, sums of the form $2u$; the absence of an edge between $A$ and $B$ excludes sums $u+v \in T$ with distinct $u\in A$ and $v\in B$. Therefore, $|\U_T|^2$ is at most the number of ordered pairs $(U_1,U_2)$ satisfying \cref{equ::U1U2}, and all these pairs belong to $\U'_T$. This proves the claim.
\end{proof}

By the claim,
\[
H(X,Z)+H(X\mid Z)
 \le
\log_2\sum_{T\in\Z_c}|\U'_T|.
\]
The last sum is exactly the number of ordered triples $(U_1,U_2,T)$ satisfying
\[
(U_1+U_2) \cap T=\es,\qquad
\min(U_1 \cup \{L+1\})=
\min(U_2 \cup \{L+1\})=a,\qquad
\max(T \cup \{-1\})=c.
\]
Let $(X',Y',Z')$ be uniformly distributed over these triples. Applying \cref{prop::localEntropy}~(1) with both source minima equal to $a$, we obtain
\[
H(X,Z)+H(X\mid Z)
 \le
H(X',Y',Z')
 \le
2L-\alpha|c-2a|+\kappa,
\]
as required.
\end{proof}

\subsection{Completing the fixed-profile count} \label{subsec::completingFixProfileCount}

Now, we prove \cref{prop::fixedProfile}. We use the following ordered conditional fractional form of Shearer's inequality, due to Madiman and Tetali, see Theorem~I' in~\cite{madiman2010information}.

\begin{lemma}\label{lem::shearer}
Let $I$ be a finite set with a total order, and let $(X_v)_{v \in I}$ be finitely valued random variables. For $U \subseteq I$, let $I_{<U}$ be the set of elements of $I$ that precede every element of $U$. If $\theta_U \ge 0$ for $U \subseteq I$ and $\sum_{U: v \in U }\theta_U \ge 1$ for every $v \in I$, then
\[
H((X_v)_{v \in I})
 \le
\sum_{U \subseteq I}\theta_U
H\bigl((X_v)_{v \in U}\mid (X_v)_{v \in I_{<U}}\bigr).
\]
\end{lemma}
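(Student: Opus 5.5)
The plan is the standard chain-rule proof of Shearer's inequality, with the extra point that conditioning on the prefix $I_{<U}$ never hurts. Enumerate $I$ in its order as $v_1<v_2<\cdots<v_N$, and write $X_{<v}\ce(X_w)_{w<v}$. The chain rule gives the baseline identity
\[
H\bigl((X_v)_{v\in I}\bigr)=\sum_{v\in I}H(X_v\mid X_{<v}).
\]
The goal is to show that each term on the right-hand side of the lemma dominates the corresponding weighted piece of this sum.

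\emph{Step 1 (a single block).} Fix $U\subseteq I$ and list its elements in order as $u_1<\cdots<u_k$. Applying the chain rule inside $U$, conditionally on $(X_w)_{w\in I_{<U}}$, gives
\[
H\bigl((X_v)_{v\in U}\mid (X_v)_{v\in I_{<U}}\bigr)=\sum_{j=1}^k H\bigl(X_{u_j}\mid (X_w)_{w\in I_{<U}},\,X_{u_1},\ldots,X_{u_{j-1}}\bigr).
\]
Every element of $I_{<U}$ precedes $u_1\le u_j$, and each $u_i$ with $i<j$ precedes $u_j$. Hence the conditioning set $I_{<U}\cup\{u_1,\ldots,u_{j-1}\}$ is contained in $\{w\in I:w<u_j\}$. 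Since conditioning on more variables does not increase entropy, each summand is at least $H(X_{u_j}\mid X_{<u_j})$. Therefore
\[
H\bigl((X_v)_{v\in U}\mid (X_v)_{v\in I_{<U}}\bigr)\ge\sum_{v\in U}H(X_v\mid X_{<v}).
\]
This also holds trivially when $U=\es$.

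\emph{Step 2 (summing with weights).} Multiply the Step 1 inequality by $\theta_U\ge0$ and sum over all $U$. Interchanging the order of summation gives
\[
\sum_{U\subseteq I}\theta_U H\bigl((X_v)_{v\in U}\mid (X_v)_{v\in I_{<U}}\bigr)\ge\sum_{v\in I}\Bigl(\sum_{U\ni v}\theta_U\Bigr)H(X_v\mid X_{<v})\ge\sum_{v\in I}H(X_v\mid X_{<v}).
\]
The last inequality uses the covering hypothesis $\sum_{U\ni v}\theta_U\ge1$ together with $H(X_v\mid X_{<v})\ge0$. By the baseline identity, the right-hand side equals $H((X_v)_{v\in I})$.

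There is no genuine obstacle in this argument. The only point needing care is the containment in Step 1: the conditioning in each block is on the \emph{global} prefix $I_{<U}$ of elements preceding all of $U$, and not on $X_{<v}$. The inequality still goes the right way because this prefix, together with the earlier elements of $U$, lies inside $\{w:w<v\}$. Nonnegativity of conditional entropy for finitely valued variables is what allows the over-covering slack $\sum_{U\ni v}\theta_U>1$ to be discarded.
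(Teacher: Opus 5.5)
Your proof is correct. The paper does not prove this lemma itself; it imports it as Theorem~I$'$ of Madiman and Tetali. Your argument is the standard proof of that inequality: the chain rule inside each block conditioned on the common prefix $I_{<U}$, then the containment $I_{<U}\cup\{u_1,\dots,u_{j-1}\}\subseteq\{w: w<u_j\}$ to drop to $H(X_{u_j}\mid X_{<u_j})$, and finally the covering condition together with nonnegativity of conditional entropy. So it makes this step of the paper self-contained.
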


\begin{proof}[Proof of \cref{prop::fixedProfile}]
If $N(\P)=0$, then the conclusion is trivial. Hence, we may assume that $N(\P)>0$. Let $S$ be the random variable that is uniformly distributed over the $N(\P)$ sum-free sets with profile $\P$, and let $ X_v \ce S_v$ for every $v\in V$. Since the map $S\mapsto( X_v)_{v\in V}$ is bijective, we have
\[
\log_2N(\P) = H(S) = H(( X_v)_{v\in V}).
\]

Let $\prec$ be the order on $V$ such that $h(u)>h(v)$ implies $u\prec v$, breaking ties arbitrarily. For $U \subseteq V$, write $ X_{<U}$ for the collection of variables $ X_{v'}$ such that $v'$ precedes every element of $U$, and write $ X_{<v}$ for $ X_{<\{v\}}$.

Choose one of the weighted defect sums whose value is $\delta(\P)$. If it is $\delta_i^{r,s}(\P)$ for some $i\in\{1,2\}$ and $(r,s)\in\R$, let $\o'$ be obtained from $\o$ by replacing $\o_i$ with $\o_i^{r,s}$ and leaving the other two components unchanged. If it is $\delta_3(\P)$, let $\o' \ce \o$; note that this case is also included in \cref{lem::objective} by letting $i=1$ and $(r,s)=(0,0)$. In either case, let $W' \ce W_{\o'}$ and $\beta(v) \ce (1-W'(v))_+$.

We apply \cref{lem::shearer} with $I = V$, ordering $ \prec$ and the $U, \theta_U$ defined as follows\footnote{Formally, we extend $\o'$ to out of its support by $0$ and only include those sets $U$ with positive $\theta_U$.}.
\begin{itemize}
\item For every $U = \{x,y,z\}$ with distinct $x,y,z$, let $\theta_U = \o'(x,y,z) + \o'(x,z,y) + \o'(y,x,z) + \o'(y,z,x) + \o'(z,x,y) + \o'(z,y,x)$.
\item For every $U = \{x,2x\}$, let $\theta_U = \o'(x,x,2x)$.
\item For every $U = \{x\}$, let $\theta_U = \o'(x,x,2x) + \beta(x)$.
\end{itemize}
Then, for every $v \in V$, we have $\sum_{U:v\in U} \theta_U= W'(v) + \beta(v) \ge 1$. Hence, we can apply \cref{lem::shearer} and get
\begin{alignat*}{2}
H(( X_v)_{v\in V})
& \le {}&\phantom{{}+{}}\qquad&
\sum_{\mathclap{\substack{(x,y,z)\in\T\\
\o'(x,y,z)>0,\ x\ne y}}}
\o'(x,y,z)
H( X_x, X_y, X_z
\mid X_{<\{x,y,z\}})\\
&&{}+{}\qquad&
\sum_{\mathclap{\substack{v\in V\\
\o'(v,v,2v)>0}}}
\o'(v,v,2v)
\Bigl(
H( X_v, X_{2v}
\mid X_{<\{v,2v\}})
+H( X_v\mid X_{<v})
\Bigr)\\
&&{}+{}\qquad&
\sum_{\mathclap{v\in V}}
\beta(v)
H( X_v\mid X_{<v}).
\end{alignat*}

Let $\alpha,\kappa>0$ be the absolute constants given by \cref{prop::localEntropy}.

For every triple $(x,y,z)\in\T$ with $x\ne y$ and $\o'(x,y,z)>0$, the support properties of $\o'$ imply that $x,y,z$ are distinct. Since every realization of $S$ is sum-free and has profile $\P$, the random triple $( X_x, X_y, X_z)$ satisfies the hypotheses of \cref{prop::localEntropy}~(1), with source minima $\P(x),\P(y)$ and target maximum $\P(z)$. Hence, by \cref{prop::localEntropy}~(1),
\[
H( X_x, X_y, X_z \mid X_{<\{x,y,z\}})
 \le
H( X_x, X_y, X_z)
 \le
2n-\alpha|\P(z)-\P(x)-\P(y)|+\kappa.
\]

For every $(v,2v)$ with $\o'(v,v,2v)>0$, the support properties give $v\in B \cup C^-$, so $h(v)>0$ and hence $2v\prec v$. The random pair $( X_v, X_{2v})$ satisfies the hypotheses of \cref{prop::localEntropy}~(2), with source minimum $\P(v)$ and target maximum $\P(2v)$. We have
\[
H( X_v, X_{2v}
\mid X_{<\{v,2v\}})
+H( X_v\mid X_{<v}) \le
H( X_v, X_{2v})
+H( X_v\mid X_{2v})
 \le
2n-\alpha|\P(2v)-2\P(v)|+\kappa.
\]

For the singletons, we trivially have $H( X_v\mid X_{<v}) \le H( X_v) \le n+1$.

By our choice of $\o'$, we have
\[
\sum_{(x,y,z)\in\T}
\o'(x,y,z)|\P(z)-\P(x)-\P(y)|
 \ge \delta(\P).
\]
Indeed, if $\delta(\P)=\delta_i^{r,s}(\P)$, then the contribution of $\o_i^{r,s}$ to the left-hand side is exactly $\delta_i^{r,s}(\P)$ by definition; the case $\delta(\P)=\delta_3(\P)$ is immediate by definition. Also, we have $\sum_{(x,y,z)\in\T}\o'(x,y,z)=O_d(n^{d-1})$, which follows from the total-mass bounds in \cref{lem::w1Weights,lem::w2Weights,lem::w3Weights}, as a translated component has no larger total mass than the corresponding original component.

Thus, by \cref{lem::objective}, we have
\begin{align*}
\log_2N(\P)
& \le
\sum_{(x,y,z)\in\T}\o'(x,y,z)
\bigl(2n-\alpha|\P(z)-\P(x)-\P(y)|+\kappa\bigr)
+(n+1)\sum_{v\in V}\beta(v)\\
& \le
2(n+1)\sum_{(x,y,z)\in\T}\o'(x,y,z)
+(n+1)\sum_{v\in V}\beta(v)
-\alpha\delta(\P)
+O_d(n^{d-1})\\
& \le
M(Q)-\alpha\delta(\P)+O_d(n^{d-1}),
\end{align*}
Taking $\alpha_0=\alpha$ completes the proof.
\end{proof}

\section{Proof of the profile-count proposition}
\label{sec::profileCountProof}
In this section, we prove \cref{prop::profileCount}. Recall that in \cref{subsec::profileanddefect}, we define that for every profile, the lower part is the restriction to $A \cup B \cup C^-$, and the upper part is the restriction to $C^+ \cup D \cup E$. We prove \cref{prop::profileCount} by the following two lemmas dealing with these two parts separately.

\begin{lemma}[Lower profiles]\label{lem::lowerProfileCount}
For every $\eta>0$ and $R \ge 0$, the number of functions on $A \cup B \cup C^-$ that extend to at least one profile $\P$ satisfying $\delta(\P) \le R$ is at most
\[
2^{\eta R+O_{d,\eta}(n^{d-1})}.
\]
\end{lemma}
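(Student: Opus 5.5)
The plan is to use the translated defects to control second differences of the lower profile along lines parallel to $e_1$. We then reconstruct each line from two initial values and its sequence of second differences, and count these sequences with a weighted $\ell^1$ bound whose weights are the coordinate sums of \cref{lem::w1Weights,lem::w2Weights}.

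\emph{Step 1: isolating second differences.} Fix a triple $(x,y,z)\in\G_1$ (so every shift in $\R$ is valid). Expanding \cref{def::localDefect} gives two identities. The first is
\[
\delta_1^{e_1,e_1}-\delta_1^{e_1,0}-\delta_1^{0,e_1}+\delta_1^{0,0}=\P(z+2e_1)-2\P(z+e_1)+\P(z).
\]
The second is
\[
\delta_1^{2e_1,0}-2\delta_1^{e_1,0}+\delta_1^{0,0}=\bigl(\P(z+2e_1)-2\P(z+e_1)+\P(z)\bigr)-\bigl(\P(x+2e_1)-2\P(x+e_1)+\P(x)\bigr),
\]
where all $\delta$'s are evaluated at $(x,y,z)$. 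Subtracting them, the second difference $\Delta^2\P(x)\ce\P(x+2e_1)-2\P(x+e_1)+\P(x)$ at the source $x$ is a signed combination, with bounded coefficients, of the translated local defects at $(x,y,z)$. Using instead $(0,2e_1)$ and $(0,e_1)$ in the same way isolates $\Delta^2\P(y)$. The same computation applies verbatim to $\o_2$ on $\G_2$: for $x\in A$ it uses the first slot, and for $y\in C^-$ the second slot.

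Multiplying by the weights and summing, the triangle inequality gives
\[
\sum_{v\in B}\mu_1(v)\,|\Delta^2\P(v)|+\sum_{v\in A}\mu_A(v)\,|\Delta^2\P(v)|+\sum_{v\in C^-}\mu_-(v)\,|\Delta^2\P(v)|\le K_0\,\delta(\P)\le K_0R
\]
for an absolute constant $K_0$. Here $\mu$ denotes the good coordinate sums, for instance $\mu_1(v)=\sum_{(v,y,z)\in\G_1}\o_1(v,y,z)$ plus the analogous sum over the second slot.

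\emph{Step 2: identifying the bad points.} By \cref{lem::w1Weights,lem::w2Weights}, the full coordinate sums are $1/2+O(n^{-1})$ on $B$, equal to $1$ on $A$ except on two level sets of $h$, and equal to $\lambda_-(v)$ on $C^-$. By \cref{equ::commonValidMass}, the bad mass is $O_d(n^{d-2})$. Call $v$ \emph{bad} if its good coordinate sum is less than half of its full coordinate sum. Bad points arise in three ways:
\begin{itemize}
\item the full coordinate sum vanishes, which happens on $O_d(n^{d-2})$ points;
\item the point lies within distance $2$ of a region boundary or of the boundary of $V$ in the $e_1$ direction;
\item the point loses at least half its mass to bad triples.
\end{itemize}
I expect the number of bad points to be $O_d(n^{d-2})$. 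Points of the third kind require care on $C^-$, where $\lambda_-$ can be small, so Markov's inequality must be applied relative to $\lambda_-(v)$. To handle this I would bound bad mass per level set of $h$, using that $\lambda_-$ depends only on $h$. This is the step I expect to be the main obstacle.

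\emph{Step 3: counting.} Partition $A\cup B\cup C^-$ into maximal segments of lines parallel to $e_1$ that avoid bad points, and also cut each segment whenever it changes region. Every bad point and every cut creates at most $O(1)$ new segments, so there are $O_d(n^{d-2})$ segments plus the $O_d(n^{d-2})$ lines themselves. On each segment, the profile is determined by its first two values, at most $(n+2)^2$ choices, together with its second differences. Values at bad points are also chosen freely. This costs $2^{O_d(n^{d-2}\log n)}=2^{O_d(n^{d-1})}$.

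The integer vectors $(k_v)$ of second differences satisfy $\sum_v w_v|k_v|\le K_0R$ with $w_v\ge\tfrac14$ on $A\cup B$ and $w_v\ge\lambda_-(v)/2$ on $C^-$. For $\theta=\eta/K_0$, the exponential moment bound gives
\[
\#\{(k_v)\}\le 2^{\theta K_0R}\prod_v\sum_{k\in\Zs}2^{-\theta w_v|k|}\le 2^{\eta R}\prod_v\Bigl(1+\frac{2}{\theta w_v\ln 2}\Bigr).
\]
The logarithm of the product is $O_\eta\bigl(\sum_v\log_2(2+1/w_v)\bigr)$. This is $O_{d,\eta}(n^{d-1})$, since $|A\cup B|\le n^{d-1}$ handles the first two regions and \cref{equ::lambdaLogSum} handles $C^-$. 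Combining the three factors yields the claimed bound $2^{\eta R+O_{d,\eta}(n^{d-1})}$.
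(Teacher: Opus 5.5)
The overall plan matches the paper's. Subtracting your two identities gives $\Delta_{e_1}^2\P(x)=\delta_1^{e_1,e_1}+\delta_1^{e_1,0}-\delta_1^{0,e_1}-\delta_1^{2e_1,0}$, which is exactly \cref{lem::translationCancellations}. The paper likewise reconstructs each $e_1$-line from two initial values plus second differences, and counts by an exponential moment weighted by $\lambda_-$, using \cref{equ::lambdaLogSum}.

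\textbf{The gap: Step~2.} You leave this step open, and the bound you expect there does not follow from what is recorded. On $C^-$ your threshold is $\lambda_-(v)/2$. On the top levels of $C^-$ the first-source level $h(x)$ is $O(1)$, so there $\lambda_-(v)$ is of order $n^{-(d-2)}$. \Cref{equ::commonValidMass} only says that the total bad mass is $O_d(n^{d-2})$. Markov's inequality applied to that cannot rule out that many whole levels of $C^-$, each with $\Theta(n^{d-2})$ points, are bad. To get $O_d(n^{d-2})$ bad points you would have to reopen the Keevash--Lim couplings. Their uniform slice marginals make only an $O(1/n)$ fraction of each component bad at every non-boundary level, but you did not supply this.

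\textbf{A fix within your framework.} Step~3 only needs $\#\{\text{bad}\}\cdot\log n=O_d(n^{d-1})$, and this weaker bound does follow from the recorded facts. Choose a threshold $\tau$.
\begin{itemize}
\item Bad points with $\lambda_-(v)\ge\tau$ number $O_d(n^{d-2})/\tau$, by Markov.
\item All points with $\lambda_-(v)<\tau$ number $O_d(n^{d-1})/\log_2(1/\tau)$, by \cref{equ::lambdaLogSum}.
\end{itemize}
Taking $\tau=(\log_2 n)/n$ gives $O_d(n^{d-1}/\log n)$ bad points in total.

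\textbf{What the paper does instead.} The paper never identifies bad points, and this is the simpler idea you missed. The lower part of a profile takes values in $[0,n+1]$, so always $|\Delta_{e_1}^2\P(v)|\le 2n+2$. Hence nothing needs to be discarded. Weight every admissible $v$ by its \emph{full} coordinate sum, split the triples into $\G_i$ and $\B_i$, and bound the bad part trivially:
\[
\sum_{\substack{v\in C^-\\ v+e_1,v+2e_1\in C^-}}\lambda_-(v)\,|\Delta_{e_1}^2\P(v)|\le\sum_{(x,y,z)\in\G_2}\o_2(x,y,z)\,|\Delta_{e_1}^2\P(y)|+(2n+2)\sum_{(x,y,z)\in\B_2}\o_2(x,y,z)\le 4\delta(\P)+O_d(n^{d-1}).
\]
The analogous bounds hold on $B$ (weight at least $1/3$) and on $A_1$ (weight $1$); this is \cref{lem::profileSecondDifferences}. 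The additive $O_d(n^{d-1})$ only costs a factor $2^{\theta\Gamma_d n^{d-1}}$ in the exponential-moment count. Consequently each line meets each of $A_1$, $B$, $C^-$ in a single interval, rebuilt from two initial values. Apart from these initial values, the only freely chosen values lie on the two levels of $A\sm A_1$, and no segment cutting is needed.
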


\begin{lemma}[Upper profiles]\label{lem::upperProfileCount}
Fix the restriction of a profile to $A \cup B \cup C^-$. For every $\eta>0$ and $R \ge 0$, the number of functions on $C^+ \cup D \cup E$ completing it to a profile $\P$ with $\delta(\P) \le R$ is at most
\[
2^{\eta R+O_{d,\eta}(n^{d-1})}.
\]
The estimate is uniform over the fixed lower restriction.
\end{lemma}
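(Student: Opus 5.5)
The plan is to exploit that every target of an unshifted weight component lies in the upper part, while both of its sources lie in the lower part, which is fixed. By \cref{lem::KLWeights}, $\supp(\o_1)\subseteq\T(B,B,D)$, $\supp(\o_2)\subseteq\T(A,C^-,C^+)$ and $\supp(\o_3)\subseteq\T(C^-,C^-,E)$. So each value $\P(z)$ with $z\in C^+\cup D\cup E$ is compared, through the local defects, only with sums $\P(x)+\P(y)$ that are already known. I would use only the unshifted defects. Since $\supp(\o_i)$ lies in the relevant triple set, the shift $(0,0)$ is valid on every supported triple, and
\[
\delta_1^{0,0}(\P)+\delta_2^{0,0}(\P)+\delta_3(\P)\le 3\delta(\P)\le 3R.
\]
Regrouping these sums by target, set $r(z)\ce\sum_{x,y}\o(x,y,z)\,|\P(z)-\P(x)-\P(y)|$ for each upper vertex $z$, where only the component $\o_i$ whose targets lie in the region of $z$ contributes. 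Then $\sum_z r(z)\le 3R$.

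\textbf{Step 1: reduce each target to a weighted median.} For an upper vertex $z$, let $\mu_z(x,y)\ce\o(x,y,z)$. Its total mass is $\lambda(z)\ce\sum_{x,y}\mu_z(x,y)$. By \cref{lem::w1Weights} we have $\lambda(z)=1$ on $D$. By \cref{lem::w3Weights} we have $\lambda(z)=1$ on $E$ when $h(z)\ge 2m+2$. On $C^+$ we have $\lambda(z)=\lambda_+(z)\in(0,1]$ by \cref{lem::w2Weights}. Let $p_z$ be a weighted median of the multiset $\{\P(x)+\P(y)\}$ under $\mu_z$; it depends only on the fixed lower restriction. If $t=\P(z)>p_z$, then mass at least $\lambda(z)/2$ sits on pairs with $\P(x)+\P(y)\le p_z$, and symmetrically for $t<p_z$. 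Hence
\[
r(z)\ge\tfrac{\lambda(z)}{2}\,|\P(z)-p_z|,
\]
and therefore $\sum_z\lambda(z)|\P(z)-p_z|\le 6R$.

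\textbf{Step 2: count via a generating-function (Chernoff) bound.} Let $Z_0$ be the set of upper vertices with $\lambda(z)>0$. For any $\theta>0$, the number of integer vectors $(t_z)_{z\in Z_0}$ with $\sum_z\lambda(z)|t_z-p_z|\le 6R$ is at most
\[
2^{6\theta R}\prod_{z\in Z_0}\sum_{k\in\Zs}2^{-\theta\lambda(z)|k|}
\le 2^{6\theta R}\prod_{z\in Z_0}O_\theta\Bigl(2+\frac{1}{\lambda(z)}\Bigr).
\]
Take $\theta=\eta/6$. On $D$ and on the part of $E$ with weight $1$, each factor is $O_\eta(1)$, and there are $O_d(n^{d-1})$ such vertices. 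On $C^+$ the product of the factors is $2^{O_{d,\eta}(n^{d-1})}$ by \cref{equ::lambdaLogSum}. The remaining vertices are those of $E$ with $h(z)=2m+1$, where $\lambda(z)=0$. There are only $O_d(n^{d-2})$ of them, and each takes one of $n+2$ values, so they cost $2^{O_d(n^{d-2}\log n)}=2^{O_d(n^{d-1})}$. Multiplying the three contributions gives $2^{\eta R+O_{d,\eta}(n^{d-1})}$. Nothing in this count depends on the particular lower restriction, so the bound is uniform over it.

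\textbf{Main obstacle.} I expect the only delicate point to be bookkeeping. First, one must check that each upper vertex is a target of exactly one component, so that the per-target regrouping is legitimate. Second, one must check that the weight-zero and small-$\lambda$ targets are absorbed: this is exactly what \cref{equ::lambdaLogSum} and the thin layer $h=2m+1$ provide. The median inequality in Step 1 and the generating-function count in Step 2 are routine.
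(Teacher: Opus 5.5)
Your proposal is correct and follows essentially the same route as the paper. You regroup the unshifted defects $\delta_1^{0,0}(\P)+\delta_2^{0,0}(\P)+\delta_3(\P)\le 3R$ by target, bound each target's contribution below by $\frac{\lambda(z)}{2}|\P(z)-c_z|$ for a center $c_z$ fixed by the lower part, and count with the same exponential-moment argument at $\theta=\eta/6$, using \cref{equ::lambdaLogSum} on $C^+$ and counting the layer $h(z)=2m+1$ freely. The only cosmetic difference is that your $c_z$ is a weighted median, while the paper takes a minimizer of $\psi_z$ and gets the same bound from $\psi_z(t_1)+\psi_z(t_2)\ge\lambda(z)|t_1-t_2|$.
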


\begin{proof}[Proof of \cref{prop::profileCount} assuming \cref{lem::lowerProfileCount,lem::upperProfileCount}]
Fix $R \ge 0$ and apply both lemmas with $\eta=\eps/2$. There are at most $2^{\eps R/2+O_{d,\eps}(n^{d-1})}$ possible lower parts. Uniformly for each such lower part, there are at most $2^{\eps R/2+O_{d,\eps}(n^{d-1})}$ possible upper parts. Multiplying these two bounds gives
\[
\bigl|\{\P : \delta(\P) \le R\}\bigr|
 \le 2^{\eps R+O_{d,\eps}(n^{d-1})}. \qedhere
\]
\end{proof}

For \cref{lem::lowerProfileCount}, a signed combination of four translated defects cancels the target term and one source term and isolates a second difference on the remaining source. Along each line parallel to $e_1$, up to two initial values together with all second differences determine the profile, so these bounds allow us to count the lower part. For \cref{lem::upperProfileCount}, once the lower part is fixed, the unshifted defects give a separate weighted bound at each point of $C^+$ and $D$, and at each point of $E$ with positive target coordinate sum; the remaining $O_d(n^{d-2})$-point boundary layer of $E$ is counted freely.

We remark that this cancellation-and-reconstruction argument is inspired by Ghosal's two-dimensional argument in Section~3.5, especially Definition~3.24 and Lemmas~3.26 and~3.27, in~\cite{ghosal2025number}. We make this method work in every fixed dimension $d \ge 3$ using the nonuniform Keevash--Lim weights and bounded translations, while controlling the boundary losses created by these translations.
\subsection{Proof of the lower-profile lemma}

Given a function $f$, for every $v\in V$ for which
$f(v)$, $f(v+e_1)$, and $f(v+2e_1)$ are defined, let
\[
\Delta_{e_1}^2f(v)
\ce
f(v+2e_1)-2f(v+e_1)+f(v).
\] 
The sets $\G_1$ and $\G_2$ defined in \cref{subsec::transWeights} ensure that all shifted defects used below are defined on the same original triple. Recall that we define in \cref{def::localDefect} the local defects $\delta_1^{r,s}(x,y,z)$, $\delta_2^{r,s}(x,y,z)$, and $\delta_3(x,y,z)$.

\begin{lemma}[Cancellation identities]
\label{lem::translationCancellations}
Fix a profile $\P$. For every $(x,y,z) \in \G_1$, we have
\[
\Delta_{e_1}^2\P(x)=-\delta_1^{2e_1,0}(x,y,z)+\delta_1^{e_1,e_1}(x,y,z)+\delta_1^{e_1,0}(x,y,z)-\delta_1^{0,e_1}(x,y,z).
\]
For every $(x,y,z) \in \G_2$, we have
\begin{align*}
\Delta_{e_1}^2\P(x) &=-\delta_2^{2e_1,0}(x,y,z)+\delta_2^{e_1,e_1}(x,y,z)+\delta_2^{e_1,0}(x,y,z)-\delta_2^{0,e_1}(x,y,z), \\
\Delta_{e_1}^2\P(y) &=-\delta_2^{0,2e_1}(x,y,z)+\delta_2^{e_1,e_1}(x,y,z)+\delta_2^{0,e_1}(x,y,z)-\delta_2^{e_1,0}(x,y,z).
\end{align*}
\end{lemma}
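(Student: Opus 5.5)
The identities are pure algebra once every term is known to be defined. The plan is first to check well-definedness, then to expand each local defect by its definition and watch the target and the other source cancel.

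\emph{Well-definedness.} Since $(x,y,z)\in\G_1=\bigcap_{(r,s)\in\R}\G_1^{r,s}$, for every $(r,s)\in\R$ the shifted triple $(x+r,y+s,z+r+s)$ lies in $\T(B,B,D)$. Hence each $\delta_1^{r,s}(x,y,z)$ appearing is defined by \cref{def::localDefect}. In particular the unshifted triple (with $(r,s)=(0,0)$) is in $\T(B,B,D)$ as well. The same holds for $\G_2$ with $\T(A,C^-,C^+)$. Also, $x,x+e_1,x+2e_1$ all lie in $V$, so $\Delta_{e_1}^2\P(x)$ is defined, and similarly for $y$ in the $\G_2$ case.

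\emph{Expansion for the first identity.} Write $p_{r,s}\ce \P(z+r+s)$ and use $\delta^{r,s}=\P(z+r+s)-\P(x+r)-\P(y+s)$. The four terms are:
\begin{align*}
-\delta^{2e_1,0}&=-\P(z+2e_1)+\P(x+2e_1)+\P(y),\\
\delta^{e_1,e_1}&=\P(z+2e_1)-\P(x+e_1)-\P(y+e_1),\\
\delta^{e_1,0}&=\P(z+e_1)-\P(x+e_1)-\P(y),\\
-\delta^{0,e_1}&=-\P(z+e_1)+\P(x)+\P(y+e_1).
\end{align*}
Summing, the $z$-terms cancel in pairs: $\P(z+2e_1)$ between the first two lines, and $\P(z+e_1)$ between the last two. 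The $y$-terms also cancel: $\P(y)$ between the first and third lines, and $\P(y+e_1)$ between the second and fourth. What remains is $\P(x+2e_1)-2\P(x+e_1)+\P(x)=\Delta_{e_1}^2\P(x)$. This computation uses only the defining formula, which is the same for $\delta_1$ and $\delta_2$, so it gives both the $\G_1$ identity and the first $\G_2$ identity verbatim.

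\emph{Second $\G_2$ identity.} This is the same computation with the roles of the two sources swapped: $(r,s)\mapsto(s,r)$ exchanges $x$ and $y$ while $z+r+s$ is unchanged. Applying the swap to the four-term combination gives $-\delta^{0,2e_1}+\delta^{e_1,e_1}+\delta^{0,e_1}-\delta^{e_1,0}$, which telescopes to $\Delta_{e_1}^2\P(y)$.

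There is no real obstacle. The only point needing care is confirming that every shift used lies in $\R$ and is valid, which is exactly what membership in the common good set $\G_i$ guarantees.
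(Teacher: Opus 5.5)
Your proposal is correct and follows the paper's proof. Both arguments note that membership in the common good set $\G_i$ makes all the shifted defects defined, then expand by definition and see the target terms and the other source's terms cancel. The differences are cosmetic: you state well-definedness more explicitly, reuse the first expansion for the $\G_2$ case, and get the $y$-identity from the formal symmetry $\delta^{r,s}\leftrightarrow\delta^{s,r}$ under swapping $x$ and $y$, whereas the paper writes out all three expansions in full.
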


\begin{proof}
For $(x,y,z) \in \G_1$, all four defects in the first identity are defined on the same original triple. Expanding them by definition gives
\begin{align*}
 &-\delta_1^{2e_1,0}(x,y,z)+\delta_1^{e_1,e_1}(x,y,z)
   +\delta_1^{e_1,0}(x,y,z)-\delta_1^{0,e_1}(x,y,z)\\
 &=-\bigl[\P(z+2e_1)-\P(x+2e_1)-\P(y)\bigr]
   +\bigl[\P(z+2e_1)-\P(x+e_1)-\P(y+e_1)\bigr]\\
 &\quad+\bigl[\P(z+e_1)-\P(x+e_1)-\P(y)\bigr]
   -\bigl[\P(z+e_1)-\P(x)-\P(y+e_1)\bigr]\\
 &=\P(x+2e_1)-2\P(x+e_1)+\P(x)
 =\Delta_{e_1}^2\P(x).
\end{align*}
For $(x,y,z) \in \G_2$,
\begin{align*}
 &-\delta_2^{2e_1,0}(x,y,z)+\delta_2^{e_1,e_1}(x,y,z)
   +\delta_2^{e_1,0}(x,y,z)-\delta_2^{0,e_1}(x,y,z)\\
 &=-\bigl[\P(z+2e_1)-\P(x+2e_1)-\P(y)\bigr]
   +\bigl[\P(z+2e_1)-\P(x+e_1)-\P(y+e_1)\bigr]\\
 &\quad+\bigl[\P(z+e_1)-\P(x+e_1)-\P(y)\bigr]
   -\bigl[\P(z+e_1)-\P(x)-\P(y+e_1)\bigr]\\
 &=\P(x+2e_1)-2\P(x+e_1)+\P(x)
 =\Delta_{e_1}^2\P(x),
\end{align*}
and
\begin{align*}
 &-\delta_2^{0,2e_1}(x,y,z)+\delta_2^{e_1,e_1}(x,y,z)
   +\delta_2^{0,e_1}(x,y,z)-\delta_2^{e_1,0}(x,y,z)\\
 &=-\bigl[\P(z+2e_1)-\P(x)-\P(y+2e_1)\bigr]
   +\bigl[\P(z+2e_1)-\P(x+e_1)-\P(y+e_1)\bigr]\\
 &\quad+\bigl[\P(z+e_1)-\P(x)-\P(y+e_1)\bigr]
   -\bigl[\P(z+e_1)-\P(x+e_1)-\P(y)\bigr]\\
 &=\P(y+2e_1)-2\P(y+e_1)+\P(y)
 =\Delta_{e_1}^2\P(y). \qedhere
\end{align*}
\end{proof}

Let
\[
A_1 \ce \{v \in A : 1 \le h(v) \le m-n-2\}.
\]
By \cref{lem::w2Weights}, the total $\o_2$-weight at the first source equals $1$ precisely on $A_1$. The complement $A \sm A_1$ consists of the two layers $h(v)=0$ and $h(v)=m-n-1$, and hence has size $O_d(n^{d-2})$.

\begin{lemma}[Second differences of the lower profile]
\label{lem::profileSecondDifferences}
For every profile $\P$,
\begin{equation} \label{equ::lowerSecondDifferences}
\begin{aligned}
 \sum_{\substack{v:v \in B\\v+e_1,v+2e_1 \in B}}
 |\Delta_{e_1}^2\P(v)|
 &=O_d(\delta(\P)+n^{d-1}),\\
 \sum_{\substack{v:v \in A_1\\v+e_1,v+2e_1 \in A_1}}
 |\Delta_{e_1}^2\P(v)|
 &=O_d(\delta(\P)+n^{d-1}),\\
 \sum_{\substack{v:v \in C^-\\v+e_1,v+2e_1 \in C^-}}
 \lambda_-(v)|\Delta_{e_1}^2\P(v)|
 &=O_d(\delta(\P)+n^{d-1}).
\end{aligned}.
\end{equation}
\end{lemma}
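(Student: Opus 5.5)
The plan is to multiply each cancellation identity of \cref{lem::translationCancellations} by the corresponding weight, sum over the common good sets $\G_1$ and $\G_2$, and then compare the resulting weighted sums of $|\Delta_{e_1}^2\P|$ with the unweighted (or $\lambda_-$-weighted) sums in \cref{equ::lowerSecondDifferences} through the coordinate-sum lemmas.

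\textbf{Region $B$.} For every $(x,y,z)\in\G_1$, the first identity of \cref{lem::translationCancellations} and the triangle inequality give
\[
|\Delta_{e_1}^2\P(x)|\le \sum_{(r,s)\in\R}|\delta_1^{r,s}(x,y,z)|.
\]
Since $\G_1\subseteq\G_1^{r,s}$ for every shift, multiplying by $\o_1(x,y,z)$ and summing over $\G_1$ bounds the weighted sum by $\sum_{(r,s)}\delta_1^{r,s}(\P)\le 6\delta(\P)$. On the left we obtain
\[
\sum_{x}|\Delta_{e_1}^2\P(x)|\sum_{y,z}\o_1(x,y,z)\mathbf 1_{(x,y,z)\in\G_1}.
\]
Fix $x$ with $x,x+e_1,x+2e_1\in B$. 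By \cref{lem::w1Weights} the full first-coordinate sum of $\o_1$ at $x$ is $\frac12+O_d(n^{-1})$. Let $b(x)$ denote the $\o_1$-mass of triples in $\B_1$ with first coordinate $x$. Then the inner sum is at least $\frac12-O_d(n^{-1})-b(x)$, so
\[
\frac13\sum_x|\Delta_{e_1}^2\P(x)| \le 6\delta(\P)+\sum_x b(x)\,|\Delta_{e_1}^2\P(x)|.
\]
Profile values lie in $[-1,n+1]$, so $|\Delta_{e_1}^2\P|\le 4(n+2)$. Together with \cref{equ::commonValidMass}, $\sum_x b(x)=O_d(n^{d-2})$, so the last term is $O_d(n^{d-1})$. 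This proves the first line.

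\textbf{Regions $A_1$ and $C^-$.} The argument is the same, using $\o_2$ and the two identities for $\G_2$. For $A_1$ we use the first-source sum, which equals $1$ exactly on $A_1$ by \cref{lem::w2Weights}, and subtract the $\B_2$-mass as before. For $C^-$ we use the second identity with $y$ as the variable. The second-source sum at $y$ is $\lambda_-(y)$, and the bad mass is again $O_d(n^{d-2})$ by \cref{equ::commonValidMass}, costing $O_d(n^{d-1})$ after multiplying by $4(n+2)$. So the weighted sum $\sum_y\lambda_-(y)|\Delta_{e_1}^2\P(y)|$ is at most $6\delta(\P)+O_d(n^{d-1})$ directly, with no lower bound on $\lambda_-$ needed.

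\textbf{Main obstacle.} The delicate point is the bookkeeping of the bad triples. When $(x,y,z)\in\B_i$ is discarded, the coordinate-sum identity only survives up to the bad mass at that point, and this loss must be controlled in total rather than pointwise. The pointwise bound $|\Delta_{e_1}^2\P|=O(n)$ combined with \cref{equ::commonValidMass} does exactly this, yielding an $O_d(n^{d-1})$ error.
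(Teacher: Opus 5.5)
Your proposal is correct and follows the paper's proof essentially step for step. You multiply the cancellation identities by $\o_1$ or $\o_2$ and sum over $\G_1$ or $\G_2$ to get a bound by a constant multiple of $\delta(\P)$. You then use the coordinate sums (at least $1/3$ on $B$, exactly $1$ on $A_1$, and $\lambda_-$ on $C^-$) and absorb the bad triples via \cref{equ::commonValidMass} together with the trivial $O(n)$ bound on $|\Delta_{e_1}^2\P|$. The only differences are cosmetic: you sum over all six shifts, which gives $6\delta(\P)$ instead of $4\delta(\P)$, and you track the bad mass pointwise through $b(x)$ rather than globally.
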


\begin{proof}
Since $\G_i \subseteq \G_i^{r,s}$ for every $(r,s) \in \R$, multiplying the three identities in \cref{lem::translationCancellations} by the original weights and taking the sum gives
\begin{align}
\sum_{(x,y,z) \in \G_1}
\o_1(x,y,z)|\Delta_{e_1}^2\P(x)|
& \le 4\delta(\P),\label{equ::sumxyzG1o1Delta}\\
\sum_{(x,y,z) \in \G_2}
\o_2(x,y,z)|\Delta_{e_1}^2\P(x)|
& \le 4\delta(\P),\label{equ::sumxyzG2o2DeltaPx}\\
\sum_{(x,y,z) \in \G_2}
\o_2(x,y,z)|\Delta_{e_1}^2\P(y)|
& \le 4\delta(\P). \label{equ::sumxyzG2o2DeltaPy}
\end{align}
Indeed, we have
\[
\sum_{(x,y,z) \in \G_1} \o_1(x,y,z) \big|\delta_1^{2e_1,0}(x,y,z)\big|
 \le
\delta_1^{2e_1,0}(\P) \le \delta(\P),
\]
by definition and similar arguments hold for all other terms.

For the first estimate in \cref{equ::lowerSecondDifferences}, by \cref{equ::w1CoordinateSums}, we have  $\sum_{y,z \in V}\o_1(v,y,z) \ge 1/3$ and $\sum_{x,z \in V}\o_1(x,v,z) \ge 1/3$. Also, note that trivially $|\Delta_{e_1}^2\P(v)| \le 2n+2$ for every $v \in B$. By \cref{equ::sumxyzG1o1Delta,equ::commonValidMass}, We have
\begin{align*}
\frac{1}{3}
\sum_{\substack{v: v \in B\\v+e_1,v+2e_1 \in B}}
|\Delta_{e_1}^2\P(v)|
& \le
\sum_{\substack{(x,y,z) \in \supp(\o_1)\\
                  x \in B,\ x+e_1,x+2e_1 \in B}}
\o_1(x,y,z)|\Delta_{e_1}^2\P(x)|\\
& \le
\sum_{(x,y,z) \in \G_1}
\o_1(x,y,z)|\Delta_{e_1}^2\P(x)|+(2n+2)
\sum_{(x,y,z) \in \B_1}
\o_1(x,y,z)\\
&=O_d(\delta(\P)+n^{d-1}).
\end{align*}

For the second estimate in \cref{equ::lowerSecondDifferences}, by \cref{lem::w2Weights}, we have $\sum_{y,z \in V}\o_2(v,y,z) = 1$ for every $v \in A_1$. The same argument gives
\begin{align*}
\sum_{\substack{v:v \in A_1\\v+e_1,v+2e_1 \in A_1}}
|\Delta_{e_1}^2\P(v)|
& \le
\sum_{(x,y,z) \in \G_2}
\o_2(x,y,z)|\Delta_{e_1}^2\P(x)|+(2n+2)
\sum_{(x,y,z) \in \B_2}
\o_2(x,y,z)\\
&=O_d(\delta(\P)+n^{d-1}).
\end{align*}

For the third estimate in \cref{equ::lowerSecondDifferences}, by definition, we have $\lambda_-(v) = \sum_{x,z \in V}\o_2(x,v,z)$ for every $v \in C^-$. Hence, by \cref{lem::w2Weights},
\begin{align*}
\sum_{\substack{v:v \in C^-\\v+e_1,v+2e_1 \in C^-}}
\lambda_-(v)|\Delta_{e_1}^2\P(v)|
& \le
\sum_{(x,y,z) \in \G_2}
\o_2(x,y,z)|\Delta_{e_1}^2\P(y)|
+(2n+2)\sum_{(x,y,z) \in \B_2}
\o_2(x,y,z)\\
&=O_d(\delta(\P)+n^{d-1}). \qedhere
\end{align*}
\end{proof}

\begin{proof}[Proof of \cref{lem::lowerProfileCount}]
Fix $\eta > 0$ and $R \ge 0$. For every $u=(u_2,\ldots,u_{d-1}) \in [0,n]^{d-2}$, let
\[
L_u \ce \{(t,u_2,\ldots,u_{d-1}) : t \in [0,n]\}.
\]
These $(n+1)^{d-2}$ lines parallel to $e_1$ partition $V$. Since $A_1$, $B$, and $C^-$ are each defined by an interval of values of $h$, their intersections with every $L_u$ are intervals, possibly empty. On each nonempty interval, any function $f$ is determined by its first two values and all its second differences $\Delta_{e_1}^2f(v)$, as
\[
f(v+2e_1)=2f(v+e_1)-f(v)+\Delta_{e_1}^2f(v).
\]
The case $d=3$ is illustrated in \cref{fig::profileLineEncoding}.

\begin{figure}[ht]
\centering
\begin{tikzpicture}[
  x=1.15cm,y=1.15cm,
  line cap=round,line join=round,
  every node/.style={font=\small}
]
  \fill[black!18] (0,0)--(1.2,0)--(0,1.2)--cycle;
  \fill[black!5]
    (1.2,0)--(6,0)--(6,0.8)--(0.8,6)--(0,6)--(0,1.2)--cycle;
  \fill[black!12]
    (6,0.8)--(6,1.8)--(1.8,6)--(0.8,6)--cycle;

  \draw[semithick] (0,0) rectangle (6,6);
  \draw (0,1.2)--(1.2,0);
  \draw (6,0.8)--(0.8,6);
  \draw (6,1.8)--(1.8,6);

  \draw[-{Latex[length=2mm]},semithick]
    (0.08,0.35)--(0.78,0.35);
  \draw[-{Latex[length=2mm]},semithick]
    (0.08,2.60)--(4.12,2.60)
    node[midway,above] {$e_1$};
  \draw[-{Latex[length=2mm]},semithick]
    (2.17,4.70)--(3.03,4.70);

  \fill (0.18,0.35) circle (1.5pt);
  \fill (0.38,0.35) circle (1.5pt);
  \fill (0.18,2.60) circle (1.5pt);
  \fill (0.38,2.60) circle (1.5pt);
  \fill (2.27,4.70) circle (1.5pt);
  \fill (2.47,4.70) circle (1.5pt);

  \node at (0.28,0.58) {$A_1$};
  \node at (2.85,2.10) {$B$};
  \node at (3.25,4.05) {$C^-$};
  \node[anchor=north east] at (5.92,5.92) {$V$};
\end{tikzpicture}
\caption{Schematic picture for $d=3$. The profile on each line can be determined from the first two values and the second difference.}
\label{fig::profileLineEncoding}
\end{figure}

There are $(n+1)^{d-2}$ choices for $u \in [0,n]^{d-2}$. For each $u$, there are at most three intervals, corresponding to $A_1,B,C^-$, respectively. Each interval requires at most two initial profile values, and every such value has at most $n+2$ choices. Also, the values on $A \sm A_1$ may be chosen freely, since $|A \sm A_1|=O_d(n^{d-2})$ and each point has at most $n+2$ choices. Putting all these together, they contribute to at most $2^{O_d(n^{d-2}\log n)}$ choices.

It remains to count the second-difference.  By \cref{equ::lowerSecondDifferences}, there is a constant $\Gamma_d>0$ such that each of the three sums in \cref{equ::lowerSecondDifferences} is at most $\Gamma_d(\delta(\P)+n^{d-1})$. Let $\theta \ce \eta / (3\Gamma_d)$. Let $B' \ce \{v\in B: v+e_1,v+2e_1 \in B\}$, $A' \ce \{v\in A_1: v+e_1,v+2e_1 \in A_1\}$, and $C' \ce \{v\in C^-: v+e_1,v+2e_1 \in C^-\}$. Let $\F_{B'}$, $\F_{A'}$, and $\F_{C'}$ be the families of arrays
\[
\bigl(\Delta_{e_1}^2\P(v)\bigr)_{v\in B'},\qquad
\bigl(\Delta_{e_1}^2\P(v)\bigr)_{v\in A'},\qquad
\bigl(\Delta_{e_1}^2\P(v)\bigr)_{v\in C'},
\]
respectively, that arise from profiles $\P$ on $V$ with $\delta(\P) \le R$.

First, consider $\F_{B'}$. By the first estimate in \cref{equ::lowerSecondDifferences}, for every profile $\P$ with $\delta(\P) \le R$, we have
\[
\sum_{v \in B'}|\Delta_{e_1}^2\P(v)| \le \Gamma_d(R+n^{d-1}),
\]
so for every $f \in \F_{B'}$, we have
\[
    2^{\theta \left( \Gamma_d(R+n^{d-1}) - \sum_{v \in B'} |f(v)|  \right)} \ge 1.
\]
Hence,
\[
    \left| \F_{B'} \right|
    =
    \sum_{f \in \F_{B'}} 1
 \le
    \sum_{f \in \F_{B'}} 2^{\theta \left( \Gamma_d(R+n^{d-1}) - \sum_{v \in B'} |f(v)|  \right)}
    =
    2^{\theta \left( \Gamma_d(R+n^{d-1})\right)}
    \cdot
    \sum_{f \in \F_{B'}}
     \prod_{v \in B'} 2^{-\theta|f(v)|}.
\]
For every $v \in B'$, the value $f(v)$ is an integer. We have
\[
\sum_{f \in \F_{B'}}
     \prod_{v \in B'} 2^{-\theta|f(v)|}
 \le
\prod_{v \in B'} \sum_{i \in \Zs} 2^{-\theta |i|} = \prod_{v \in B'} O_{d,\eta}(1) = 2^{O_{d,\eta} (n^{d-1})}.
\]
Therefore, we have
\[
|\F_{B'}| \le 2^{\theta \left( \Gamma_d(R+n^{d-1})\right)} \cdot 2^{O_{d,\eta} (n^{d-1})}
=
2^{\theta\Gamma_d R  + O_{d,\eta} (n^{d-1})}
=
2^{\eta R/3  + O_{d,\eta} (n^{d-1})}.
\]

For $\F_{A'}$, similar computation gives
\[
|\F_{A'}| \le 2^{\eta R/3  + O_{d,\eta} (n^{d-1})}.
\]

For $\F_{C'}$, the third estimate in \cref{equ::lowerSecondDifferences} gives, for every $f\in\F_{C'}$,
\[
\sum_{v\in C'}\lambda_-(v)|f(v)|
 \le
\Gamma_d(R+n^{d-1}),
\]
so
\[
2^{\theta\left(
\Gamma_d(R+n^{d-1})-
\sum_{v\in C'}\lambda_-(v)|f(v)|
\right)}
 \ge 1.
\]
Hence,
\[
|\F_{C'}|
=
\sum_{f\in\F_{C'}}1
 \le
2^{\theta\Gamma_d(R+n^{d-1})}
\sum_{f\in\F_{C'}}
\prod_{v\in C'}2^{-\theta\lambda_-(v)|f(v)|}
 \le
2^{\theta\Gamma_d(R+n^{d-1})}
\prod_{v\in C'}
\sum_{i\in\Zs}2^{-\theta\lambda_-(v)|i|}.
\]
Note that for every $0<\lambda \le 1$, we have $1-2^{-\theta\lambda} \ge \lambda(1-2^{-\theta})$ by the concavity of $u\mapsto1-2^{-\theta u}$ on $[0,1]_{\Rs}$, so
\[
\sum_{i\in\Zs}2^{-\theta\lambda|i|}
=
\frac{1+2^{-\theta\lambda}}{1-2^{-\theta\lambda}}
 \le
\frac{2}{\lambda(1-2^{-\theta})}.
\]
Moreover, by \cref{equ::lambdaLogSum},
\[
\log_2\bigg(
\prod_{v\in C'}
\frac{2}{\lambda_-(v)(1-2^{-\theta})}
\bigg)
=
|C'|\log_2\frac{2}{1-2^{-\theta}}
+
\sum_{v\in C'}\log_2\frac{1}{\lambda_-(v)}=
O_{d,\eta}(n^{d-1}).
\]
Therefore,
\[
|\F_{C'}|
 \le
2^{\theta\Gamma_dR+O_{d,\eta}(n^{d-1})}
=
2^{\eta R/3+O_{d,\eta}(n^{d-1})}.
\]

Thus, the three second-difference arrays together have at most
\begin{align*}
|\F_{B'}|\,|\F_{A'}|\,|\F_{C'}|
& \le
2^{\eta R+O_{d,\eta}(n^{d-1})}
\end{align*}
possible values. This completes the proof of \cref{lem::lowerProfileCount}.
\end{proof}

\subsection{Proof of the upper-profile lemma}

\begin{proof}[Proof of \cref{lem::upperProfileCount}]
Fix the restriction of $\P$ to $A \cup B \cup C^-$ and consider completions with $\delta(\P) \le R$. Let $E^\ast \ce \{z \in E:h(z) \ge 2m+2\}$ and $U \ce C^+ \cup D \cup E^\ast$. Note that $E\sm E^\ast=\{z\in E:h(z)=2m+1\}$ has $O_d(n^{d-2})$ points, and hence its profile values may contribute at most $2^{O_d(n^{d-2}\log n)}$ choices. Hence, it suffices to consider the total number of upper-profile completions on $U$.

The main idea of the proof is as follows. For every $z\in U$, the unshifted defects compare $\P(z)$ with sums $\P(x)+\P(y)$ at source points $x,y\in A \cup B \cup C^-$, whose profile values are already fixed. We sum these pointwise bounds and count the possible values of the upper profile directly by a geometric series. This pointwise mechanism is illustrated in \cref{fig::upperProfileD} for $d=3$.

\begin{figure}[ht]
\centering
\begin{tikzpicture}[
  x=1.15cm,y=1.15cm,
  line cap=round,line join=round,
  every node/.style={font=\small},
  point/.style={circle,fill=black,inner sep=1.7pt}
]
  \fill[black!5]
    (1.2,0)--(6,0)--(6,0.8)--(0.8,6)--(0,6)--(0,1.2)--cycle;
  \fill[black!12]
    (1.8,6)--(6,6)--(6,1.8)--cycle;

  \draw[black!55] (0,1.2)--(1.2,0);
  \draw[black!55] (0.8,6)--(6,0.8);
  \draw[black!55] (1.8,6)--(6,1.8);
  \draw[semithick] (0,0) rectangle (6,6);

  \node at (2.85,2.10) {$B$};
  \node at (4.85,4.85) {$D$};
  \node[anchor=north east] at (5.92,5.92) {$V$};

  \coordinate (xpt) at (3.00,1.20);
  \coordinate (ypt) at (1.20,2.80);
  \coordinate (zpt) at (4.20,4.00);
  \draw[densely dashed,black!65] (xpt)--(zpt) (ypt)--(zpt);
  \node[point] at (xpt) {};
  \node[below right] at (xpt) {$x$};
  \node[point] at (ypt) {};
  \node[above left] at (ypt) {$y$};
  \node[point] at (zpt) {};
  \node[right] at (zpt) {$z=x+y$};
\end{tikzpicture}
\caption{A schematic picture for $d=3$ showing how the upper profile
on $D$ is counted after the lower profile on $B$ has been fixed. For
a fixed $z\in D$, every representation $z=x+y$ with $x,y\in B$ gives
the known $\P(x)+\P(y)$. Thus, the weighted sum
$\psi_z(\P(z))$ controls the possible value of $\P(z)$.}
\label{fig::upperProfileD}
\end{figure}

For every $z \in U$, let
\[
\lambda(z) \ce
\begin{cases}
\lambda_+(z), & z \in C^+,\\
1, & z \in D \cup E^\ast.
\end{cases}
\]
Note that $0 < \lambda(z) \le 1$ by the
definition of $\lambda$ and \cref{lem::w2Weights}. For every $t\in[-1,n]$, define
\[
\psi_z(t) \ce
\begin{cases}
\displaystyle
\sum_{\substack{x,y \in V\\x+y=z}}
\o_1(x,y,z)|t-\P(x)-\P(y)|,
&z \in D,\\[2mm]
\displaystyle
\sum_{\substack{x,y \in V\\x+y=z}}
\o_2(x,y,z)|t-\P(x)-\P(y)|,
&z \in C^+,\\[2mm]
\displaystyle
\sum_{\substack{x,y \in V\\x+y=z}}
\o_3(x,y,z)|t-\P(x)-\P(y)|,
&z \in E^\ast.
\end{cases}.
\]
By the support statements in \cref{lem::KLWeights}, all profile values on the right-hand side are over $A \cup B \cup C^-$ and hence fixed. 
Also, note that by
\cref{lem::w1Weights,lem::w2Weights,lem::w3Weights}, the total coefficient in the definition of $\psi_z$ is
$\lambda(z)$. By the triangle inequality,
\[
|t_1-\P(x)-\P(y)|+|t_2-\P(x)-\P(y)| \ge \bigl|(t_1-\P(x)-\P(y))-(t_2-\P(x)-\P(y))\bigr|=|t_1-t_2|.
\]
Multiplying by the corresponding weight and summing over all $x,y$ with $x+y=z$, we obtain
$\psi_z(t_1)+\psi_z(t_2) \ge \lambda(z)|t_1-t_2|$. For every $z\in U$, fix $t_z\in[-1,n]$ minimizing $\psi_z$. Since $\psi_z(t_z) \le \psi_z(t)$, the previous inequality gives
\begin{equation} \label{equ::psitge}
\psi_z(t)
 \ge
\frac{\lambda(z)}{2}|t-t_z|.
\end{equation}

By \cref{lem::w3Weights}, we have $\sum_{x,y \in V}\o_3(x,y,z)=0$ for every $z \in E \sm E^\ast$. Therefore, every completion $\P$ under consideration satisfies
\[
\sum_{z\in U}\psi_z(\P(z))
=
\delta_1^{0,0}(\P)+\delta_2^{0,0}(\P)+\delta_3(\P)
 \le
3\delta(\P)
 \le
3R,
\]
so by \cref{equ::psitge},
\begin{equation} \label{equ::sumzinUlambdapzmtz}
\sum_{z \in U}\lambda(z)|\P(z)-t_z|
 \le
2\sum_{z \in U}\psi_z(\P(z))
 \le 6R.
\end{equation}

Let $\theta \ce \eta/6$, and let $\F_U$ be the family of functions $f:U \to [-1,n]$ that arise as the restrictions of completions under consideration. For every $f\in\F_U$, by \cref{equ::sumzinUlambdapzmtz}, we have
\[
\sum_{z\in U}\lambda(z)|f(z)-t_z| \le 6R,
\]
so
\[
2^{\theta\left(
6R-\sum_{z\in U}\lambda(z)|f(z)-t_z|
\right)} \ge 1.
\]
Hence,
\[
|\F_U|
=
\sum_{f\in\F_U}1
 \le
2^{6\theta R}
\sum_{f\in\F_U}
\prod_{z\in U}2^{-\theta\lambda(z)|f(z)-t_z|}
 \le
2^{6\theta R}
\prod_{z\in U}
\sum_{i\in\Zs}2^{-\theta\lambda(z)|i|}.
\]
As in the proof of \cref{lem::lowerProfileCount}, for every $0<\lambda \le 1$, we have
\[
\sum_{i\in\Zs}2^{-\theta\lambda|i|}
=
\frac{1+2^{-\theta\lambda}}{1-2^{-\theta\lambda}}
 \le
\frac{2}{\lambda(1-2^{-\theta})}.
\]
Moreover, by the fact that $|U|=O_d(n^{d-1})$, \cref{equ::lambdaLogSum}, and the definition that $\lambda(z)=1$ for $z\in D \cup E^\ast$, we have
\[
\log_2\bigg(
\prod_{z\in U}
\frac{2}{\lambda(z)(1-2^{-\theta})}
\bigg)
=
|U|\log_2\frac{2}{1-2^{-\theta}}
+
\sum_{z\in C^+}\log_2\frac{1}{\lambda_+(z)}
=
O_{d,\eta}(n^{d-1}),
\]
Thus,
\[
|\F_U|
 \le
2^{6\theta R+O_{d,\eta}(n^{d-1})}
=
2^{\eta R+O_{d,\eta}(n^{d-1})}.
\]
This completes the proof of \cref{lem::upperProfileCount}.
\end{proof}

\section{Concluding remarks}
\label{sec::concluding}

Together with the previous results for $d=1$ and $d=2$, \cref{thm::main} determines the total number of sum-free subsets of $[n]^d$ up to a boundary-order error in the exponent for every fixed dimension. Our proof does not give an upper bound for the number of sum-free sets of a given size or a stability theorem for near-extremal sum-free sets. It seems plausible that a stability result could be obtained by examining the proof of Keevash and Lim~\cite{keevash2026largest} more carefully and tracing the near-equality cases in their dual weights. Indeed, if a sum-free set has size $M([n]^d)-o(n^d)$, then the total slack must be small. Then, one would need to convert this averaged near-equality into structural information on most fibers and show that the set is close to the optimal stripe.

The repeated-source term handled by \cref{prop::localEntropy}~(2) can also be handled by ordinary fractional Shearer together with the Green--Morris theorem~\cite[Theorem 1.1]{green2016counting}. Ordinary Shearer uses the blocks $\{v,2v\}$ and $\{v\}$ and therefore requires an estimate for $H(X,Z)+H(X)$. After fixing the minimum of the source and applying the log-sum inequality, this reduces to a weighted sum with terms of the form $2^{-| (S+S) \cap [0,t] |/2}$. Splitting according to the size of $S$ and the size of $S+S$, and applying the Green--Morris estimate to the dense small-doubling sets, gives the required strict exponential saving. This was the argument used in an earlier version of the paper.

A second alternative is a modulo-$3$ refinement. Split every fiber into the three residue classes and record the corresponding residue-wise profile values. For each weighted triple, use the six ordered pairs of distinct source residues, each with half of the original coefficient, and use the target residue equal to the sum of the source residues modulo $3$. Even when the two base points are equal, the two source pieces are disjoint. Across the six blocks, every source residue has the required total multiplicity two and every target residue has multiplicity one. Thus, only the ordinary three-fiber estimate is needed. This refinement requires us to redefine the profiles, defects, and cancellation equalities.

A third possible route is an anti-collision refinement of the additive couplings in the Keevash--Lim construction. The estimates needed for this approach are
\[
\sum_{\substack{(x,y,z) \in \T\\ y-x=q}}
\o_1(x,y,z)=O_d(n^{d-2})
\qquad
\text{for every }q \in \{0,\pm e_1,\pm2e_1\},
\]
and
\[
\sum_{\substack{(x,y,z) \in \T\\ x=y}}
\o_3(x,y,z)=O_d(n^{d-2}).
\]
A preliminary check of the recursive coupling construction of Keevash and Lim suggests that such estimates should be obtainable, and if so, one may delete the weights over all such triples. Since the deleted mass is $O_d(n^{d-2})$, these terms contribute only $O_d(n^{d-1})$ to the exponent. The resulting loss in $W$ can be repaired by adding singleton blocks with coefficients equal to the lost coordinate weights. Every remaining weighted triple has distinct source fibers and is handled by \cref{prop::localEntropy}~(1). However, establishing these estimates seems to require a substantially longer refinement of the recursive coupling construction of Keevash and Lim. In particular, we cannot just use their Corollary~2.4 as a black box and take the constructions in their Lemma~4.3.

The author prefers the strong fractional entropy argument used in the current proof. It only requires $H(X,Z)+H(X\mid Z)$, reduces the repeated-source estimate to the distinct-source estimate through Zhao's bipartite swapping lemma, and uses the Keevash--Lim couplings only through the support and marginal properties recorded in \cref{sec::keevashlim}.

\section*{Acknowledgments and declaration on the use of generative AI}

This work was developed with the assistance of ChatGPT. The initial idea of combining the dual weights of Keevash and Lim with defects of profiles was proposed by the author. Subsequent proof development, including the completion and simplification of several technical arguments, was carried out through a series of iterative discussions with the model. In particular, the anti-collision approach described in \cref{sec::concluding} was the author's original proposed method for handling repeated source fibers. When the author first asked ChatGPT to pursue this approach, the model did not produce a proof and noted that the problem appeared potentially difficult. After further discussions, ChatGPT suggested the modulo-$3$ alternative, and a plausible proof was eventually developed through further work, but the author regarded the modulo-$3$ approach as artificial and preferred not to use it. The model then suggested the Green--Morris theorem, which was used in an earlier version of the paper. Later, the author observed that the repeated-source block should require only a conditional entropy term and asked whether an ordered form of Shearer's inequality could avoid the Green--Morris input. The model located the strong fractional entropy inequality of Madiman and Tetali and suggested combining it with Zhao's bipartite swapping lemma. This yielded the repeated-source proof used in the final version. The anti-collision approach was also revisited before the submission; this time ChatGPT gave a plausible proof, and a preliminary inspection by the author suggests that the proposed proof may be correct, but the author has not verified it.

The author independently checked all mathematical statements, proofs, and calculations and takes full responsibility for all contents of this paper.

The author is grateful to J\'ozsef Balogh, Guanghui Wang, and Wenling Zhou for helpful discussions.

\appendix
\crefalias{section}{appendix}

\section{Coordinate sums and translations of the Keevash--Lim weights}
\label{app::KLCoordinateSums}

The purpose of this appendix is to prove \cref{lem::KLWeights,lem::w1Weights,lem::w2Weights,lem::w3Weights,lem::translationVariation}. We verify the support refinements and compute the coordinate sums. We use these formulas to show that a bounded translation changes the covering function $W$ by only $O_d(n^{d-2})$.

We use two parts of the work of Keevash and Lim. Corollary~2.4 in~\cite{keevash2026largest} gives the uniform additive couplings. In the proof of Lemma~4.3 in~\cite{keevash2026largest}, they construct three weight components $w_1,w_2,w_3$ using these couplings. We take exactly these components. We do not modify the weights and only verify the properties we need. To compare the notation, their $[n+1]$ is our $[0,n]$, their $\mathbf 1^Tv$ is our $h(v)$, and their $\mathcal A,\mathcal B,\mathcal C,\mathcal D,\mathcal E,w_i,W_i$ correspond to our $A,B,C,D,E,\o_i,W_{\o_i}$. Recall that
\begin{align*}
A \ce \{v : 0 \le h&(v) < m-n\}, \,\,
B \ce \{v : m-n \le h(v) \le m\}, \,\,
C \ce \{v : m < h(v) < 2m-n\},\\
D& \ce \{v : 2m-n \le h(v) \le 2m\}, \quad
E \ce \{v : 2m < h(v) \le (d-1)n\}.
\end{align*}
and
\[
C^- \ce \left\{v : m < h(v) \le \left\lfloor {(3m-n-1)}/{2}\right\rfloor\right\},
\quad
C^+ \ce \left\{v : \left\lfloor {(3m-n-1)}/{2}\right\rfloor < h(v) < 2m-n\right\}.
\]

For every positive integer $j$ and every integer $k$, let $L_j(k) \ce \{v \in [0,n]^j : h(v)=k\}$ and $\l_j(k) \ce |L_j(k)|$.

For convenience, we restate Corollary~2.4 in~\cite{keevash2026largest} in our notation.

\begin{lemma}[Keevash--Lim]\label{lem::KLAdditiveCoupling}
For every positive integer $j$ and all non-negative integers $r,s$ with $r+s \le jn$, there is a coupling $(X,Y,Z)$ on $L_j(r)\times L_j(s)\times L_j(r+s)$ such that $X+Y=Z$ and $X,Y,Z$ are uniformly distributed on $L_j(r),L_j(s),L_j(r+s)$, respectively.
\end{lemma}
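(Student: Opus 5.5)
The plan is to induct on $j$ by peeling off one coordinate at a time. For $j=1$ the sets $L_1(r),L_1(s),L_1(r+s)$ are the single points $r,s,r+s$ (all in $[0,n]$ once we note that $r+s\le n$), so the point mass is the required coupling. For $j\ge 2$, write $v=(v',v_j)$ with $v'\in[0,n]^{j-1}$. Under the uniform distribution on $L_j(k)$ the last coordinate has law
\[
p_k(t)\ce \l_{j-1}(k-t)/\l_j(k),\qquad t\in[0,n],
\]
and, conditional on $v_j=t$, the vector $v'$ is uniform on $L_{j-1}(k-t)$. So the construction has two stages. First sample the last coordinates $(X_j,Y_j,Z_j)=(t_1,t_2,t_1+t_2)$ from a coupling $\mu$ with $t_1\sim p_r$, $t_2\sim p_s$, $t_1+t_2\sim p_{r+s}$. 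Then, given $(t_1,t_2)$, apply the induction hypothesis in dimension $j-1$ with levels $r-t_1$ and $s-t_2$ to obtain $(X',Y',Z')$ with $X'+Y'=Z'$ and uniform marginals on the three level sets.

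For the second stage to make sense, $\mu$ must be supported on pairs with $t_1+t_2\le n$, $r-t_1,\,s-t_2\ge 0$, and $(r-t_1)+(s-t_2)\le (j-1)n$. The first three conditions hold automatically on the supports of $p_r$, $p_s$ and $p_{r+s}$. Given this, $X=(X',t_1)$ has the correct last-coordinate law and, conditionally, the uniform law on the fiber, so it is uniform on $L_j(r)$. The same argument applies to $Y$ and to $Z=(Z',t_1+t_2)$, and $X+Y=Z$ holds coordinatewise.

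The whole difficulty is therefore a one-dimensional three-marginal problem: find a nonnegative $\mu(t_1,t_2)$ whose two marginals are $p_r$ and $p_s$ and whose pushforward under $(t_1,t_2)\mapsto t_1+t_2$ is $p_{r+s}$, with the admissibility constraints above. I would attack this by LP duality (Farkas' lemma or a max-flow/Hall condition on the tripartite transportation network). Existence is equivalent to
\[
\sum_{t}\big(f(t)p_r(t)+g(t)p_s(t)\big)\ \ge\ \sum_t h(t)\,p_{r+s}(t)
\]
for all $f,g,h$ with $f(t_1)+g(t_2)\ge h(t_1+t_2)$ on admissible pairs. The structural inputs I would use are:
\begin{itemize}
\item the sequences $k\mapsto \l_{j-1}(k)$ are symmetric and log-concave, being coefficients of $(1+x+\cdots+x^n)^{j-1}$;
\item the explicit formula for $p_k$, which makes $p_{r+s}$ a normalized ``convolution-like'' reweighting of $p_r$ and $p_s$.
\end{itemize}
I expect this step to be the main obstacle. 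A cleaner alternative I would try first is to avoid the dual altogether and build $\mu$ greedily, as a monotone (quantile-type) coupling of $t_1$ and $t_2$, and then verify by log-concavity that the law of the sum is exactly $p_{r+s}$. If exactness fails, I would fall back to a flow argument on the bipartite graph between the levels of $t_1$ and $t_2$ with the sum constraint.

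If neither works directly, a more robust fallback is to strengthen the induction hypothesis. One could prove the statement for general boxes $\prod_i[0,n_i]$, or show the existence of the coupling via a symmetry/averaging argument: average any coupling over the symmetric group permuting coordinates and over reflections $v\mapsto n\mathbf 1-v$, where applicable. The final step is to check that the admissibility constraints above are compatible with the supports, which is where the hypothesis $r+s\le jn$ is used.
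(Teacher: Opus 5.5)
There is a genuine gap. Your reduction is correct as far as it goes: if you have a law $\mu$ of $(t_1,t_2)$ with marginals $p_r,p_s$ and with $t_1+t_2\sim p_{r+s}$, then the two-stage construction does give the required coupling. The admissibility constraints also follow automatically from these three support conditions.

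However, the reduction removes none of the difficulty. Projecting any valid $j$-dimensional coupling onto the last coordinate produces exactly such a $\mu$. So, given the $(j-1)$-dimensional case, the lemma is \emph{equivalent} to the existence of $\mu$. Your argument does not even cover $j=2$: there each point of $L_2(k)$ is determined by its last coordinate, so the one-dimensional problem with $\ell_1=\mathbf 1_{[0,n]}$ is literally the statement. The existence of $\mu$ is therefore the whole content of the lemma, and you do not prove it. You state the Farkas dual condition correctly but never verify it. Naming symmetry and log-concavity of $\ell_{j-1}$ as inputs is not an argument that the inequality holds for every admissible $(f,g,h)$.

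The concrete routes you propose also fail.
\begin{itemize}
\item \emph{Monotone coupling.} Take $j=2$, $r=s=1$, $n\ge 2$. Then $p_1$ is uniform on $\{0,1\}$ and $p_2$ is uniform on $\{0,1,2\}$. The constraints force $\mu(0,0)=\mu(1,1)=1/3$ and $\mu(0,1)=\mu(1,0)=1/6$. The quantile (comonotone) coupling gives sum law $(1/2,0,1/2)$, and the antitone coupling gives sum identically $1$. So no check via log-concavity can make a monotone coupling exact.
\item \emph{Flow argument.} A bipartite flow between the values of $t_1$ and $t_2$ cannot encode the third (anti-diagonal) marginal. This is a three-index transportation problem, to which Hall's theorem or max-flow min-cut do not apply as such.
\item \emph{Symmetrization.} Averaging over coordinate permutations only makes the marginals permutation-invariant, and $L_j(r)$ generally has many orbits, so this does not give uniformity. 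The map $v\mapsto n\mathbf 1-v$ sends $L_j(r)$ to $L_j(jn-r)$ and does not preserve $x+y=z$, so it is not a symmetry of the problem.
\end{itemize}

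The paper does not prove this lemma; it imports it verbatim as Corollary~2.4 of Keevash--Lim. To turn your outline into a proof, you would have to actually construct $\mu$, or verify the dual inequality, for all admissible $r,s$. That is exactly the nontrivial input the citation supplies.
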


Let $f_d$ denote the density of the sum of $d$ independent uniform random variables on $[0,1]_{\Rs}$. Keevash and Lim showed in Section~3 of~\cite{keevash2026largest} that $u_d^*$ is the unique solution in $(0,d)_\Rs$ of $f_d(u)=2f_d(2u)$.
As mentioned in \cref{subsec::fiberAndRegions}, $d/3 < u_d^* < d/2$. We include a short explanation here. Since $f_d$ is symmetric with respect to $d/2$, we have $f_d(d/3)=f_d(2d/3)$, and hence $f_d(d/3)-2f_d(2d/3)=-f_d(d/3)<0$. On the other hand, $f_d(d/2)-2f_d(d)=f_d(d/2)>0$. By continuity and the uniqueness of the solution in $(0,d/2)_{\Rs}$, it follows that $d/3 < u_d^* < d/2$. Recall that $m \ce \lfloor u_d^*n\rfloor$.

\begin{proof}[Proof of \cref{lem::KLWeights}]
The existence, nonnegativity, original support statements, the bound $W_{\o} \le 1$ on $C$, and the covering estimate outside $C$ are given by Lemma~4.3 in~\cite{keevash2026largest}. We just need to prove the sharper support descriptions $\supp(\o_2) \subseteq \T(A,C^-,C^+)$ and $\supp(\o_3) \subseteq \T(C^-,C^-,E)$.

In the construction of $w_2$ in the proof of Lemma~4.3 in~\cite{keevash2026largest}, they let
\[
1 \le r<m-n-1, \qquad
s = s(r) \ce \left\lfloor\frac{3m-n-r}{2}\right\rfloor, \qquad
t = t(r) \ce r+s.
\]
The $h$-values of the first source, second source, and target are $r,s,t$, respectively. Since $1 \le r<m-n-1$, the first source lies in $A$. As $r$ varies through this range, we have
\[
m+1 \le s \le \left\lfloor\frac{3m-n-1}{2}\right\rfloor,
\qquad
\left\lceil\frac{3m-n}{2}\right\rceil \le t \le 2m-n-1.
\]
Hence, the second source lies in $C^-$ and the target lies in $C^+$.

In the construction of $w_3$ in the proof of Lemma~4.3 in~\cite{keevash2026largest}, they let
\[
2m+2 \le t \le (d-1)n, \qquad
r = r(t) \ce \left\lfloor\frac{t}{2}\right\rfloor, \qquad
s \ce t-r=\left\lceil\frac{t}{2}\right\rceil.
\]
The $h$-values of the two sources and the target are $r,s,t$, respectively. Since $3m-dn=(3u_d^*-d)n+O(1) \ge 3$ for every sufficiently large $n$, we have
\[
m+1 \le r \le s \le \left\lceil\frac{(d-1)n}{2}\right\rceil
 \le \left\lfloor\frac{3m-n-1}{2}\right\rfloor.
\]
Therefore, both sources lie in $C^-$.
\end{proof}

\begin{proof}[Proof of \cref{lem::w1Weights}]
In the construction of $w_1$ in the proof of Lemma~4.3 in~\cite{keevash2026largest}, Keevash and Lim use the coupling $(X_1,Y_1,Z_1)$ supplied by \cref{lem::KLAdditiveCoupling} with $j=d$ and $r=s=m$. Hence, $X_1+Y_1=Z_1$, and $X_1,Y_1,Z_1$ are uniform on $L_d(m),L_d(m),L_d(2m)$, respectively. Let $\pi$ be the projection that removes the last coordinate. The definition of $\o_1$ by Keevash and Lim is
\[
\o_1(x,y,z) \ce |D|\cdot \Pr\bigl((\pi(X_1),\pi(Y_1),\pi(Z_1))=(x,y,z)\bigr).
\]
Note that $\pi$ gives bijections $L_d(m)\to B$ and $L_d(2m)\to D$, as $B=\{v\in V:m-n \le h(v) \le m\}$ and $D=\{v\in V:2m-n \le h(v) \le 2m\}$. Therefore, $\pi(X_1),\pi(Y_1)$ are uniform on $B$ and $\pi(Z_1)$ is uniform on $D$; also, we get $|B|=\l_d(m)$ and $|D|=\l_d(2m)$.

For every $v\in B$, we have
\begin{equation} \label{equ::w1xsum}
\sum_{y,z\in V}\o_1(v,y,z)=|D|\cdot\sum_{y,z\in V}\Pr\bigl((\pi(X_1),\pi(Y_1),\pi(Z_1))=(v,y,z)\bigr)=|D|\cdot\Pr\bigl(\pi(X_1)=v\bigr)=\frac{|D|}{|B|}.
\end{equation}
Similarly, we have
\begin{equation} \label{equ::w1ysum}
\sum_{x,z\in V}\o_1(x,v,z)=|D|/|B|.
\end{equation}
By the definition of $m$, we have $m = u_d^*n + O(1)$. The lattice-slice estimates used in the construction of $w_1$ in the proof of Lemma~4.3 in~\cite{keevash2026largest} therefore give $|B| = f_d(u_d^*)n^{d-1} + O_d(n^{d-2})$ and $|D| = f_d(2u_d^*)n^{d-1} + O_d(n^{d-2})$. Since $f_d(u_d^*) = 2f_d(2u_d^*)$, we have $|B| = 2|D| + O_d(n^{d-2})$ and $|B| = \Theta_d(n^{d-1})$. Therefore, $\frac{|D|}{|B|} = \frac{1}{2} + O_d(n^{-1})$. This proves \cref{equ::w1CoordinateSums}.

For every $v\in D$, we have
\begin{equation} \label{equ::w1zsum}
\sum_{x,y\in V}\o_1(x,y,v)=|D|\cdot\sum_{x,y\in V}\Pr\bigl((\pi(X_1),\pi(Y_1),\pi(Z_1))=(x,y,v)\bigr)=|D|\cdot\Pr\bigl(\pi(Z_1)=v\bigr)=1.
\end{equation}
Finally,
\[
\sum_{x,y,z\in V}\o_1(x,y,z)=|D|\cdot\sum_{x,y,z\in V}\Pr\bigl((\pi(X_1),\pi(Y_1),\pi(Z_1))=(x,y,z)\bigr)=|D|=O_d(n^{d-1}). \qedhere
\]
\end{proof}

For the proof of \cref{lem::w2Weights,lem::w3Weights,lem::translationVariation}, we need the following lemma.
\begin{lemma}\label{lem::sliceDifference}
For every non-negative integer $k$ and every integer $r \in [1,4]$, we have
\begin{equation} \label{equ::sliceDifference}
|\l_{d-1}(k+r)-\l_{d-1}(k)|=O_d(n^{d-3}).
\end{equation}
Moreover, $\l_{d-1}(k)=\Theta_d(n^{d-2})$ for every integer $k \in [n,2m-n-1]$.
\end{lemma}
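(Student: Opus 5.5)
The plan is to treat $\l_{d-1}(k)$ as a count of lattice points in a slice of $[0,n]^{d-1}$ and to prove both parts by induction on the dimension. The recursion is
\[
\l_j(k)=\sum_{i=0}^{n}\l_{j-1}(k-i),
\]
where $\l_1(k)=1$ for $k\in[0,n]$ and $\l_1(k)=0$ otherwise. Here $j=d-1\ge 2$.

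\textbf{The difference bound.} It suffices to treat $r=1$; the cases $r\le 4$ follow by the triangle inequality. Telescoping the recursion gives
\[
\l_j(k+1)-\l_j(k)=\l_{j-1}(k+1)-\l_{j-1}(k-n).
\]
Each term on the right is a slice count in dimension $j-1$, hence is at most $(n+1)^{j-2}$: once $j-2$ coordinates are chosen, the last coordinate is determined by $k$. With $j=d-1$ this gives $O_d(n^{d-3})$. For $j=2$ the bound reads $O(1)$, which is consistent since $\l_1\le 1$. No genuine induction is needed; the single telescoping step is enough.

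\textbf{The lower bound.} The upper bound $\l_{d-1}(k)\le (n+1)^{d-2}$ is immediate for the same reason. For the lower bound, write $k=\tau n$. Since $k\in[n,2m-n-1]$ and $m=u_d^*n+O(1)$, we get $\tau\in[1,2u_d^*-1]$. Because $u_d^*<d/2$, the interval $[1,2u_d^*-1]$ lies strictly inside $(0,d-1)$, so it is a fixed compact subinterval of the interior of the range of $h$ on $[0,1]^{d-1}$.

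\textbf{Counting points in the slice.} I will exhibit $\Omega_d(n^{d-2})$ points directly. Choose the first $d-2$ coordinates in $[0,n]$ with $k-n\le v_1+\dots+v_{d-2}\le k$; then $v_{d-1}$ is determined and lies in $[0,n]$. So $\l_{d-1}(k)$ equals the number of points of $[0,n]^{d-2}$ whose coordinate sum lies in the window $[k-n,k]$. Since $\tau\in[1,2u_d^*-1]\subset[1,d-1)$, the window $[\tau-1,\tau]$ meets $[0,d-2]$ in an interval of length bounded below by a constant $c_d>0$.
\begin{itemize}
\item If $d-2\ge 1$ and $\tau\le d-1-c$, that intersection has length at least $\min(1,d-1-\tau)$, which is bounded below because $2u_d^*-1<d-1$.
\item The continuous volume of $\{w\in[0,1]^{d-2}: h(w)\in[\tau-1,\tau]\}$ is then bounded below uniformly, since $f_{d-2}$ is positive on the interior of its support. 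For $d=3$ one can instead simply count directly.
\end{itemize}
A standard lattice-point approximation, or an explicit box of side $\varepsilon n$ around a central point, then gives $\Omega_d(n^{d-2})$ points.

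\textbf{Main obstacle.} I expect the difficulty to lie in keeping the constants uniform near the endpoint $k=2m-n-1$. This is where the strict inequality $u_d^*<d/2$ is used, together with the observation that $k\ge n$ keeps the window away from the lower corner of the cube.
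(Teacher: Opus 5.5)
Your proposal is correct and follows the paper's proof: the difference bound is exactly the paper's telescoping identity $\l_{d-1}(k+1)-\l_{d-1}(k)=\l_{d-2}(k+1)-\l_{d-2}(k-n)$ combined with $\l_{d-2}\le (n+1)^{d-3}$, and your ``explicit box of side $\varepsilon n$'' option is the paper's lower-bound argument. The paper makes the box concrete by centering it at the diagonal point whose first $d-2$ coordinates all equal $k/(d-1)$; this point is at distance $\Omega_d(n)$ from $0$ and $n$ because $1\le k/n\le 2u_d^*-1<d-1$, so your continuous-volume detour via $f_{d-2}$ is not needed.
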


\begin{proof}
We first prove \cref{equ::sliceDifference}. By considering the first coordinate, we have $\l_{d-1}(k)=\sum_{a=0}^{n}\l_{d-2}(k-a)$, and hence
\[
\l_{d-1}(k+1)-\l_{d-1}(k)=\l_{d-2}(k+1)-\l_{d-2}(k-n).
\]
Note that $\l_{d-2}(s) \le (n+1)^{d-3}$ for every integer $s$, since fixing the first $d-3$ coordinates determines the last coordinate. Hence, $|\l_{d-1}(k+1)-\l_{d-1}(k)|=O_d(n^{d-3})$. Summing at most four consecutive differences proves \cref{equ::sliceDifference}.

We next prove the second assertion. The upper bound is immediate, since fixing the first $d-2$ coordinates determines the last coordinate. Fix $k\in[n,2m-n-1]$. Since $m=\lfloor u_d^*n\rfloor$ and $u_d^*<d/2$, we have $1/(d-1) \le k/((d-1)n)<(2u_d^*-1)/(d-1)<1$. Therefore, there is a constant $b_d>0$ such that $k/(d-1)$ has distance at least $b_dn$ from both $0$ and $n$. Choose each of $v_1,\ldots,v_{d-2}$ among the integers satisfying $|v_i-k/(d-1)| \le b_dn/(2(d-2))$, and let $v_{d-1} \ce k-\sum_{i=1}^{d-2}v_i$. There are $\Omega_d(n^{d-2})$ such choices. Moreover, $|v_{d-1}-k/(d-1)| \le b_dn/2$, so all the coordinates lie in $[0,n]$. Hence, $\l_{d-1}(k)=\Omega_d(n^{d-2})$. Together with the upper bound, this proves the second assertion.
\end{proof}

\begin{proof}[Proof of \cref{lem::w2Weights}]
The construction of $w_2$ in the proof of Lemma~4.3 in~\cite{keevash2026largest} goes as follows. For every integer $r$ with $1 \le r<m-n-1$, let $s = s(r) \ce \left\lfloor\frac{3m-n-r}{2}\right\rfloor$ and $t \ce r+s$. For every $r$, by \cref{lem::KLAdditiveCoupling}, there is a coupling $(X_2^{(r)},Y_2^{(r)},Z_2^{(r)})$ on $L_{d-1}(r) \times L_{d-1}(s) \times L_{d-1}(t)$ with $X_2^{(r)}+Y_2^{(r)}=Z_2^{(r)}$, where $X_2^{(r)},Y_2^{(r)},Z_2^{(r)}$ are uniform on $L_{d-1}(r),L_{d-1}(s),L_{d-1}(t)$, respectively. For every $x$ with $1 \le h(x)<m-n-1$, the definition of $\o_2$ is
\[
\o_2(x,y,z) \ce \l_{d-1}(h(x)) \cdot \Pr \Big(
\big(X_2^{(h(x))},Y_2^{(h(x))},Z_2^{(h(x))}\big)=(x,y,z)
\Big),
\]
and $\o_2(x,y,z) \ce 0$ when $h(x) \notin [1,m-n-1)$.

Now, for every $v \in A$ with $h(v) \in [1,m-n-2]$, we have
\[
\sum_{y,z \in V}\o_2(v,y,z)=\l_{d-1}(h(v)) \cdot \Pr\bigl(X_2^{(h(v))}=v\bigr)=1.
\]
If $h(v)$ does not belong to this interval, then by definition, no component has $v$ as its first source, so the coordinate sum is $0$.

Fix $v \in C^-$. By the definition of $\o_2$, the value $\o_2(x,v,z)$ can be non-zero only when the second source layer of the component indexed by $h(x)$ is $L_{d-1}(h(v))$. Equivalently, $\lfloor(3m-n-h(x))/2\rfloor=h(v)$, which holds precisely when $h(x)\in\{3m-n-2h(v)-1,3m-n-2h(v)\}$. For either admissible value $r=h(x)$, the total contribution of the component indexed by $r$ to $\lambda_-(v)$ is $\l_{d-1}(r)\Pr(Y_2^{(r)}=v)=\l_{d-1}(r)/\l_{d-1}(h(v))$. Thus, $\lambda_-(v)$ depends only on $h(v)$. Moreover, since $v \in C^-$, we have $h(v) > m$, so $1 \le 3m-n-2h(v) \le m-n-2$ and hence at least one of the two candidate first-source layers belongs to $[1,m-n-1)$. Hence, $\lambda_-(v)>0$. We also have $\lambda_-(v) \le 1$, since $\lambda_-(v) \le W(v)$ by definition and $W(v) \le 1$ by \cref{lem::KLWeights}.

Similarly, fix $v' \in C^+$. The value $\o_2(x,y,v')$ can be non-zero only when the target layer of the component indexed by $h(x)$ is $L_{d-1}(h(v'))$. Since $t=\lfloor(3m-n+r)/2\rfloor$, this holds precisely when $h(x)\in\{2h(v')-(3m-n),2h(v')-(3m-n)+1\}$. For either admissible value $r=h(x)$, the total contribution of the component indexed by $r$ to $\lambda_+(v')$ is $\l_{d-1}(r)\Pr(Z_2^{(r)}=v')=\l_{d-1}(r)/\l_{d-1}(h(v'))$. Thus, $\lambda_+(v')$ depends only on $h(v')$. Moreover, since $v'\in C^+$, we have $0 \le 2h(v')-(3m-n) \le m-n-2$. If $2h(v')-(3m-n)=0$, then the second candidate is $1$; otherwise, the first candidate belongs to $[1,m-n-2]$. Hence, at least one of the two candidates is admissible, so $\lambda_+(v')>0$. We also have $\lambda_+(v') \le 1$, since $\lambda_+(v') \le W(v')$ by definition and $W(v') \le 1$ by \cref{lem::KLWeights}.

Now, we prove \cref{equ::lambdaLogSum} by considering the lower bound on $\lambda_-(v)$ and $\lambda_+(v')$ more carefully. For each $h$-level in $C^-$ or $C^+$, as proved above, we can choose one admissible first-source level $r\in[1,m-n-2]$ from the two candidates found above. Since the relevant coordinate sum is the sum of non-negative contributions, it is at least the contribution from this chosen component. Note that there is a constant $c_d>0$ such that $\l_{d-1}(r) \ge c_d\min\{r^{d-2},n^{d-2}\}$. Indeed, for $r \le n$, we have $\l_{d-1}(r)=\b{r+d-2}{d-2}=\Omega_d(r^{d-2})$; if $r \ge n$, then $r \le m-n-2<2m-n-1$, so $r\in[n,2m-n-1]$ and the second assertion of \cref{lem::sliceDifference} gives $\l_{d-1}(r)=\Theta_d(n^{d-2})$. Since every layer in $[0,n]^{d-1}$ has at most $(n+1)^{d-2}$ points, after decreasing $c_d$ if necessary, the corresponding value of $\lambda_-$ or $\lambda_+$ is at least $c_d\min\{1,(r/n)^{d-2}\}$. If $r \ge n$, then the corresponding logarithmic contribution is at most $\log_2(2+c_d^{-1})=O_d(1)$. If $r<n$, then the reciprocal of the corresponding coordinate sum is at most $c_d^{-1}(n/r)^{d-2}$ and hence, its logarithmic contribution is at most
\[
\log_2\left(\left(2+\frac{1}{c_d}\right)
\left(\frac{n}{r}\right)^{d-2}\right)
=
\log_2\left(2+\frac{1}{c_d}\right)
+
(d-2)\log_2\frac{n}{r}.
\]
Therefore, in both cases, the logarithmic contribution of every point on this level is at most
\[
C_d\left(1+\max\left\{0,\log_2\frac{n}{r}\right\}\right).
\]
Note that each $r$ can correspond to at most one $h$-level in $C^-$ and at most one $h$-level in $C^+$. Indeed, when the $h$-level increases by one, both candidate first-source levels change by two, so the two-element candidate sets for distinct $h$-levels are disjoint. Moreover, each $h$-level contains at most $(n+1)^{d-2}$ points. Consequently, the left-hand side of \cref{equ::lambdaLogSum} is at most
\[
C_d(n+1)^{d-2}
\sum_{r=1}^{m-n-2}
\left(
1+\max\left\{0,\log_2\frac{n}{r}\right\}
\right)
=
O_d(n^{d-1}),
\]
since $m=O_d(n)$ and $\sum_{i=1}^{n}\log_2(n/i)=O(n)$. This proves \cref{equ::lambdaLogSum}.

Finally,
\[
\sum_{x,y,z \in V}\o_2(x,y,z)=\sum_{r=1}^{m-n-2}\l_{d-1}(r)
 \le dn \cdot (n+1)^{d-2}
=O_d(n^{d-1}). \qedhere
\]
\end{proof}

\begin{proof}[Proof of \cref{lem::w3Weights}]
The construction of $\o_3$ in the proof of Lemma~4.3 in~\cite{keevash2026largest} goes as follows. For $d=3$, there is nothing to construct. Assume $d \ge 4$. For every $t \in [2m+2, (d-1)n]$, let $r = r(t) = \left \lfloor \frac{t}{2} \right \rfloor$ and $s = t - r = \left \lceil \frac{t}{2} \right \rceil$. By \cref{lem::KLAdditiveCoupling}, for each $t$, there is a coupling $(X_3^{(t)}, Y_3^{(t)}, Z_3^{(t)})$ on $L_{d-1}(r) \times L_{d-1}(s) \times L_{d-1}(t)$ with $X_3^{(t)} + Y_3^{(t)} =Z_3^{(t)}$ such that $X_3^{(t)}, Y_3^{(t)}, Z_3^{(t)}$ are uniform on $L_{d-1}(r)$, $L_{d-1}(s)$, $L_{d-1}(t)$, respectively. For every $z$ with $2m+2 \le h(z) \le (d-1)n$, the definition of $\o_3$ is
\[
\o_3(x,y,z)
 \ce
\l_{d-1}(h(z))\cdot
\Pr\Big(
\big(X_3^{(h(z))},Y_3^{(h(z))},Z_3^{(h(z))}\big)
=(x,y,z)
\Big),
\]
and $\o_3(x,y,z) \ce 0$ otherwise.

For every $v \in E$ with $h(v) \ge 2m+2$, we have
\[
\sum_{x,y \in V}\o_3(x,y,v)=\l_{d-1}(h(v)) \cdot \Pr\bigl(Z_3^{(h(v))}=v\bigr)=1.
\]
For $v$ with $h(v) = 2m+1$, then by definition, no component has $v$ as its target, so the coordinate sum is $0$.

Finally,
\[
\sum_{x,y,z \in V}\o_3(x,y,z)=\sum_{t=2m+2}^{(d-1)n}\l_{d-1}(t)
 \le dn \cdot (n+1)^{d-2}
=O_d(n^{d-1}). \qedhere
\]
\end{proof}

\begin{proof}[Proof of \cref{lem::translationVariation}]
For a region $X \subseteq V$ and a vector $u \in \Zs^{d-1}$, let $\partial_uX \ce \{v \in X:v+u \notin X\}$. By the definition of $\R$, every vector among $r,s,r+s$ has bounded coordinates, so we have $|\partial_uX|=O_d(n^{d-2})$ for every $X \in \{A,B,C^-,C^+,D\}$ and every $u \in \{r,s,r+s\}$. Note that
\begin{align*}
\B_1^{r,s} & \subseteq \{(x,y,z) \in \supp(\o_1):x \in \partial_r B\textrm{ or }y \in \partial_s B\textrm{ or }z \in \partial_{r+s}D\}, \\
\B_2^{r,s} & \subseteq \{(x,y,z) \in \supp(\o_2):x \in \partial_rA\textrm{ or }y \in \partial_sC^-\textrm{ or }z \in \partial_{r+s}C^+\}.
\end{align*}
By \cref{equ::w1xsum,equ::w1ysum,equ::w1zsum}, every coordinate sum of $\o_1$ is at most $1$. Therefore, we have
\[
\sum_{(x,y,z) \in \B_1^{r,s}}\o_1(x,y,z)
 \le
|\partial_rB|+|\partial_sB|+|\partial_{r+s}D|
=
O_d(n^{d-2}).
\]
Similarly, every coordinate sum of $\o_2$ is at most $1$ by \cref{lem::w2Weights}. Hence,
\[
\sum_{(x,y,z) \in \B_2^{r,s}}\o_2(x,y,z)
 \le
|\partial_rA|+|\partial_sC^-|+|\partial_{r+s}C^+|
=
O_d(n^{d-2}).
\]
This proves the first assertion.

For \cref{equ::translationVariation}, let
\[
p_1(v) \ce \sum_{y,z \in V}\o_i(v,y,z),\qquad
p_2(v) \ce \sum_{x,z \in V}\o_i(x,v,z),\qquad
p_3(v) \ce \sum_{x,y \in V}\o_i(x,y,v).
\]
We extend each $p_j$ by zero outside $V$. It suffices to prove that each of $\sum_{v \in V}|p_1(v-r)-p_1(v)|$, $\sum_{v \in V}|p_2(v-s)-p_2(v)|$, and $\sum_{v \in V}|p_3(v-r-s)-p_3(v)|$ is $O_d(n^{d-2})$. Indeed, before imposing the regional restrictions, the first, second, and third coordinate sums of the translated weight at $v$ are $p_1(v-r)$, $p_2(v-s)$, and $p_3(v-r-s)$, respectively. Passing to $\o_i^{r,s}$ deletes total weight $\sum_{(x,y,z)\in \B_i^{r,s}}\o_i(x,y,z)$. Therefore, $\sum_{v\in V}|W'(v)-W(v)|$ is at most the sum of the preceding three terms plus $3\sum_{(x,y,z)\in\B_i^{r,s}}\o_i(x,y,z)$, which is $O_d(n^{d-2})$ by the first assertion.

Suppose first that $i=1$. By \cref{equ::w1xsum,equ::w1ysum,equ::w1zsum}, we have that $p_1$ and $p_2$ are equal to $\frac{1}{2}+O_d(n^{-1})$ on $B$ and are $0$ outside $B$, while $p_3$ is equal to $1$ on $D$ and is $0$ outside $D$. Since $r,s,r+s$ have bounded coordinates, translating $B$ or $D$ changes membership only for $O_d(n^{d-2})$ points. Also $0 \le p_j \le 1$. This proves all three estimates.

It remains to consider the case $i=2$.

For $p_1$, by \cref{lem::w2Weights}, the first coordinate sum is the indicator function of the set $\{v \in V:1 \le h(v) \le m-n-2\}$, so the same argument gives $\sum_{v \in V}|p_1(v-r)-p_1(v)|=O_d(n^{d-2})$.

For $p_2$, the main idea is that $p_2(v)$ is the ratio of the sizes of layers, and a bounded translation changes all the layer indices by also just a bounded value. By \cref{lem::w2Weights}, we have $p_2(v)=0$ for $v\notin C^-$. For $v\in C^-$, the only first-source contributing to $p_2(v)$ are those with $h$-value $3m-n-2h(v)-1$ and $3m-n-2h(v)$, and the contribution of each admissible level is its layer size divided by $\l_{d-1}(h(v))$. Except on $O_d(n^{d-2})$ points near the boundaries of $C^-$ or of $V$, both $v$ and $v-s$ lie in the same region. If they lie outside $C^-$, then $p_2(v)=p_2(v-s)=0$. Suppose that both lie in $C^-$. Then, by the definition of $C^-$, we have $m+1 \le h(v), h(v-s) \le \left\lfloor {(3m-n-1)}/{2}\right\rfloor$, so $1 \le 3m-n-2h(v),\,3m-n-2h(v-s) \le m-n-2$ and hence they are in $A$, and the first candidate is inadmissible only when the second candidate equals $1$. Excluding these at most two $h$-levels removes only $O_d(n^{d-2})$ additional points. For every remaining point, both numerators contain two corresponding layer sizes. Since $h(v)-h(v-s)=h(s)$ and $|h(s)| \le 2$, the denominator indices differ by at most $2$, while the corresponding numerator indices differ by at most $4$. For every such vertex, write
\[
p_2(v)=\frac{N}{\l_{d-1}(h(v))}
\qquad\textrm{and}\qquad
p_2(v-s)=\frac{N'}{\l_{d-1}(h(v-s))}.
\]
By the first assertion of \cref{lem::sliceDifference}, we have
\[
|N-N'|
+
\bigl|
\l_{d-1}(h(v))-\l_{d-1}(h(v-s))
\bigr|
=
O_d(n^{d-3}).
\]
Also, since $v, v-s \in C$, both denominator indices lie in $[m+1, 2m-n-1] \subseteq [n,2m-n-1]$. Hence, the second assertion of \cref{lem::sliceDifference} gives $\l_{d-1}(h(v)),\,\l_{d-1}(h(v-s)) = \Theta_d(n^{d-2})$. Moreover, $N'=O_d(n^{d-2})$, as it is the sum of the sizes of two layers in $[0,n]^{d-1}$. Therefore, we have
\begin{align*}
\left|p_2(v)-p_2(v-s)\right| & =
\bigg|\Big(\frac{N}{\l_{d-1}(h(v))} - \frac{N'}{\l_{d-1}(h(v))}\Big) + \Big(\frac{N'}{\l_{d-1}(h(v))} -\frac{N'}{\l_{d-1}(h(v-s))}\Big) \bigg| \\
& \le \frac{|N-N'|}{\l_{d-1}(h(v))} +\frac{N'\bigl|\l_{d-1}(h(v))-\l_{d-1}(h(v-s))\bigr|}{\l_{d-1}(h(v))\l_{d-1}(h(v-s))}
=
O_d(n^{-1}).
\end{align*}
Summing over the $O_d(n^{d-1})$ points and using $0 \le p_2 \le 1$ on the excluded $O_d(n^{d-2})$ points gives
\[
\sum_{v\in V}|p_2(v-s)-p_2(v)|=O_d(n^{d-2}).
\]

For $p_3$, the idea is similar. Except on $O_d(n^{d-2})$ points near the boundaries of $C^+$ or of $V$, both $v$ and $v-r-s$ lie in the same region. If they lie outside $C^+$, then $p_3(v)=p_3(v-r-s)=0$. Suppose that both lie in $C^+$. By the proof of \cref{lem::w2Weights}, the only first-source levels contributing to $p_3(v)$ are $2h(v)-(3m-n)$ and $2h(v)-(3m-n)+1$, and the contribution of each admissible candidate is its layer size divided by $\l_{d-1}(h(v))$. The corresponding candidates for $p_3(v-r-s)$ are $2h(v-r-s)-(3m-n)$ and $2h(v-r-s)-(3m-n)+1$. After excluding the endpoint cases as above, corresponding candidates are admissible simultaneously. Their indices differ by at most $4$, while the denominator indices differ by at most $2$. The same calculation as for $p_2$ gives
\[
\sum_{v \in V}|p_3(v-r-s)-p_3(v)|=O_d(n^{d-2}).
\]

For \cref{equ::translatedWeightError}, for every $v \in C$, we have $(W'(v)-1)_+ \le |W'(v)-W(v)|$ because $W(v) \le 1$; for every $v \in V \sm C$, we have $|W'(v)-1| \le |W(v)-1|+|W'(v)-W(v)|$. Then, by \cref{lem::KLWeights,equ::translationVariation}, we have
\begin{align*}
 &\sum_{v \in C}(W'(v)-1)_+
 +\sum_{v \in V \sm C}|W'(v)-1| \\
 \le &
 \sum_{v \in C}|W'(v)-W(v)|
 +\sum_{v \in V \sm C}
 \bigl(|W(v)-1|+|W'(v)-W(v)|\bigr) \\
 = &
 \sum_{v \in V}|W'(v)-W(v)|
 +\sum_{v \in V \sm C}|W(v)-1|
 =
 O_d(n^{d-2}). \qedhere
\end{align*}
\end{proof}

\section{Proof of Ghosal's lemma}\label{app::GhosalLem}

\begin{proof}[Proof of \cref{lem::Ghosal}]
The proof follows the main idea from Ghosal's argument of considering whether a certain interval is contained entirely in $S_1+S_2$ or not. We slightly simplify the argument.

Recall that $\S_t \ce \{(S_1,S_2): S_1, S_2 \subseteq [0,t], \, 0 \in S_1 \cap S_2, \,t+1 \notin S_1+S_2\}$ and
\[
g(t) \ce
\sum_{(S_1,S_2)\in\S_t}
2^{-| (S_1+S_2) \cap [0,t] |},
\]
and our target is to prove that there are absolute constants $K>0$ and $\eta>0$ such that
\[
g(t) \le K(2-\eta)^t.
\]
We may assume that $K$ is sufficiently large and $\eta$ is sufficiently small, and also $t$ is sufficiently large. Let
\[
J_t \ce \left[\frac{3t}{4}+1,t\right].
\]
Let $\S_t^0 \ce \{(S_1,S_2) \in \S_t: J_t \subseteq S_1+S_2\}$ and $\S_t^k \ce \{(S_1,S_2) \in \S_t: t+1-k \notin S_1 +S_2\}$ for every $k \in [1,t/4]$. For every $i \in [0,t/4]$, define
\[
g_i(t) \ce
\sum_{(S_1,S_2) \in \S_t^i} 2^{-| (S_1+S_2) \cap [0,t] |}.
\]
Then,
\begin{equation} \label{equ::gtlesumg0tgkt}
    g(t) \le g_0(t) + \sum_{k: 1 \le k \le t/4} g_k(t).
\end{equation}
We will bound the values of $g_0(t)$ and $g_k(t)$ separately.

\begin{claim} \label{cla::g0tle}
There are absolute constants $K_0,\eta_0>0$ such that for every integer $t \ge 0$,
\[
g_0(t) \le K_0(2-\eta_0)^t.
\]
\end{claim}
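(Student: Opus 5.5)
The plan is to discard most of the additive structure of $S_1+S_2$ and keep only two facts. One is a crude lower bound on $|(S_1+S_2)\cap[0,t]|$. The other is the pairing constraint coming from $t+1\notin S_1+S_2$. The resulting sum factorizes over pairs of positions and can be evaluated exactly.

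\emph{Lower bound on the sumset.} Since $0\in S_1\cap S_2$, we have $S_1\cup S_2\subseteq S_1+S_2$. For $(S_1,S_2)\in\S_t^0$ we also have $J_t\subseteq S_1+S_2$. Hence
\[
|(S_1+S_2)\cap[0,t]| \ge \bigl|(S_1\cup S_2)\cup J_t\bigr|.
\]
So each position $p\in J_t$ costs a factor $1/2$ whatever its membership in $S_1$ and $S_2$. Each position $p\in[0,t]\sm J_t$ costs $1/2$ if $p\in S_1\cup S_2$ and $1$ otherwise.

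\emph{Pairing constraint and factorization.} We drop the conditions $0\in S_1\cap S_2$ and $J_t\subseteq S_1+S_2$; this only enlarges the sum. From $t+1\notin S_1+S_2$ we keep just the following: for every $p\in[1,t]$ with $q \ce t+1-p$, not both $p\in S_1$ and $q\in S_2$. The positions $[1,t]$ split into blocks $\{p,q\}$ with $p+q=t+1$, plus the singleton $\{0\}$ and possibly a fixed point $p=q$. These contribute only an $O(1)$ factor. Each block carries the four bits $[p\in S_1],[p\in S_2],[q\in S_1],[q\in S_2]$, subject to the two forbidden patterns $p\in S_1,\,q\in S_2$ and $q\in S_1,\,p\in S_2$. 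The weighted sum therefore factorizes into a product of block sums.

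\emph{Evaluating the block sums.} For a block with both $p,q\notin J_t$, so that both lie in $[t/4,3t/4]$ roughly, enumerate by the state of $p$:
\begin{itemize}
\item $p$ empty: contributes $1+3\cdot\tfrac12=\tfrac52$.
\item $p\in S_1$ only: forces $q\notin S_2$, contributing $\tfrac12(1+\tfrac12)=\tfrac34$.
\item $p\in S_2$ only: symmetrically contributes $\tfrac34$.
\item $p$ in both: forces $q$ empty, contributing $\tfrac12$.
\end{itemize}
The block sum is $\tfrac92$.

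For a block with $p\in J_t$ and $q\in[1,t/4)$, the factor $\tfrac12$ at $p$ is fixed. Enumerate by the state of $q$:
\begin{itemize}
\item $q$ empty: all $4$ states of $p$ are allowed, giving $4\cdot\tfrac12=2$.
\item $q\in S_1$ only: forces $p\notin S_2$, giving $2\cdot\tfrac12\cdot\tfrac12=\tfrac12$.
\item $q\in S_2$ only: symmetrically $\tfrac12$.
\item $q$ in both: forces $p$ empty, giving $\tfrac14$.
\end{itemize}
The block sum is $\tfrac{13}{4}$.

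There are about $t/4$ blocks of each kind. Hence
\[
g_0(t)\le O(1)\cdot\bigl(\tfrac92\cdot\tfrac{13}{4}\bigr)^{t/4}=O(1)\cdot 14.625^{t/4}.
\]
Since $\log_2 14.625<3.9$, this is at most $K_0\,2^{0.975t}\le K_0(2-\eta_0)^t$ for suitable absolute $K_0$ and $\eta_0>0$. Small $t$ are absorbed into $K_0$.

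\emph{Main obstacle.} The delicate point is that neither ingredient suffices alone. Middle blocks alone give $\sqrt{4.5}>2$ per position, so the pairing constraint is too weak there. The bound $|(S_1+S_2)\cap[0,t]|\ge t/4$, combined only with $|S_1|+|S_2|-1$, also fails to beat $2^t$. The argument works only because the interval $J_t$ is exactly the set reflected onto $[1,t/4)$, so its free factor $\tfrac12$ is spent inside the constrained blocks. Before finalizing, I would double-check this reflection bookkeeping, including the integer-rounding boundaries of $J_t$ and the boundary blocks, since those are the steps where an off-by-one error could break the exact block factorization.
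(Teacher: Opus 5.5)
Your proposal is correct and follows the paper's proof essentially verbatim. You use the same bound $|(S_1+S_2)\cap[0,t]|\ge|S_1\cup S_2\cup J_t|$, the same factorization over pairs $\{u,t+1-u\}$, and the same block sums $9/2$ and $13/4$, giving $O(1)\cdot(117/8)^{t/4}$. The only cosmetic difference is that the paper keeps the factor $\tfrac12$ from $0\in S_1\cap S_2$ and evaluates the odd-$t$ middle element's factor $2$ exactly, whereas you absorb both into the $O(1)$ constant.
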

\begin{proof}
For every $(S_1,S_2)\in\S_t^0$, since $0\in S_1 \cap S_2$, we have $S_1 \cup S_2 \subseteq S_1 + S_2$. Then, combining with $J_t \subseteq S_1+S_2$, we have
\[
S_1 \cup S_2 \cup J_t
 \subseteq
(S_1+S_2) \cap [0,t].
\]
Therefore,
\[
g_0(t)
=
\sum_{(S_1,S_2)\in\S_t^0}
2^{-| (S_1+S_2) \cap [0,t] |}
 \le
\sum_{(S_1,S_2)\in\S_t^0}
2^{-|S_1 \cup S_2 \cup J_t|}
 \le
\sum_{(S_1,S_2)\in\S_t}
2^{-|S_1 \cup S_2 \cup J_t|}.
\]

First, consider the case where $t$ is even. We partition $[t]$ into the pairs
\[
D_t = \Big\{
\{u,\,t+1-u\}:
1 \le u \le \frac{t}{2}
\Big\}
.
\]
Note that for every $\{u,v\} \in D_t$ and $(S_1,S_2) \in \S_t$, we have that if $u \in S_1$, then $v \notin S_2$ and if $u \in S_2$, then $v \notin S_1$. Also, the choices on distinct pairs in $D_t$ are independent. Therefore,
\[
\sum_{(S_1,S_2)\in\S_t}
2^{-|S_1 \cup S_2 \cup J_t|}
=
\frac{1}{2}
\prod_{\{u,v\}\in D_t}
\Bigg(
\sum_{\substack{T_1,T_2 \subseteq \{u,v\}\\
t+1\notin T_1+T_2}}
2^{-|T_1 \cup T_2 \cup (J_t \cap \{u,v\})|}
\Bigg),
\]
where the $1/2$ comes from the requirement $0 \in S_1 \cap S_2$. Now we compute the factor corresponding to a fixed pair $\{u,v\}\in D_t$, where $u<v$.

Suppose first that $v\notin J_t$. Among the admissible choices for the membership of $u$ and $v$ in the ordered pair $(S_1,S_2)$, there is one choice in which neither $u$ nor $v$ belongs to $S_1 \cup S_2$, six choices in which exactly one of them belongs to $S_1 \cup S_2$, and two choices in which both belong to $S_1 \cup S_2$. Hence, the total contribution from this pair is
\[
1+6\cdot\frac{1}{2}+2\cdot\frac{1}{4}=\frac{9}{2}.
\]

Now suppose that $v\in J_t$. Then, $v$ belongs to $S_1 \cup S_2 \cup J_t$ regardless of its membership in $S_1$ and $S_2$. If $u\notin S_1 \cup S_2$, then there are four choices for the membership of $v$, giving total contribution $4\cdot\frac{1}{2}$. If $u\in S_1\sm S_2$, then $v$ may belong to neither set or to $S_1$ only, giving total contribution $2\cdot\frac{1}{4}$. The case $u\in S_2\sm S_1$ is symmetric. Finally, if $u\in S_1 \cap S_2$, then there is only one case where $v$ must belong to neither set, giving contribution $1 \cdot 1/4$. Thus, the total contribution from this pair is
\[
 4 \cdot \frac{1}{2}
+2 \cdot \frac{1}{4}
+2 \cdot \frac{1}{4}
+1 \cdot \frac{1}{4}
=
\frac{13}{4}.
\]

Since $|J_t|=\left\lfloor\frac{t}{4}\right\rfloor$, exactly $\lfloor \frac{t}{4}\rfloor$ pairs contribute a factor $13/4$, while the remaining $\frac{t}{2}-\left\lfloor\frac{t}{4}\right\rfloor$ pairs contribute a factor $9/2$. Taking the product over all pairs in $D_t$, we obtain
\[
g_0(t)
 \le
\frac{1}{2} \cdot
\left(\frac{13}{4}\right)^{\lfloor t/4\rfloor}
\left(\frac{9}{2}\right)^{t/2-\lfloor t/4\rfloor}.
\]
When $t$ is odd, we apply the same argument to the $\lfloor t/2\rfloor$ pairs $\{u,t+1-u\}$ with $1 \le u<(t+1)/2$. The remaining element $(t+1)/2$ cannot belong to both $S_1$ and $S_2$, and its three possible states have total contribution
\[
1+\frac{1}{2}+\frac{1}{2}=2.
\]
This cancels the factor $1/2$ contributed by the element $0$. Hence, in both cases,
\[
g_0(t)
 \le
\left(\frac{13}{4}\right)^{\lfloor t/4\rfloor}
\left(\frac{9}{2}\right)^{
\lfloor t/2\rfloor-\lfloor t/4\rfloor}.
\]
Since $\left\lfloor\frac{t}{4}\right\rfloor \le \frac{t}{4}$ and $\left\lfloor\frac{t}{2}\right\rfloor - \left\lfloor\frac{t}{4}\right\rfloor \le \frac{t}{4}+1$, we obtain
\[
g_0(t)
 \le
\frac{9}{2}
\left(\frac{13}{4}\cdot\frac{9}{2}\right)^{t/4}
=
\frac{9}{2}
\left(\frac{117}{8}\right)^{t/4}.
\]
Note that $\left(\frac{117}{8}\right)^{1/4}<2$, since $117/8<16$. Letting $K_0$ be sufficiently large and $\eta_0$ be sufficiently small, we have
\[
g_0(t) \le K_0(2-\eta_0)^t. \qedhere
\]
\end{proof}

For $g_k(t)$, the general idea is similar to the proof of \cref{cla::g0tle}. We introduce the following notation for our discussion. For every integer $k\in[1,t/4]$, let $G_k(t)$ be the graph on vertex set $[t]$, where loops are allowed and $i$ is connected to $j$ if $i+j=t+1$ or $i+j=t+1-k$. For any graph $G$, possibly with loops, let $A \ce \{\es,\{1\},\{2\},\{1,2\}\}$. We say that a function $f:V(G)\to A$ is \emph{good} if, for every edge $uv\in E(G)$, we do not have both $1\in f(u)$ and $2\in f(v)$, and we do not have both $2\in f(u)$ and $1\in f(v)$. In particular, if $u$ has a loop, then $f(u)\ne\{1,2\}$. Let
\[
Z(G) \ce \sum_f2^{-|\{v\in V(G):f(v)\ne\es\}|},
\]
where the sum is over all good functions $f:V(G)\to A$. Note that for every $(S_1,S_2)\in\S_t^k$, if we define $f:[t]\to A$ by $f(i) \ce \{j\in\{1,2\}:i\in S_j\}$, the function $f$ is good on $G_k(t)$. Moreover, the ordered pair $(S_1,S_2)$ is uniquely determined by $f$, since $S_j=\{0\} \cup \{i\in[t]:j\in f(i)\}$ for $j\in\{1,2\}$. Thus, this map is an injection from $\S_t^k$ into the set of good functions on $G_k(t)$.

Since $0\in S_1 \cap S_2$, we have $S_1 \cup S_2 \subseteq (S_1+S_2) \cap [0,t]$, and $|S_1 \cup S_2|=1+|\{i\in[t]:f(i)\ne\es\}|$. Hence,
\begin{equation}\label{equ::gktleZGkt}
g_k(t) \le \frac{1}{2} Z(G_k(t)).
\end{equation}
We consider $Z(G_k(t))$.

\begin{claim}\label{cla::gammaDecomposition}
At most two vertices in $G_k(t)$ contain a loop. Every component containing no loop has at least eight vertices. After the loops of $G_k(t)$ are deleted, every component is a path.
\end{claim}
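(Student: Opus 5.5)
The plan is to view $G_k(t)$ as the union of two perfect-or-near-perfect matchings (involutions) on $[t]$: $\sigma(i)=t+1-i$, defined for all $i\in[t]$, and $\tau(i)=t+1-k-i$, defined only for $i\in[1,t-k]$. Every vertex has $\sigma$-degree one (counting a fixed point as a loop) and $\tau$-degree at most one. Hence after deleting loops every vertex has degree at most two, so each component is a path or a cycle. The first step is to handle loops. A loop at $i$ means $\sigma(i)=i$, i.e.\ $2i=t+1$, or $\tau(i)=i$, i.e.\ $2i=t+1-k$. Each equation has at most one solution, so there are at most two loops.

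The next step is to rule out cycles. The composition $\sigma\tau(i)=i+k$ is a translation, defined when $i\le t-k$. Walking along a component alternately by $\tau$ and $\sigma$ therefore increases or decreases the position by exactly $k$ every two steps. A closed walk would return to its starting point, which is impossible since we strictly move in one direction along the alternating sequence. More carefully, label the alternating walk $i,\tau(i),\sigma\tau(i),\dots$; the even-indexed entries form an arithmetic progression with difference $k\neq0$, so no vertex repeats. Hence each component is a path, possibly with a loop at an endpoint. The endpoint case arises because a loop vertex is a fixed point of one involution and so has only one non-loop neighbour. The vertices $i>t-k$, which have no $\tau$-edge, are the other possible endpoints.

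The remaining and main step is the lower bound of eight vertices for loopless components. A component is a maximal alternating path whose even-indexed vertices form a progression of step $k$ and whose odd-indexed vertices form another such progression. Both endpoints of a loopless path must lie in $[t-k+1,t]$ (no $\tau$-edge) or in $\sigma([t-k+1,t])=[1,k]$ (since $\sigma\tau$ translation stops), because a missing loop forces the path to end only where an involution is undefined. So the path runs from the bottom block $[1,k]$ up to the top block $[t-k+1,t]$, or between these blocks after reflection by $\sigma$. Each two steps advance by $k$, so the number of vertices is about $2t/k$. The constraint $k\le t/4$ then gives at least roughly $8$ vertices. I expect the delicate part to be the exact endpoint bookkeeping, checking the off-by-one cases where the path starts in $[1,k]$ versus at a $\sigma$-image, so that one gets at least eight rather than seven. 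I would verify this by tracking the path as $i, i+k, i+2k,\dots$ through the interleaved sequence. I would also check the endpoint conditions $i\le k$ and $i+jk>t-k$ directly with $4k\le t$.
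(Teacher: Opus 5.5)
Your treatment of the loops and of acyclicity is correct and matches the paper: the loop equations $2i=t+1$ and $2i=t+1-k$ each have at most one solution, and $\sigma\tau(i)=i+k$ means every two steps move the label by $k$ in a fixed direction. The gap is in the eight-vertex bound, which is the only substantive part of the claim. You only conclude that a loopless component has ``about $2t/k$'', hence ``roughly $8$'', vertices, and you explicitly leave the deciding off-by-one (eight versus seven) unverified.

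The endpoint description you would base that check on is also wrong. In a loopless component every vertex $i$ has a non-loop $\sigma$-edge, and it has a $\tau$-edge exactly when $i\le t-k$. Since $k\le t/4$, every $i\in[1,k]$ has the $\tau$-neighbour $t+1-k-i\ge t+1-2k\ge 1$. So vertices of $[1,k]$ have degree two and are never endpoints; the endpoints are exactly the component's vertices in $[t-k+1,t]$. If you start counting at some $a\in[1,k]$ as though it were an endpoint, you get only the seven vertices $a,\,t+1-k-a,\,a+k,\,t+1-2k-a,\,a+2k,\,t+1-3k-a,\,a+3k$. That is precisely the ``seven rather than eight'' problem you anticipate.

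The missing step, as in the paper, runs as follows. Let $a$ be the smallest vertex of a loopless component. If $a>k$, then $a\mapsto t+1-a\mapsto a-k$ is a walk in the component (the $\tau$-edge exists because $t+1-a\le t-k$), contradicting minimality; hence $a\le k$. Now the eight integers $t+1-a,\,a,\,t+1-k-a,\,a+k,\,t+1-2k-a,\,a+2k,\,t+1-3k-a,\,a+3k$ all lie in $[t]$, because $a+3k\le 4k\le t$ and $t+1-3k-a\ge t+1-4k\ge 1$. Consecutive terms sum alternately to $t+1$ and $t+1-k$, so they form a walk in the loopless graph. The walk never backtracks, since two consecutive steps change the label by $\pm k\neq 0$, so acyclicity makes the eight vertices distinct. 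The eighth vertex is the $\sigma$-neighbour $t+1-a$ of $a$, which lies in $[t-k+1,t]$ and is the true endpoint your description omits.
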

\begin{proof}
A loop can occur at a vertex $i$ only when $2i=t+1$ or $2i=t+1-k$, so at most two vertices of $G_k(t)$ have a loop.

Let $H$ be the graph obtained from $G_k(t)$ by removing the loops. By definition, every vertex in $H$ has degree at most $2$, so every component is either a cycle or a path. Note that every path of length $2$ in $H$ must have one of the forms
\[
i\mapsto t+1-i\mapsto i-k
\qquad\textrm{or}\qquad
i\mapsto t+1-k-i\mapsto i+k.
\]
Hence, every two steps change the label exactly by $k$ in one fixed direction. In particular, there is no cycle in $H$, and hence every component is a path.

It remains to consider a component containing no loop in $G_k(t)$, which remains the same in $H$. Let $a$ be the smallest element in this component. We first have $a \le k$, since otherwise $a\mapsto t+1-a\mapsto a-k$ is a path in $H$ and $a - k$ is in the same component. Then, this component contains vertices
\[
t+1-a, \ a,\ t+1-k-a,\ a+k,\ t+1-2k-a,\ a+2k,\ t+1-3k-a,\ a+3k.
\]
Indeed, $a+3k \le 4k \le t$ and $t+1-3k-a \ge t+1-4k \ge 1$, so all eight displayed integers belong to $[t]$. Consecutive terms have sum alternately equal to $t+1$ and $t+1-k$, and hence form a path in $H$. Since $H$ contains no cycle, these eight vertices are distinct. Therefore, the component has at least eight vertices.
\end{proof}

Let $P_i$ be the path with $i$ vertices.

\begin{claim}\label{cla::ZvaluePathge8}
There is an absolute constant $\gamma<2$ such that for every integer $i \ge 8$,
\[
Z(P_i) \le \gamma^i.
\]
\end{claim}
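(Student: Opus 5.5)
We compute $Z(P_i)$ by a transfer matrix and show its growth rate is strictly below $2$. For a path, $Z(P_i)=\mathbf 1^{T}M^{i-1}w$, where the states are the four values $\es,\{1\},\{2\},\{1,2\}$ of $f$ at a vertex. The vertex weight is $w(\es)=1$ and $w(\sigma)=1/2$ for $\sigma\ne\es$. The compatibility relation between consecutive vertices forbids $1\in f(u),2\in f(v)$ and $2\in f(u),1\in f(v)$.

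Concretely, writing the states in the order $\es,\{1\},\{2\},\{1,2\}$, the compatibility matrix is
\[
\begin{pmatrix}1&1&1&1\\1&1&0&0\\1&0&1&0\\1&0&0&0\end{pmatrix}.
\]
Indeed, $\{1\}$ is compatible only with $\es$ and $\{1\}$, and $\{1,2\}$ only with $\es$. We symmetrize using $D=\operatorname{diag}(1,2^{-1/2},2^{-1/2},2^{-1/2})$ and set $N\ce D\cdot(\text{compatibility})\cdot D$. Then $Z(P_i)=u^TN^{i-1}u$ with $u=(1,2^{-1/2},2^{-1/2},2^{-1/2})^T$, so $Z(P_i)\le \|u\|^2\rho(N)^{i-1}=\tfrac52\rho(N)^{i-1}$.

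The key step is to compute $\rho(N)$. The entries of $N$ are: $1$ at $(\es,\es)$; $2^{-1/2}$ between $\es$ and each nonempty state; $1/2$ on the diagonal at $\{1\}$ and $\{2\}$; and $0$ elsewhere. By the $\{1\}\leftrightarrow\{2\}$ symmetry, the Perron vector has the form $(p,q,q,r)$. This gives the equations $\lambda p=p+\sqrt2 q+r/\sqrt2$, $\lambda q=p/\sqrt2+q/2$, and $\lambda r=p/\sqrt2$. Substituting $q=p/(\sqrt2(\lambda-1/2))$ and $r=p/(\sqrt2\lambda)$ yields
\[
\lambda-1=\frac{1}{\lambda-1/2}+\frac{1}{2\lambda}.
\]
At $\lambda=2$ the left side is $1$ and the right side is $2/3+1/4=11/12<1$. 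Since the left side increases and the right side decreases for $\lambda>1$, the root satisfies $\rho\ce\rho(N)<2$. The antisymmetric eigenvector $(0,1,-1,0)$ has eigenvalue $1/2$, and the remaining eigenvalues are controlled by Perron--Frobenius, since $N$ is nonnegative and irreducible.

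Finally, choose $\gamma\in(\rho,2)$ with $\tfrac52\rho^{i-1}\le\gamma^i$ for all $i\ge8$. This is possible by taking $\gamma$ close enough to $2$, for example requiring $(\gamma/\rho)^7\cdot\gamma\ge5/2$; if needed, a numerical estimate such as $\rho\approx1.9$ makes this immediate. The main obstacle is just pinning $\rho$ safely below $2$ and absorbing the constant $5/2$ using $i\ge8$, which is exactly why the claim assumes long components.
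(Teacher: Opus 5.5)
Your transfer-matrix setup is correct. The compatibility matrix, the symmetrized $N$, the identity $Z(P_i)=u^TN^{i-1}u$ with $\|u\|^2=5/2$, and the Perron equation $\lambda-1=\frac{1}{\lambda-1/2}+\frac{1}{2\lambda}$ (equivalently $4\lambda^3-6\lambda^2-4\lambda+1=0$) all check out, and they do give $\rho<2$.

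\textbf{The gap: the final absorption step is false as stated.} The Perron root is not about $1.9$. At $\lambda=1.9$ the left side is $0.9$ while the right side is about $0.977$, so $\rho>1.9$; in fact $\rho\approx1.9476$, consistent with $Z(P_9)/Z(P_8)$. Then $\frac52\rho^7\approx265.7>2^8$ and $\frac52\rho^8\approx517>2^9$. So $\frac52\rho^{i-1}\le\gamma^i$ fails at $i=8$ and $i=9$ for every $\gamma\le2$. In particular, your sufficient condition $(\gamma/\rho)^7\gamma\ge5/2$ cannot hold for $\gamma<2$, because $2^8/\rho^7\approx2.41$.

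There is no slack to exploit. The threshold $8$ is sharp, since $Z(P_7)=8233/64>2^7$, and $Z(P_8)=32065/128$ is only about 2\% below $2^8$. The bound $u^TN^{i-1}u\le\|u\|^2\rho^{i-1}$ replaces the true leading coefficient by $5/2$. That coefficient is the squared projection of $u$ onto the unit Perron vector, about $2.36$. The resulting loss is exactly enough to break $i=8$ and $i=9$.

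\textbf{How to repair it.} Treat $i=8,9$ by exact computation: $Z(P_8)=32065/128<2^8$ and $Z(P_9)=124901/256<2^9$. These follow from your matrix, or from the paper's recurrence $Z_i=Z_{i-1}+Y_{i-1}+\frac12Z_{i-2}$, $Y_i=Z_{i-1}+\frac12Y_{i-1}$. Use the spectral bound only for $i\ge10$.

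There you need a rigorous upper bound on $\rho$, for instance $\rho<1.95$: at $\lambda=1.95$ the left side $0.95$ exceeds the right side $\approx0.946$. This gives $\frac52(1.95)^9\approx1019<2^{10}$. Hence some $\gamma_0\in(1.95,2)$ satisfies $\frac52\rho^{i-1}\le\gamma_0^i$ for all $i\ge10$, and $\gamma\ce\max\{Z(P_8)^{1/8},Z(P_9)^{1/9},\gamma_0\}$ works.

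\textbf{Comparison with the paper.} The paper proves $Z(P_i)<2^i$ for $i\ge8$ by induction from the exact values $Z_8,Z_9,Y_9$. It then obtains a uniform $\gamma$ from submultiplicativity $Z(P_{i+j})\le Z(P_i)Z(P_j)$ over the window $8\le j\le15$, avoiding eigenvalues altogether. Once repaired, your route has the advantage of identifying the actual growth rate $\rho\approx1.9476$. It still needs the small cases computed by hand, which is essentially the same finite check the paper performs.
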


\begin{proof}
Write $Z_i \ce Z(P_i)$. For every $i \ge 1$, let $Y_i$ be the contribution to $Z_i$ from the good functions whose value at one fixed endpoint of $P_i$ belongs to $\{\es,\{1\}\}$. By symmetry, the corresponding contribution with endpoint value in $\{\es,\{2\}\}$ is also $Y_i$.

We have $Z_0=1$, $Z_1=1+ 3 \cdot 1/2 = 5/2$, and $Y_1=1+1/2 =3/2$. Conditioning on the value at the final vertex gives, for every $i \ge 2$,
\begin{equation}\label{equ::ZiYiRecurrence}
Z_i=Z_{i-1}+Y_{i-1}+\frac{1}{2}Z_{i-2}
\qquad\textrm{and}\qquad
Y_i=Z_{i-1}+\frac{1}{2}Y_{i-1}.
\end{equation}
Indeed, if the value at the final vertex is $\es$, the contribution is $Z_{i-1}$. If it is $\{1\}$ or $\{2\}$, these two cases together contribute $Y_{i-1}$. If it is $\{1,2\}$, the value at the preceding vertex must be $\es$, and the contribution is $\frac{1}{2}Z_{i-2}$. The recurrence for $Y_i$ follows similarly by considering the final values $\es$ and $\{1\}$.

Iterating \cref{equ::ZiYiRecurrence} gives
\[
Z_8=\frac{32065}{128}<2^8,\qquad
Z_9=\frac{124901}{256}<2^9,\qquad
Y_9=\frac{172565}{512}<\frac{2}{3}\,2^9.
\]
We prove simultaneously that $Z_i<2^i$ for every $i \ge 8$ and $Y_i<\frac{2}{3}\,2^i$ for every $i \ge 9$. For $i \ge 10$, the induction hypotheses and \cref{equ::ZiYiRecurrence} give
\begin{align*}
Z_i
&<
2^{i-1}+\frac{2}{3}\,2^{i-1}+\frac{1}{2}\,2^{i-2}
=\frac{23}{24}\,2^i
<2^i,\\
Y_i
&<
2^{i-1}+\frac{1}{2}\cdot\frac{2}{3}\,2^{i-1}
=\frac{2}{3}\,2^i.
\end{align*}

Note that for integers $i,j \ge 1$, we have
\begin{equation}\label{equ::ZiSubmultiplicative}
Z_{i+j} \le Z_iZ_j.
\end{equation}
Indeed, deleting the edge between the $i$-th and $(i+1)$-st vertices of $P_{i+j}$ gives the disjoint union of $P_i$ and $P_j$, and deleting any edge can only increase the weighted sum by the definition of $Z(G)$ and good functions.

Let $\gamma \ce \max_{8 \le j \le 15}Z_j^{1/j}$. Since $Z_j<2^j$ for every $j \ge 8$, we have $\gamma<2$. Every integer $i \ge 8$ is a sum of integers from $[8,15]$. Applying \cref{equ::ZiSubmultiplicative} repeatedly gives $Z_i \le \gamma^i$.
\end{proof}

\begin{claim}\label{cla::gktle}
There are absolute constants $K_1,\eta_1>0$ such that for all integers $k,t$ with $t \ge 0$ and $1 \le k \le t/4$,
\[
g_k(t) \le K_1(2-\eta_1)^t.
\]
\end{claim}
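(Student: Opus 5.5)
Starting from \cref{equ::gktleZGkt}, it suffices to bound $Z(G_k(t))$ by $K(2-\eta)^t$ uniformly in $k\in[1,t/4]$. Two easy facts do most of the work. First, $Z$ is multiplicative over connected components, since goodness only constrains pairs joined by an edge. Second, deleting an edge or a loop can only increase $Z$, because every good function stays good. So I delete all loops of $G_k(t)$; by \cref{cla::gammaDecomposition} what remains is a disjoint union of paths, and
\[
Z(G_k(t)) \le \prod_{P} Z(P),
\]
where the product runs over the path components of the loopless graph.

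By \cref{cla::gammaDecomposition}, at most two vertices carry loops, so at most two components of $G_k(t)$ contain loops. Every other component is a path with at least eight vertices, and \cref{cla::ZvaluePathge8} gives $Z(P)\le\gamma^{|P|}$ for each of them. The at most two components that carried loops may be short paths. For these I use the trivial bound $Z(P_i)\le Z_1^i=(5/2)^i$, which follows from the submultiplicativity \cref{equ::ZiSubmultiplicative}. This is too weak if such a path is long, so for those paths I take the following route instead.

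If a path with a loop has at least eight vertices, then \cref{cla::ZvaluePathge8} still applies after the loop is deleted, and the bound $\gamma^{|P|}$ holds. If it has fewer than eight vertices, its contribution is at most $(5/2)^7$, a constant. Hence
\[
Z(G_k(t)) \le (5/2)^{14}\,\gamma^{t},
\]
and with $\gamma<2$ from \cref{cla::ZvaluePathge8} we may take $2-\eta_1=\gamma$ and $K_1=\tfrac12(5/2)^{14}$.

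The one point needing care is the claim that short components can only be those containing loops. \cref{cla::gammaDecomposition} asserts exactly that every loopless component has at least eight vertices. For a component that contains a loop, its path structure after loop deletion is still a path, so the case split above covers it. The resulting constants are absolute and do not depend on $k$, which gives the required uniformity. The main obstacle is precisely this bookkeeping of the (at most two) loop components; it is handled by absorbing them into the constant $K_1$.
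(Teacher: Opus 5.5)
Your proposal is correct and follows the paper's proof essentially step for step. You factor $Z$ over components, delete loops, bound every component with at least eight vertices by $\gamma^{|V|}$, and absorb the at most two short loop components into the constant $(5/2)^{14}$, arriving at the same $K_1=\tfrac12(5/2)^{14}$ and $2-\eta_1=\gamma$. The only cosmetic difference is that you bound the short paths via submultiplicativity, $Z(P_i)\le Z_1^i=(5/2)^i$, whereas the paper simply discards all constraints on those vertices; both give the same factor $5/2$ per vertex.
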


\begin{proof}
By definition, $Z(G_k(t))$ factors over the connected components of $G_k(t)$. Let $H$ be one such component. If $H$ contains no loop, then \cref{cla::gammaDecomposition,cla::ZvaluePathge8} give $Z(H) \le \gamma^{|V(H)|}$.

Suppose that $H$ contains a loop. Deleting all loops from $H$ can only increase $Z(H)$, and the resulting graph is a path. Therefore, if $|V(H)| \ge 8$, we again have $Z(H) \le \gamma^{|V(H)|}$.

By \cref{cla::gammaDecomposition}, at most two vertices have a loop, and hence at most two components contain a loop. Those having fewer than eight vertices contain at most fourteen vertices altogether. If all constraints on these vertices are discarded, each vertex has total contribution $1+3\cdot\frac{1}{2}=\frac{5}{2}$. Hence, the total contribution of all such exceptional components is at most $(5/2)^{14}$. Therefore,
\[
Z(G_k(t)) \le
\left(\frac{5}{2}\right)^{14}\gamma^t.
\]
Together with \cref{equ::gktleZGkt}, this gives
\[
g_k(t) \le
\frac{1}{2}\left(\frac{5}{2}\right)^{14}\gamma^t.
\]
Taking $K_1 \ce \frac{1}{2}(5/2)^{14}$ and $\eta_1 \ce 2-\gamma>0$ proves the claim.
\end{proof}

Note that $|J_t| \le t/4$, which is polynomial in $t$. \cref{lem::Ghosal} follows from \cref{equ::gtlesumg0tgkt} and \cref{cla::g0tle,cla::gktle} immediately.
\end{proof}

\end{document}